\documentclass[a4paper,10pt]{article}

\usepackage{amsfonts, epsfig, amsmath, amssymb, color, mathabx,amsthm}
\usepackage{mathrsfs}
\usepackage{cite}
\textwidth 16.5cm \textheight 23cm \oddsidemargin 0mm
\evensidemargin -4.5mm \topmargin -10mm
\parindent 0.5cm

\renewcommand{\i}{\mathrm{i}}
\newcommand{\ex}{\mathrm{e}}
\newcommand{\di}{\mathrm{d}}

\newcommand{\cL}{\mathcal{L}}

\newcommand{\bR}{\mathbb{R}}

\usepackage{empheq}
%
%
%
%

\numberwithin{equation}{section}






%

\newtheorem{thm}{Theorem}[section]
\newtheorem{cor}[thm]{Corollary}
\newtheorem{lem}[thm]{Lemma}
\newtheorem{prp}[thm]{Proposition}

\theoremstyle{definition}

\newtheorem{exa}[thm]{Example}

\newcommand{\e}{\varepsilon}

\newcommand{\be}{\begin{equation}}
\newcommand{\ee}{\end{equation}}
\newcommand{\ben}{\begin{equation*}}
\newcommand{\een}{\end{equation*}}

\newcommand{\ba}{\begin{equation}\begin{aligned}}
\newcommand{\ea}{\end{aligned}\end{equation}}

\begin{document}

\title{Spectral Analysis of a Discrete Metastable System \\ Driven by L\'{e}vy Flights}
\author{Toralf Burghoff\footnote{Institute of Medical Biometry and Statistics, University of L\"ubeck,
Ratzeburger Allee 160, Geb.\ 24, 23562 L\"ubeck Germany; 
toralf.burghoff@imbs.uni-luebeck.de} \ \
and \
Ilya Pavlyukevich\footnote{Institute for Mathematics, 
Friedrich Schiller University Jena, Ernst--Abbe--Platz 2, 
07743 Jena, Germany; ilya.pavlyukevich@uni-jena.de}}
\date{\null}
\maketitle
\abstract
\noindent
In this paper we consider a finite state time discrete Markov chain that mimics the behaviour of solutions of the
stochastic differential equation
\begin{equation*}
X_t^\varepsilon(x)=x-\int_0^t U^\prime(X_s^\varepsilon)\, \di s+\varepsilon L_t,
\end{equation*}
where $U$ is a multi-well potential with $n\geq 2$ local minima and $L=(L_t)_{t\geq 0}$ is a symmetric $\alpha$-stable
L\'{e}vy process (L\'evy flights process). We investigate the spectrum of the 
generator of this Markov chain in the limit $\varepsilon\rightarrow 0$ and localize the top $n$ eigenvalues 
$\lambda^\varepsilon_1,\ldots,\lambda^\varepsilon_n$. These eigenvalues
turn out to be of the same algebraic order 
$\mathcal O(\varepsilon^\alpha)$
and are well separated 
from the rest of the spectrum by a spectral gap. We also determine the limits 
$\lim_{\varepsilon\to 0}\varepsilon^{-\alpha} \lambda^\varepsilon_i$, $1\leq i\leq n$, 
and show that the corresponding eigenvectors are
approximately
constant over the domains which correspond to the potential wells of $U$.

\medskip

\noindent
\textbf{Keywords:} metastability; L\'evy flights; $\alpha$-stable L\'evy process; eigenvalues; spectral gap; Markov chain; transition times;
fractional Laplacian; semiclassical limit.
\medskip

\noindent
\textbf{AMS Subject Classification:} 60J10$^*$, 60J75, 47A75, 47G20, 15B51, 15A18

\section{Introduction. Spectral properties of Gaussian small noise diffusions}

Let $U\in C^3(\bR,\bR)$ be a one-dimensional multi-well potential (see, e.g.\ Fig.~\ref{f:potential}) such that 
$|U'(x)|>c|x|^{1+c}$ as $x\rightarrow \pm \infty$ for some $c>0$.
Assume that
$U$ has exactly $n$ local minima $\mathfrak{m}_i$, 
$1\leq i\leq n,$ as well as $n-1$ local maxima $\mathfrak{s}_i$, $1\leq i\leq n-1$, enumerated in
increasing order
\begin{align}
-\infty=\mathfrak{s}_0<\mathfrak{m}_1<\mathfrak{s}_1<\mathfrak{m}_2<\cdots<\mathfrak{m}_n<\mathfrak{s}_n=\infty.
\end{align}
Moreover, we suppose that all these extrema are non-degenerate, i.e.\ 
\begin{align}
U''(\mathfrak{m}_i)>0,\ 1\leq i\leq n,\quad \text{and}\quad U''(\mathfrak{s}_i)<0,\ 1\leq i\leq n-1,
\end{align}
and denote
\begin{align}
\mathcal{M}\coloneqq \{\mathfrak{m}_1,\ldots,\mathfrak{m}_n \}.
\end{align}
The points $\mathfrak{m}_i$, $i=1,\dots,n$, are exponentially stable attractors of a deterministic dynamical system generated by 
the ordinary differential equation $\dot x_t=-U'(x_t)$. For any initial position 
$x\in (\mathfrak{s}_{i-1},\mathfrak{s}_i)$, the solution $x_t(x)$ does not leave the domain of attraction 
$(\mathfrak{s}_{i-1},\mathfrak{s}_i)$ and $x_t(x)\to\mathfrak{m}_i$ as $t\to\infty$. 

Although deterministic gradient systems with multiple attractors appear naturally in various application problems 
(e.g.\ climate modelling \cite{BenziPSV-81,Ditlevsen-99a,Nicolis-82}, 
physics \cite{Kramers-40}, 
robotics \cite{byl2009metastable}, economics \cite{Tu-94}),
their evident drawback often consists in an impossibility of transitions between the domains of attraction. 
To overcome this limitation, a small noise is often added to the model. 
Intuitively the perturbed random system stays in the vicinity of one of the attractors $\mathfrak{m}_i$ most of the time, making
sporadic transitions to the other domains of attraction.
This type of behaviour is often referred to as \emph{metastability}.
The essential characteristics of the small noise system as a metastable hopping process are
the life times in the domains of attraction, transition probabilities between the wells, the most probable state etc.
Naturally, these characteristics depend heavily on the type of the noisy perturbation. 
The case of Brownian perturbations is the most well studied.

For a standard Brownian motion $W=(W_t)_{t\geq 0}$, consider the SDE
\begin{align}
\label{SDEW} 
 X_t^\varepsilon(x)=x-\int_0^t U^\prime(X_s^\varepsilon)\,\di s+\varepsilon W_t.
\end{align}
Under the conditions on $U$ formulated above, for any $x\in\bR$, \eqref{SDEW} has a unique 
strong solution $X^\e=(X^\e_t)_{t\geq 0}$ (see e.g.\ \cite{Cerrai-01}).
Since the classical results by   
Kramers \cite{Kramers-40}, it is known that the mean life times of $X^\e$ in the potential wells are exponentially 
long with respect to the parameter
$\e$, more precisely of the order 
$\min\{ \ex^{2(U(\mathfrak{s}_{i- 1})-U(\mathfrak{m}_i))/\e^2 }, \ex^{2(U(\mathfrak{s}_{i+1})-U(\mathfrak{m}_i))/\e^2 }$\} 
for the $i$-th well.
Considering a generic perturbed gradient system on
exponentially long time scales of the order $T(\e)=\ex^{\mu/\e^2}$, $\mu>0$, one can recover different behaviours.
In particular on the slowly growing (short) scales, 
when $\ex^{\mu/\e^2}$ is less than the life times in all the wells ($\mu<2 (U(\mathfrak{s}_{i\pm 1})-U(\mathfrak{m}_i) )$, $i=1,\dots n$),
the trajectory $(X^\e_{T(\e)\cdot t})_{t\in[0,1]}$ does not succeed in leaving any of the domains of attraction. 
On longer time scales, the trajectory may leave
some of the domains of attraction thus making transitions and eventually forming a sublimit distribution. On the longest scale, 
$X^\e$ converges to its stationary law which is concentrated in the vicinity of the potential's global minimum. 
For a generic potential, there are $n-1$ such critical values which are different and can be ordered in decreasing order
$\mu_2>\mu_3>\dots>\mu_{n}$. These values determine the time scales $T(\e)=\ex^{\mu_i/\e^2}$ which
distinguish the sublimit distributions.
The values $\mu_i$, $i=2,\dots, n$, can be calculated probabilistically in terms of the heights of 
the potential barriers of $U$ with the help of
the large deviations theory by
Freidlin and Wentzell (see \cite[\S VI.6]{FreidlinW-98} or a recent exposition \cite{Ca13}).

Recall that $X^\e$ is a strong Markov processes with the infinitesimal generator
\begin{equation}
\cL_\e f(x)=\frac{\e^2}{2}f''(x)-U'(x)f'(x),\quad f\in C^\infty_0(\bR,\bR).
\end{equation}
The existence of multiple sublimit distributions is reflected in the spectral properties of the operator $\mathcal L_\e$.
The generator $\cL_\e$ is a negative definite essentially self-adjoint operator in the weighted space 
$L^2(\bR,\ex^{-2U(x)/\e^2}\,\di x)$. It 
has a discrete non-positive spectrum $0\geq-\lambda_1^\e\geq -\lambda_2^\e\geq \cdots$ 
and a corresponding sequence of eigenfunctions $\{\Phi^\e_i\}_{i\geq 1}$ 
which form an orthonormal basis (see \cite[Chapter 3]{HIPP2014} for the detailed exposition).
Whereas its top eigenvalue is identically zero, $\lambda^1_\e\equiv 0$, with the corresponding eigenfunction being a constant, 
it turns out that the next $n-1$ eigenvalues are of the order $\ex^{-\mu_i/\e^2}$
where the rates $\mu_i$ are obtained above.
The principle non-zero eigenvalue $\lambda^2_\e$ corresponding to the longest time scale $\ex^{\mu_2/\e^2}$ determines the
speed of convergence of the law of $X_t^\e$
to the stationary distribution as $t\to\infty$; further eigenvalues recover the speeds of convergence to sublimit distributions.
With the help of the Fourier method of separation of variables, the functional $u_\e(t,x)= \mathbb E_x f(X^\e_t)$ 
which is the solution of the Cauchy problem $\partial_t u=\cL_\e u$, $u_\e(0,x)=f(x)$, 
can be represented as 
\begin{equation}
u_\e(t,x)= 
\frac{\displaystyle\int_\bR f(y) \ex^{-2U(y)/\e^2}\,\di y}{\displaystyle\int_\bR \ex^{-2U(y)/\e^2}\,\di y   }    +  
\sum_{i=2}^{n} \Big(
\int_\bR f(y) \Phi_i^\e(y)\ex^{-2U(y)/\e^2}\,\di y\Big)\cdot \Phi_i^\e(x)\ex^{-\lambda_i^\e t} + R_\e(t,x).
\end{equation}
The remainder term $R_\e$ can be shown to be negligible compared to the leading $n$ terms of the expansion due to the 
spectral gap property, namely, that there is a constant $M$ not depending on $\e$ such that $\lambda^\e_{n+1}\geq M$ for all $\e\in(0,1]$.
Moreover, the eigenfunctions $\Phi_i^\e$, $2\leq i\leq n$, are almost constant over the potential wells. 

A rich literature is devoted to the analysis of metastability of the system of the type \eqref{SDEW} 
on different levels of rigour. A probabilistic characterization
of the principle eigenvalue of $\mathcal L_\e$ in a bounded domain with the Dirichlet boundary conditions
was obtained by Khasminski \cite{Khasm-59} (see also Friedman \cite[Lemma 1.1]{Friedman-73}).
Schuss \textit{et al.} \cite{MatkowskyS-79,MatkowskyS-81,Schuss-10} determined
the shape of the eigenfunction $\Phi^\e_2$ and the 
asymptotics of the eigenvalue $\lambda^\e_2$, especially its subexponential prefactor in terms of the values $U''(\mathfrak{s}_i)$ and 
$U''(\mathfrak{m}_i)$ with the help of formal asymptotic expansions
of solutions of second order ordinary differential equations.  
Buslov, Makarov and Kolokoltsov \cite{Makarov-85,BuslovM-88,BuslovM-92,KolokoltsovM-96,Kolokoltsov-00} 
established a connection between the top spectrum of the diffusion's generator and the 
spectrum of the matrix of the inverse mean life times of $X^\e$ in the potential wells, found very exact approximations for the 
corresponding eigenfunctions and proved the existence of an $\e$-independent spectral gap. 
Bovier, Eckhoff, Gayrard and Klein in \cite{Eckhoff00,BovEckGayKle02,BovierEGK-04,BovierGK-05} developed a 
potential theoretic approach to metastability of Markov chains and gradient diffusions and obtained very exact asymptotics
of the top eigenvalues.
Metastable behaviour of diffusions in a double-well potential was studied 
by probabilistic methods in \cite{KipnisN-85,GalvesOV-87,OlivieriV-03}.
Applications of the spectral theory to simulated annealing can be found in 
\cite{ChiangHS-87,HolleyKS-89,HwangS-90}, to stochastic resonance in \cite{HIPP2014}, and to analysis of 
molecular dynamics \cite{schutte2013metastability,MetSchVdE-09}.
Berglund and Gentz \cite{BerGen-10} studied the behaviour of small noise diffusions in potentials with non-quadratic extrema.
We refer the reader to a recent review \cite{Berglund-13} by Berglund for 
further references on the subject.

In recent time, equations driven by non-Gaussian noise, especially $\alpha$-stable L\'evy processes (L\'evy flights), 
are being adopted for description and modelling of various real world phenomena (see e.g.\ \cite{MetCheKla12}).
Let $L=(L_t)_{t\geq 0}$ be a symmetric $\alpha$-stable L\'evy process with the characteristic function 
\begin{equation}
\label{eq:chf}
\mathbb E \ex^{\i u L_t}=\ex^{-c(\alpha) t |u|^\alpha},\ u\in\bR, \ \alpha\in(0,2),
\end{equation}
where $c(\alpha)=\alpha \int_0^\infty \frac{1-\cos y}{y^{1+\alpha}}\,\di y\in(0,\infty)$.
Such a process has heavy tails and for convenience the constant $c(\alpha)$ is chosen to guarantee the following tail asymptotics:
\begin{equation}
\lim_{u\to\infty }u^\alpha\mathbb P(|L_1|\geq u)=1.
\end{equation} 
For $\e>0$, the SDE 
\begin{align}\label{SDE} 
 X_t^\varepsilon(x)=x-\int_0^t U^\prime(X_s^\varepsilon)\,\di s+\varepsilon L_t
\end{align}
possesses a unique strong solution $X^\e=(X^\e_t)_{t\geq 0}$ which is a strong Markov processes (see \cite{Applebaum-09,ImkellerP-08}).

In the limit $\e\to 0$, the process $X^\e$ enjoys the metastable behaviour in the following sense.
\begin{thm}[Theorem 1.1, \cite{ImkellerP-08}]
\label{th:1}
Let $k=1,\dots,n$ and
$x\in (\mathfrak{s}_{k-1},\mathfrak{s}_{k-1})$.
Then the process $(X^\varepsilon_{t/\varepsilon^\alpha})_{t> 0}$, $X^\e_0=x$, converges, 
as $\varepsilon\rightarrow 0$, in the sense of finite dimensional distributions to a Markov chain
$Y=(Y_t)_{t> 0}$, $Y_0=\mathfrak{m}_k$, on the state space
$\mathcal{M}$ with the stable conservative generator $\mathbf{Q}=(q_{i,j})_{i,j=1}^n$ given by
\begin{align}\label{eintraegeQ}
   q_{i,j}=\begin{cases}\displaystyle
         \frac{1}{2}\Big|\frac{1}{|\mathfrak{s}_{j-1}-\mathfrak{m}_i|^\alpha}-\frac{1}{|\mathfrak{s}_{j}-\mathfrak{m}_i|^\alpha}\Big|, &i\neq j, \\
  \displaystyle\vphantom{\Bigg|}
  - \frac{1}{2} \Big(\frac{1}{|\mathfrak{s}_{i-1}-\mathfrak{m}_i|^\alpha}+\frac{1}{|\mathfrak{s}_{i}-\mathfrak{m}_i|^\alpha}\Big), & i=j.
           \end{cases}
\end{align}
\hfill $\blacksquare$
\end{thm}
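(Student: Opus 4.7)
\textbf{Proposal for the proof of Theorem \ref{th:1}.} The plan is to exploit the fact that an $\alpha$-stable L\'evy process moves by jumps whose sizes follow a heavy power-law tail, so that transitions between wells are driven entirely by a few \emph{large} jumps, while between these jumps the dynamics reduces to the deterministic gradient flow $\dot x_t=-U'(x_t)$. Concretely, fix a small $\rho\in(0,1)$ and write the Lévy--It\^o decomposition
\be
L_t = \xi_t^\e + \eta_t^\e,
\ee
where $\eta^\e$ collects the jumps of absolute size larger than $\e^{-\rho}$ and $\xi^\e$ is the compound Poisson of small jumps plus compensator. The process $\eta^\e$ is a compound Poisson process with intensity $\beta_\e \asymp \e^{\alpha\rho}\to 0$ and, after the time rescaling $t\mapsto t/\e^\alpha$, the first large jump occurs at a time which remains tight and non-degenerate. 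Between consecutive large jumps, $\e \xi^\e$ is uniformly small on each compact time interval (e.g.\ by a Doob-type inequality), so the solution $X^\e$ of \eqref{SDE} stays uniformly close to the deterministic flow launched from the current position.

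The next step is to control the \emph{relaxation phase}. For any well $(\mathfrak{s}_{i-1},\mathfrak{s}_i)$, starting from a point at distance at least some fixed $\delta>0$ from the saddles, the deterministic trajectory reaches a neighbourhood of $\mathfrak{m}_i$ in time $\mathcal O(\log(1/\delta))$, which is negligible with respect to the exponentially distributed waiting time $\mathcal O(1/\beta_\e)$. Hence on the accelerated time scale $t/\e^\alpha$, with probability tending to $1$, the process is found in a small neighbourhood of $\mathfrak{m}_i$ whenever the next large jump arrives. One must carefully treat the atypical event that a large jump lands the process close to a saddle $\mathfrak{s}_j$; standard estimates based on the absolute continuity of the $\alpha$-stable law show that this has probability vanishing with $\e$, so it does not contribute to the finite-dimensional limits.

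Given this reduction, the transition mechanism of the limiting chain is read off from the Lévy measure $\nu(\di y)=\tfrac{\alpha}{2}|y|^{-1-\alpha}\,\di y$ (normalised by the choice \eqref{eq:chf}). Starting at $\mathfrak{m}_i$, a large jump of size $\e y$ pushes the process to $\mathfrak{m}_i+\e y$, which after the deterministic relaxation is trapped in the well of index $j$ if and only if
\be
\e y \in (\mathfrak{s}_{j-1}-\mathfrak{m}_i,\mathfrak{s}_{j}-\mathfrak{m}_i).
\ee
Integrating the scaled Lévy measure $\e^\alpha \nu(\e\,\cdot)= \nu$ over this window and identifying with the time change $t/\e^\alpha$ yields exactly the off-diagonal rates $q_{i,j}$ of \eqref{eintraegeQ}; summing gives the diagonal entry. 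Combining this with the strong Markov property at the successive large-jump times produces the one-step transition law of a continuous-time Markov chain with generator $\mathbf Q$.

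Finally, the convergence of the finite-dimensional distributions of $(X^\e_{t/\e^\alpha})$ to those of $Y$ follows by iterating the above picture: between successive large jumps the process is, with probability $1-o(1)$, essentially equal to the attractor of the current well, while the jump times and jump destinations converge jointly to those of the Markov chain with generator $\mathbf Q$. The hard technical part, in my view, is the quantitative control of the small-noise deviation $\e\xi^\e_\cdot$ uniformly on the (random, $\e$-dependent) inter-jump intervals and the simultaneous verification that excursions starting near the saddles or excursions that cross several wells in a single relaxation episode are asymptotically negligible; this is where the assumption $|U'(x)|\geq c|x|^{1+c}$ at infinity and the non-degeneracy of the extrema enter crucially to provide uniform dissipativity and uniform contraction rates.
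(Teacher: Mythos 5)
The paper does not prove Theorem~\ref{th:1}; it quotes it from \cite{ImkellerP-08} (hence the $\blacksquare$), so there is no in-paper proof to compare against. Your sketch, however, is faithful to the strategy of that reference: split the $\alpha$-stable driver by a L\'evy--It\^o cut at a threshold $\e^{-\rho}$, control the small-jump part so that between large jumps $X^\e$ tracks the gradient flow and relaxes to the nearest $\mathfrak{m}_i$, and read the limiting rates off the scale-invariance $\e^{\alpha}\nu(\e\,\cdot)=\nu$ of the $\alpha$-stable L\'evy measure over the windows $(\mathfrak{s}_{j-1}-\mathfrak{m}_i,\mathfrak{s}_j-\mathfrak{m}_i)$. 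The computation of $q_{i,j}$ and the role of the uniform dissipativity $|U'(x)|>c|x|^{1+c}$ are as in the reference.

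One quantitative claim in your sketch needs correction: with $\rho\in(0,1)$, the large-jump process $\eta^\e$ has intensity $\asymp\e^{\alpha\rho}$, so after the rescaling $t\mapsto t/\e^\alpha$ its intensity is $\asymp\e^{-\alpha(1-\rho)}\to\infty$; the first large jump therefore occurs at a time tending to $0$, not at a tight, non-degenerate time. What is tight on the rescaled clock is the first \emph{well-changing} jump, i.e.\ a jump of $L$ of size $\gtrsim\e^{-1}$, whose rescaled intensity is $\mathcal O(1)$. Typical jumps of $\eta^\e$ have size $\e\cdot\e^{-\rho}=\e^{1-\rho}\to 0$ in $X^\e$-scale and thus do not leave the well; the proof in \cite{ImkellerP-08} accordingly iterates over the (many) large jumps, showing each either keeps the process in the same well (with deterministic relaxation in between) or, with a per-jump probability of order $\e^{\alpha(1-\rho)}$, transfers it to another well, and it is the geometric accumulation of these rare events that produces the $\e^{-\alpha}$ exit time scale and the exponential structure of the limit chain. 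This iteration, rather than a single tight large-jump time, is the mechanism to make precise when you flesh out the sketch.
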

In other words, there is a time scale $T(\e)=\e^{-\alpha}$ on which the process $X^\e$ reminds of a finite state Markov chain $Y$. 
It was also shown in \cite{ImkellerP-08}, that on slower time scales $\e^{-\mu}$, $\mu<\alpha$, the process $X^\e$ 
does not leave the potential well where it has started. On faster time scales $\e^{-\mu}$, $\mu>\alpha$, one cannot obtain a 
meaningful limit of $X^\e$ since its life times in the potential wells converge to zero and the process 
$X^\varepsilon_{t/\varepsilon^\alpha}$ persistently jumps between different wells.

The generator of $X^\e$ is the integro-differential operator
\begin{equation}
\begin{aligned} 
\mathcal D_\e f(x)&=  \frac{\alpha}{2}\e^\alpha\int_0^\infty \frac{f(x+z)-2f(x)+f(x-z)}{z^{1+\alpha}}\,\di z -U'(x)f'(x)\\
&= - \frac{\alpha}{2}\e^\alpha(-\Delta)^{\alpha/2}f(x)-U'(x)f'(x),\quad f\in C_0^\infty (\mathbb R,\mathbb R),
\end{aligned}
\end{equation}
and the metastability result of Theorem \ref{th:1} suggests that the top spectrum of $\mathcal D_\e$ should remind of the spectrum of the 
Markov chain $Y$. One can expect that the top $n$ eigenvalues 
$\lambda^\varepsilon_1=0,\lambda^\varepsilon_2,\ldots,\lambda^\varepsilon_n$ of $\mathcal D_\e$
should be closely connected with the eigenvalues of the matrix $\mathbf{Q}$ and well separated from the rest of the spectrum 
 by a spectral gap. Since the time scale in the metastability result 
is unique, the eigenvalues should be all
of the same 
order $\mathcal O(\varepsilon^{\alpha})$.

Unfortunately not much can be said about the spectrum of $\mathcal D_\e$. No weighted space is known where 
$\mathcal D_\e$ is self-adjoint and no general spectral theory can be applied in this case. 
Although it is known that the invariant distribution exists and is unique \cite{SamorodnitskyG-03,Kulik-09}, explicit 
formulae for its density can be obtained only in a very 
few particular cases (mainly for the Cauchy process $\alpha=1$ and polynomial potentials, see \cite{ChechkinGKM-04,DubSpa07,DybSokChe10})

In this paper we make the first step towards a better understanding of the 
spectral properties of $X^\e$ by reduction of a jump-diffusion to a finite state discrete time Markov 
chain and analysing its spectral properties in the limit $\e\to 0$.


\section{A discrete L\'evy driven system and the main result}

We construct a discrete-time Markov chain on a finite state space
that mimic the metastable behaviour of the solution $X^\varepsilon$ of \eqref{SDE} in the limit of small $\e$.
The construction is based on the standard Euler scheme for SDEs. 

For any $\gamma>0$ there are  a range parameter $R>0$, a time step $h>0$ as well as the spacial mesh parameter 
$\delta>0$ such that $R^{-1}+h+\delta\leq \gamma$ and such that the following holds true. 

All the local minima of $U$ belong to $[-R,R)$, $-R<\mathfrak{m}_1<\mathfrak{m}_n<R$. With $R$ fixed, let us redefine the 
points $\mathfrak{s}_0:=-R$ and $\mathfrak{s}_n:=R$ for convenience.

There exists a finite set of points $\mathcal S$ and a partition
$\bigsqcup_{x\in\mathcal S} I_x=[\mathfrak{s}_0,\mathfrak{s}_n)=$ consisting of the intervals $I_x=[a_x,b_x)$
such that $\max_{x\in \mathcal S}|b_x-a_x|\leq \delta/2$, each interval  $I_x$ contains only one point from $\mathcal S$ and 
$x\in (a_x,b_x)$, $\mathcal M\subset \mathcal S$,
$\{\mathfrak{s}_0,\dots, \mathfrak{s}_n\}\subset \{a_x,b_x\}_{x\in\mathcal S}$,
and for any $x\in \mathcal S\backslash\mathcal M$
\begin{equation}
\label{eq:1}
x-hU'(x)\notin I_x.
\end{equation}

\begin{exa}
The set $\mathcal S$ and the intervals $I_x$ can be constructed as follows. Let $R>\frac{3}{\gamma}\vee |\mathfrak{m}_1|\vee |\mathfrak{m}_n|$.
Let $\delta<\frac{\gamma}{3}\wedge \frac{1}{8}\min_{i,j} |\mathfrak{s}_i-\mathfrak{m}_j|$. 

First, we demand that all $\mathfrak{m_i}\in\mathcal S$ and set
$I_{\mathfrak{m}_i}=[\mathfrak{m}_i-\frac{\delta}{4},\mathfrak{m}_i+\frac{\delta}{4})$.
Choose  $0<h\leq \gamma/3$ 
such that for all $i=1,\dots, n$, all $x\in [\mathfrak{s}_{i-1}+ \frac{\delta}{4}, \mathfrak{m}_i- \frac{\delta}{4}]$
\begin{equation}
x-U'(x)h \in [\mathfrak{s}_{i-1},\mathfrak{m}_i]
\end{equation} 
and 
for $x\in [\mathfrak{m}_{i}+ \frac{\delta}{4} , \mathfrak{s}_i-\frac{\delta}{4}]$ 
\begin{equation}
x-U'(x)h \in [\mathfrak{m}_{i},\mathfrak{s}_i].
\end{equation}

For definiteness, let us construct the partition of the
interval $[\mathfrak{m}_1+\frac{\delta}{4},\mathfrak{s}_1)$. 
Let $z_1$ be the solution of the equation $z-U'(z)h=\mathfrak{m}_1+\frac{\delta}{4}$, $z_1>\mathfrak{m}_1+\frac{\delta}{4}$.
Decompose the interval
\begin{equation}
{}[\mathfrak{m}_1+\frac{\delta}{4}, z_1\wedge \mathfrak{s}_i-\frac{\delta}{4})
\end{equation}
into a finite disjoint union of the intervals $[a_x, b_x)$, $b_x - a_x<\frac{\delta}{2}$, denote by $x$ the middle point of each such 
an interval, and add $x$ to $\mathcal S$. Then due to the fact that $U'(x)>0$ on 
$[\mathfrak{m}_{1}+ \frac{\delta}{4} , \mathfrak{s}_1-\frac{\delta}{4}]$, we get $x-U'(x)h\in I_{\mathfrak{m}_1}$.
Denote $z_2$ the solution of the equation $z-U'(z)h=z_1$, $z_2>z_1$ and 
again decompose the interval $[z_1,z_2 \wedge \mathfrak{s}_1-\frac{\delta}{4})$ a finite disjoint union of the intervals
of the maximal length $\frac{\delta}{2}$. Continue this procedure  and assign the last interval to be  $(a_x,\mathfrak{s}_i]$. 
\hfill $\blacksquare$
\end{exa}
For any $x\in \mathcal S$ let $y^*(x)\in \mathcal S$ be the unique element such that
\begin{equation}
x-hU'(x)\in I_{y^*(x)} .
\end{equation}
Let us denote by $T\colon \mathcal{S}\rightarrow \mathcal{S}$ the mapping corresponding to the operation ``\,*\,'',
i.e.\ $Tx\coloneqq y^*(x)$.
The condition \eqref{eq:1} implies that $Tx=y^*(x)\neq x$ for $x\in\mathcal S\backslash\mathcal M$ whereas
$Tx=y^*(x)= x$ for $x\in\mathcal M$.

On the set $\mathcal S$ define a discrete time deterministic motion $Z^0=(Z^0_k)_{k\geq 0}$ such that
\begin{equation}
\label{eq:2}
Z^0_0=x,\ Z^0_k=y^*(Z^0_{k-1})=T^k x,\ k\geq 1.
\end{equation}
For any $x\in \mathcal S$ the sequence $\{Z^0_k(x)\}_{k\geq 0}$ is monotone.

The deterministic motion $Z^0$ mimics the behaviour of the solutions of the deterministic 
ordinary differential equation $\dot x_t=-U'(x_t)$.
It can also be described as a discrete time Markov chain on $\mathcal S$ with the matrix 
$\mathbf{P}^0=(p^0_{x,y})_{x,y\in\mathcal{S}}$ of one step transition probabilities given by
\begin{align}
p^0_{x,y}\coloneqq\begin{cases}
1, & y=y^*(x),\\
0, &\text{otherwise}.
\end{cases}
\end{align}

Eventually let us construct a discrete-time Markov chain 
$Z^\varepsilon=(Z^\varepsilon_k)_{k\geq 0}$ on the state space $\mathcal{S}$ which mimics the behaviour of 
the jump-diffusion
$X^\e$ and can be considered as
a random perturbation of $Z^0$. 

Denote $\tilde{y}\coloneqq \min\{y\colon y\in\mathcal{S}\}$ and $\hat{y}\coloneqq \max\{y\colon y\in\mathcal{S}\}$ and 
define the  matrix of one-step transition probabilities $\mathbf{P}^\varepsilon=(p^\varepsilon_{x,y})_{x,y\in\mathcal{S}}$ of $Z^\e$ as
\begin{align}\label{deftransprobs1}
 p^\varepsilon_{x,y}\coloneqq \mathbb{P}(x-h U^\prime(x)+\varepsilon h^{\frac{1}{\alpha}}L_1\in I_y),\quad y\neq\tilde{y},\hat{y},
\end{align}
as well as
\begin{align}\label{deftransprobs2}
 p^\varepsilon_{x,\tilde{y}}\coloneqq \mathbb{P}(x-hU^\prime(x)+\varepsilon h^{\frac{1}{\alpha}}L_1\leq b_{\tilde{y}})
\end{align}
\begin{align}\label{deftransprobs3}
p^\varepsilon_{x,\hat{y}}\coloneqq \mathbb{P}(x-hU^\prime(x_k)+\varepsilon h^{\frac{1}{\alpha}}L_1\geq a_{\hat{y}}).
\end{align}
By construction, the Markov chain $Z^\e$ gets reflected at the barriers $-R$ and $R$
which mimics the very fast return from infinity of the trajectory of $X^\e$ due to the fast increase of the potential at infinity.

First we have to study the metastable behaviour of $Z^\e$ as $\e\to 0$. Since $Z^\e$ is constructed in such a way that it
resembles the L\'evy driven jump diffusion $X^\e$ one could expect that the $Z^\e$ demonstrates the same metastable behaviour. Indeed, the
following analogue of the Theorem \ref{th:1} holds true.

\begin{thm}
\label{th:2}
Let $k=1,\dots,n$ and
$x\in \mathcal S\cap [\mathfrak{s}_{k-1},\mathfrak{s}_{k} ]$.
Then the process $(Z^\e_{\lceil \frac{t}{h\varepsilon^{\alpha}}\rceil})_{t> 0}$, $Z^\e_0=x$, converges, 
as $\varepsilon\rightarrow 0$, in the sense of finite dimensional distributions to a Markov chain
$Y=(Y_t)_{t> 0}$, $Y_0=\mathfrak{m}_k$, on the state space
$\mathcal{M}$ with the generator $\mathbf{Q}$ defined in \eqref{eintraegeQ}.
\end{thm}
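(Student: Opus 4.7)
The plan is to follow the strategy of the proof of Theorem~\ref{th:1} in \cite{ImkellerP-08}, adapted to the discrete-time, finite-state setting. The step size $h$ is $\e$-independent, so the gradient descent built into the deterministic part $x \mapsto x - hU'(x)$ takes a bounded number of steps $N_0 = N_0(\gamma)$ to drive any initial point in $\mathcal{S}\cap[\mathfrak{s}_{k-1},\mathfrak{s}_k]$ into $I_{\mathfrak{m}_k}$, while the $\alpha$-stable noise produces excursions of order $\Theta(1)$ with probability $\Theta(\e^\alpha)$ per step. By a union bound, the probability that the noisy chain $Z^\e$ deviates from $Z^0$ during these $N_0$ steps is $O(\e^\alpha)$, which is negligible after rescaling time by $h\e^\alpha$. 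Hence I may replace the initial condition $x$ by $\mathfrak{m}_k$ at $t = 0$.

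Next I would compute the one-step transition probabilities from a local minimum. Since $U'(\mathfrak{m}_i) = 0$, one has $p^\e_{\mathfrak{m}_i, y} = \mathbb{P}(\e h^{1/\alpha} L_1 \in I_y - \mathfrak{m}_i)$. Summing over $y \in \mathcal{S} \cap (\mathfrak{s}_{j-1},\mathfrak{s}_j)$ for $j \neq i$ gives
\begin{equation*}
\mathbb{P}\bigl(Z^\e_1 \in (\mathfrak{s}_{j-1},\mathfrak{s}_j) \,\big|\, Z^\e_0 = \mathfrak{m}_i\bigr) = \mathbb{P}\bigl(\e h^{1/\alpha} L_1 \in (\mathfrak{s}_{j-1}-\mathfrak{m}_i,\mathfrak{s}_j-\mathfrak{m}_i)\bigr),
\end{equation*}
and the tail asymptotics $\mathbb{P}(\pm L_1 \geq u) \sim \frac{1}{2} u^{-\alpha}$ identify this expression as $h\e^\alpha q_{ij} + o(\e^\alpha)$; for sufficiently large $R$ the reflection corrections in \eqref{deftransprobs2}--\eqref{deftransprobs3} contribute only at the $o(\e^\alpha)$ level. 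Consequently the probability of staying in the original well is $1 + h\e^\alpha q_{ii} + o(\e^\alpha)$.

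Define the well-index process $J^\e_k \in \{1,\ldots,n\}$ by $Z^\e_k \in [\mathfrak{s}_{J^\e_k-1},\mathfrak{s}_{J^\e_k})$ and let $\tau^\e_1 = \inf\{k \geq 1 : J^\e_k \neq J^\e_0\}$. Combining the previous two steps with the strong Markov property, $\tau^\e_1$ conditioned on $J^\e_0 = i$ is approximately geometric with success probability $-h\e^\alpha q_{ii}(1+o(1))$, so $h\e^\alpha \tau^\e_1$ converges in law to an $\mathrm{Exp}(|q_{ii}|)$ random variable, while the destination well is $j$ with probability $q_{ij}/|q_{ii}|+o(1)$. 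After each jump, the relaxation argument of the first paragraph drives $Z^\e$ back to the new minimum $\mathfrak{m}_j$ in $o(\e^{-\alpha})$ steps. Iterating via the strong Markov property and passing to finite-dimensional marginals identifies the limit as the continuous-time Markov chain $Y$ on $\mathcal{M}$ with generator $\mathbf{Q}$.

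The main obstacle will be the uniform control of the $o(\e^\alpha)$ remainders over the long horizon of $\Theta(\e^{-\alpha})$ steps. One must rule out the accumulation of small-noise perturbations that do not cross any separatrix $\mathfrak{s}_i$ but still displace $Z^\e$ far enough from $\mathfrak{m}_i$ to alter the escape rate in subsequent steps. This is handled by a confinement estimate: the probability of a single noise increment exceeding $\delta/2$ is $O(\e^\alpha)$, and between such events the deterministic map $T$ returns the chain to $I_{\mathfrak{m}_i}$ in at most $N_0$ steps. Hence the fraction of time spent outside a small neighbourhood of $\mathfrak{m}_i$ is $o(1)$, and the one-step escape rate computed from $\mathfrak{m}_i$ in the second paragraph remains valid in effective time.
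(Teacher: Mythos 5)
Your proposal is correct and follows essentially the same route that the paper itself indicates: adapting the argument of Theorem~1.1 in \cite{ImkellerP-08} to the finite-state discrete-time setting (fast deterministic relaxation to $\mathfrak{m}_k$, one-big-jump escape rates $h\e^\alpha q_{ij}$ from the minima, approximately geometric exit times converging to exponentials after rescaling, and a confinement estimate to control the $\Theta(\e^{-\alpha})$-step horizon), which is precisely the program the paper defers to Chapter~3 of \cite{Burghoff14}. Your sketch actually supplies more detail than the paper's own two-sentence proof; the only minor imprecision is calling the reflection at $\pm R$ a small ``correction'' --- in fact the reflection is exactly what makes the boundary wells $\mathcal S_1$ and $\mathcal S_n$ absorb the full semi-infinite tails so that their rates coincide with $q_{i,1}$ and $q_{i,n}$ as defined with $\mathfrak{s}_0=-\infty$, $\mathfrak{s}_n=+\infty$ in \eqref{eintraegeQ}.
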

\begin{proof}
The proof of this result essentially follows the arguments presented in \cite{ImkellerP-08} for the case of a one dimensional jump
diffusion $X^\e$. The arguments are even easier since by construction, $\mathcal S$ does not contain the local maxima of $U$
Furthermore thanks to the reflection condition at $\pm R$ and the finiteness of $\mathcal S$, 
we do not have to consider returns of $Z^\e$ from infinity.
Note that this convergence result does not depend on the size or the particular choice of the state space $\mathcal S$.
For a detailed proof we refer the reader to Chapter 3 in \cite{Burghoff14}.
\end{proof}

The Theorem \ref{th:2} suggests that 
top spectrum of the generator of the Markov chain 
$Z^\varepsilon$
should be closely related to the spectrum of the matrix $\mathbf{Q}$. Let $N=|\mathcal S|$ denote the cardinality of the set $\mathcal S$. 
To compare the generators of $Z^\varepsilon$ and $Y$ we note that
\begin{equation}
\mathbf{P}^\varepsilon=\ex^{\mathbf{P}^\varepsilon-\mathbf{I}_N},
\end{equation}
so that the matrix $\mathbf{P}^\varepsilon-\mathbf{I}_N$
can be viewed as the discrete analogue of an infinitesimal generator.
Taking into account the  time scaling $t\mapsto \frac{t}{h \e^\alpha}$ which appears in the metastability result 
(Theorem \ref{th:2}), we introduce the $N\times N$ matrix
\begin{align}
\label{discretegenerator}
 \mathbf{Q}^\varepsilon\coloneqq \frac{1}{h\varepsilon^\alpha}\left(\mathbf{P}^\varepsilon-\mathbf{I}_{N}\right)
\end{align}
and denote by $\sigma(\mathbf{Q}^\varepsilon)\coloneqq\{\lambda^\varepsilon_1,\ldots,\lambda^\varepsilon_N\}$ and
 $\sigma(\mathbf{Q})\coloneqq\{\lambda^{\mathbf{Q}}_1,\ldots,\lambda^{\mathbf{Q}}_n\}$ the spectra of 
 $\mathbf{Q}^\varepsilon$ and $\mathbf{Q}$, respectively.
 Then the following main theorem holds.
\begin{thm}
\label{theoremeigenwerte}
The spectrum $\sigma(\mathbf{Q}^\varepsilon)$ can be divided into two disjoint parts 
$\sigma_1(\mathbf{Q}^\varepsilon)$ and $\sigma_2(\mathbf{Q}^\varepsilon)$ for which the
following assertions hold:
\begin{itemize}
 \item[(i)] $\sigma_1(\mathbf{Q}^\varepsilon)$ contains precisely $n$ eigenvalues 
 $\lambda^\varepsilon_1,\ldots,\lambda^\varepsilon_n$ such that $\lambda_1^\varepsilon=\lambda^{\mathbf{Q}}_1=0$ and
    \begin{align}
     \lim_{\varepsilon\rightarrow 0}\lambda^\varepsilon_i=\lambda^\mathbf{Q}_i,\quad 2\leq i\leq n;
    \end{align}
\item[(ii)] For any sequence $\{r_\e\}_{\e>0}$ such that $r_\e\to+\infty$, $r_\e \e^{\frac{\alpha}{n+1}}\to 0$ 
there is $\e_0>0$ such that for all $\e\in(0,\e_0]$ and all $i=n+1,\dots, N$
    \begin{align}
| \lambda^\varepsilon_i|\geq r_\e.
    \end{align}
\end{itemize}
\end{thm}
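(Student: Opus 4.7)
The plan is to treat $\mathbf{P}^\varepsilon$ as a small perturbation of the deterministic transition matrix $\mathbf{P}^0$ and to translate every spectral statement about $\mathbf{Q}^\varepsilon$ via the affine bijection
\begin{equation*}
z\in\sigma(\mathbf{Q}^\varepsilon)\ \Longleftrightarrow\ 1+h\varepsilon^\alpha z\in\sigma(\mathbf{P}^\varepsilon).
\end{equation*}
First, I would use the stable tail asymptotic $\mathbb{P}(|L_1|\geq u)\sim u^{-\alpha}$ to establish that $\mathbf{P}^\varepsilon=\mathbf{P}^0+\varepsilon^\alpha\mathbf{R}^\varepsilon$ with a uniformly bounded remainder $\mathbf{R}^\varepsilon$. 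Then I would analyse the unperturbed matrix $\mathbf{P}^0$: since every orbit of $T$ reaches $\mathcal{M}$ in at most $N$ steps, $\sigma(\mathbf{P}^0)=\{0,1\}$, where $1$ is semisimple with $n$-dimensional right eigenspace spanned by the basin indicator vectors $\chi_1,\dots,\chi_n$ and left eigenspace spanned by the Dirac masses $\delta_{\mathfrak{m}_1},\dots,\delta_{\mathfrak{m}_n}$, while $0$ has algebraic multiplicity $N-n$ arising from the nilpotent action of $\mathbf{P}^0$ on the transient part of $\mathcal{S}$.

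Second, I would fix two disjoint Cauchy contours $\Gamma_1$ and $\Gamma_0$ of radius $\tfrac{1}{3}$ around $1$ and $0$ and invoke Kato's holomorphic perturbation theory: for $\varepsilon$ small enough, exactly $n$ eigenvalues of $\mathbf{P}^\varepsilon$ sit inside $\Gamma_1$ and $N-n$ inside $\Gamma_0$, and the associated spectral projectors obey $\Pi^\varepsilon=\Pi^0+O(\varepsilon^\alpha)$. Semisimplicity of the eigenvalue $1$ forces $|\lambda-1|=O(\varepsilon^\alpha)$ for $\lambda$ inside $\Gamma_1$, so that the $n$ scaled eigenvalues $z_i^\varepsilon:=(\lambda-1)/(h\varepsilon^\alpha)$ remain bounded; stochasticity of $\mathbf{P}^\varepsilon$ pins $\lambda^\varepsilon_1=0$ for every $\varepsilon$.

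For assertion (i), I would identify the limits of the $z_i^\varepsilon$ via the $n$-dimensional reduced operator $\mathbf{K}^\varepsilon:=\frac{1}{h\varepsilon^\alpha}\Pi^\varepsilon(\mathbf{P}^\varepsilon-\mathbf{I}_N)\Pi^\varepsilon$, whose non-zero eigenvalues are exactly $\{z_i^\varepsilon\}$. Using $\mathfrak{m}_i-hU'(\mathfrak{m}_i)=\mathfrak{m}_i$ and testing at $\mathfrak{m}_i$, the off-diagonal entries ($i\neq j$) satisfy to leading order
\begin{equation*}
\frac{(\mathbf{P}^\varepsilon\chi_j)(\mathfrak{m}_i)}{h\varepsilon^\alpha}
=\frac{1}{h\varepsilon^\alpha}\mathbb{P}\Bigl(\mathfrak{m}_i+\varepsilon h^{1/\alpha}L_1\in\bigcup\nolimits_{y:\chi_j(y)=1}I_y\Bigr)
\longrightarrow\frac{1}{2}\Bigl|\frac{1}{|\mathfrak{s}_{j-1}-\mathfrak{m}_i|^\alpha}-\frac{1}{|\mathfrak{s}_j-\mathfrak{m}_i|^\alpha}\Bigr|=q_{i,j},
\end{equation*}
by a change of variables and the density asymptotic $f_{L_1}(u)\sim\tfrac{\alpha}{2}|u|^{-1-\alpha}$; the diagonal entries then follow from the row-sum identity. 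Hence $\mathbf{K}^\varepsilon\to\mathbf{Q}$ in the basin indicator basis and $\{z_i^\varepsilon\}\to\sigma(\mathbf{Q})$.

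For assertion (ii), any $\lambda\in\sigma(\mathbf{P}^\varepsilon)$ inside $\Gamma_0$ obeys $|\lambda-1|\geq\tfrac{2}{3}$, so the corresponding eigenvalue of $\mathbf{Q}^\varepsilon$ satisfies $|\lambda^\varepsilon_i|=|\lambda-1|/(h\varepsilon^\alpha)\geq 2/(3h\varepsilon^\alpha)$. Since any admissible sequence $r_\varepsilon$ is \emph{a fortiori} $o(\varepsilon^{-\alpha})$, this dominates $r_\varepsilon$ once $\varepsilon$ is small enough. I expect the main obstacle to be the identification of the reduced operator: one must verify that the $\varepsilon^\alpha$-corrections in $\Pi^\varepsilon-\Pi^0$ and the mismatch between the approximate basins $\bigcup_{y:\chi_j(y)=1}I_y$ and the true wells $(\mathfrak{s}_{j-1},\mathfrak{s}_j)$, controlled by the mesh parameter $\gamma$, remain negligible after division by $\varepsilon^\alpha$, so that the leading-order matrix of $\mathbf{K}^\varepsilon$ really equals $\mathbf{Q}$ and not some perturbation thereof.
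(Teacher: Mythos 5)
Your argument is correct, but it follows a genuinely different route from the paper. The paper never invokes spectral perturbation theory: it expands the characteristic polynomial $P^\varepsilon(\lambda)=\det(\mathbf{Q}^\varepsilon-\lambda\mathbf{I}_N)$ via the Leibniz formula, sorts the permutations by the number $p$ of states at which they deviate from the pair $\{x,y^*(x)\}$ prescribed by the deterministic map $T$, and proves the explicit bound $|(-\varepsilon^\alpha h)^{N-n}P^\varepsilon(\lambda)-P^{\mathbf{Q}}(\lambda)|\leq C(C+|\lambda|)^{n+1}\varepsilon^\alpha$ valid for all $\lambda\in\mathbb{C}$ (Proposition \ref{proppolynome}); assertion (i) then follows from Hurwitz's theorem and assertion (ii) from applying the bound on discs of radius $r_\varepsilon$ --- the exponent $n+1$ in the error is exactly the origin of the condition $r_\varepsilon\varepsilon^{\alpha/(n+1)}\to 0$. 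Your route isolates the structural fact hiding behind that combinatorics, namely that $\sigma(\mathbf{P}^0)=\{0,1\}$ with $1$ semisimple of multiplicity $n$ (right eigenvectors $\boldsymbol{\delta}^1,\dots,\boldsymbol{\delta}^n$, left eigenvectors the Dirac masses at the minima, nilpotent transient block), and your reduced-matrix computation $\frac{1}{h\varepsilon^\alpha}\bigl(\sum_{y\in\mathcal{S}_j}p^\varepsilon_{\mathfrak{m}_i,y}-\delta_{ij}\bigr)\to q_{i,j}$ is precisely the paper's Lemma \ref{approxqij}. What each approach buys: yours is shorter, conceptually transparent, and yields a strictly stronger spectral gap, $|\lambda_i^\varepsilon|\geq \tfrac{2}{3}(h\varepsilon^\alpha)^{-1}$ for $i>n$, which implies (ii) for \emph{any} $r_\varepsilon=o(\varepsilon^{-\alpha})$ and matches the numerics of Fig.~\ref{f:evalues}; the paper's approach is elementary and self-contained and delivers uniform quantitative control of the characteristic polynomial on the whole plane. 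Two of the caveats you raise resolve easily: the mismatch between $\bigcup_{y\in\mathcal{S}_j}I_y$ and the well is vacuous, since by construction $\bigsqcup_{y\in\mathcal{S}_j}I_y=[\mathfrak{s}_{j-1},\mathfrak{s}_j)$ exactly ($\{\mathfrak{s}_0,\dots,\mathfrak{s}_n\}$ are interval endpoints); and the $O(\varepsilon^\alpha)$ corrections from $\Pi^\varepsilon-\Pi^0$ are harmless because $(\mathbf{P}^\varepsilon-\mathbf{I}_N)\Pi^0=O(\varepsilon^\alpha)$ and $\Pi^0(\mathbf{P}^\varepsilon-\mathbf{I}_N)=O(\varepsilon^\alpha)$, so all cross terms are $O(\varepsilon^{2\alpha})$. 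The one place where your sketch should be made precise is the identification of $\mathbf{K}^\varepsilon$ with an $n\times n$ matrix ``in the basin indicator basis'': $\mathrm{ran}\,\Pi^\varepsilon\neq\mathrm{ran}\,\Pi^0$, so you need the standard similarity $W^\varepsilon=\mathbf{I}_N+O(\varepsilon^\alpha)$ intertwining the two ranges (or, equivalently, continuity of eigenvalues of the $N\times N$ matrices $\frac{1}{h\varepsilon^\alpha}\Pi^\varepsilon(\mathbf{P}^\varepsilon-\mathbf{I}_N)\Pi^\varepsilon$) before reading off the limit $\mathbf{Q}$; this is routine and does not affect the validity of the argument.
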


The proof the Theorem \ref{theoremeigenwerte}
does not take unto account any a priori information about the eigenvalues of $\mathbf{Q}$ 
other than the fact that one of them is $0$. However,
we strongly believe that in the generic case, e.g.\ when the potential $U$ does not have any intrinsic symmetries, 
all eigenvalues of $\mathbf{Q}$ are real and simple. This conjecture is supported by numerous computer simulations of 
spectra of a matrix $\mathbf{Q}$,
when the entries of the form \eqref{eintraegeQ} are randomly generated.

Denote by $\boldsymbol{\delta}^j=(\delta^j_x)_{x\in\mathcal{S}}$, $1\leq j\leq n,$ the  indicator function (vector) of the set
$\mathcal{S}_j:=\mathcal{S}\cap[\mathfrak{s}_{i-1},\mathfrak{s}_{i}]$,
i.e.\ $\delta^j_x=1$ for $x\in\mathcal{S}_j$ and $0$ elsewhere.

\begin{thm}\label{Theoremeigenvektoren}
Assume that the eigenvalues of $\mathbf{Q}$ are real and simple.
Let $1\leq i\leq n$ and let $\boldsymbol{\psi}^{\varepsilon,i}=(\psi^{\varepsilon,i}_x)_{x\in\mathcal{S}}$ be the 
right eigenvector of $\mathbf{Q}^\varepsilon$ associated with $\lambda^{\varepsilon}_i$
and normalized such that $\max_{1\leq j\leq n}|\psi^{\varepsilon,i}_{\mathfrak{m}_j}|=1$.
Let $\boldsymbol{\psi}^{\mathbf{Q},i}=(\psi^{\mathbf{Q},i}_j)_{j=1}^n$ be the 
right eigenvector of $\mathbf{Q}$ associated with $\lambda^\mathbf{Q}_i$ and normalized such that
$\max_{1\leq j\leq n}|\psi^{\mathbf{Q},i}_j|=1$.
Then the following limit holds:
\begin{align}
\max_{x\in\mathcal{S}}
\Big|\psi^{\varepsilon,i}_x -\sum_{j=1}^n \psi^{\mathbf{Q},i}_j \delta^j_x\Big|\rightarrow 0,\quad \varepsilon\rightarrow 0.
\end{align}
In particular, for $i=1$, $\lambda^\e_1=0$ and $\boldsymbol{\psi}^{\varepsilon,1}=\boldsymbol{1}$.
\end{thm}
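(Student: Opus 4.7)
The plan is to rewrite the eigenvalue equation $\mathbf{Q}^\varepsilon \boldsymbol{\psi}^{\varepsilon,i} = \lambda^\varepsilon_i \boldsymbol{\psi}^{\varepsilon,i}$ in the ``$\mathbf{P}^\varepsilon$'' form
\begin{equation*}
(1 + h\varepsilon^\alpha \lambda^\varepsilon_i)\,\psi^{\varepsilon,i}_x = \sum_{y \in \mathcal{S}} p^\varepsilon_{x,y}\, \psi^{\varepsilon,i}_y = \mathbb E_x\big[\psi^{\varepsilon,i}(Z^\varepsilon_1)\big],
\end{equation*}
and to combine it with Theorem \ref{theoremeigenwerte} (giving $\lambda^\varepsilon_i \to \lambda^{\mathbf{Q}}_i$ and the $\varepsilon$-independent spectral gap) and Theorem \ref{th:2} (fidi convergence of $Z^\varepsilon$ on the scale $t/(h\varepsilon^\alpha)$ to the limiting chain $Y$ on $\mathcal{M}$).

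I would first establish uniform boundedness of $\boldsymbol{\psi}^{\varepsilon,i}$ together with intra-well rigidity. For every $x \in \mathcal{S}\setminus\mathcal{M}$ the deterministic step dominates: the $\alpha$-stable tails of $L_1$ and the positive gap between $I_{Tx}$ and the rest of the partition yield $p^\varepsilon_{x,Tx} = 1 - \mathcal O(\varepsilon^\alpha)$ and $\sum_{y \neq Tx} p^\varepsilon_{x,y} = \mathcal O(\varepsilon^\alpha)$. Substituted into the displayed identity this gives
\begin{equation*}
\psi^{\varepsilon,i}_x = \psi^{\varepsilon,i}_{Tx} + \mathcal O(\varepsilon^\alpha)\,\|\boldsymbol{\psi}^{\varepsilon,i}\|_\infty.
\end{equation*}
Since each $x \in \mathcal{S}_j$ reaches $\mathfrak{m}_j$ in at most $|\mathcal{S}|$ iterates of $T$, iterating yields $\psi^{\varepsilon,i}_x = \psi^{\varepsilon,i}_{\mathfrak{m}_j} + \mathcal O(\varepsilon^\alpha)\,\|\boldsymbol{\psi}^{\varepsilon,i}\|_\infty$. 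Together with the normalization $\max_j |\psi^{\varepsilon,i}_{\mathfrak{m}_j}|=1$ this absorbs $\|\boldsymbol{\psi}^{\varepsilon,i}\|_\infty$ into the left-hand side, producing a uniform $\varepsilon$-independent bound and, simultaneously, forcing any componentwise subsequential limit $g$ of $\boldsymbol{\psi}^{\varepsilon,i}$ to be piecewise constant: $g = \sum_{j=1}^n a_j \boldsymbol{\delta}^j$ with $a_j = g(\mathfrak{m}_j)$.

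To identify $\boldsymbol{a} = (a_j)_{j=1}^n$ I would iterate the eigenvalue identity $K$ times,
\begin{equation*}
(1 + h\varepsilon^\alpha \lambda^\varepsilon_i)^K \psi^{\varepsilon,i}_x = \mathbb E_x\big[\psi^{\varepsilon,i}(Z^\varepsilon_K)\big],
\end{equation*}
and choose $K(\varepsilon) = \lceil t/(h\varepsilon^\alpha)\rceil$ for a fixed $t>0$. Theorem \ref{theoremeigenwerte}(i) makes the prefactor tend to $\ex^{t\lambda^{\mathbf{Q}}_i}$, so the left-hand side converges along the subsequence to $\ex^{t\lambda^{\mathbf{Q}}_i} a_{j(x)}$, where $j(x)$ is the index of the well containing $x$. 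On the right-hand side, Theorem \ref{th:2} delivers weak convergence of $Z^\varepsilon_{K(\varepsilon)}$, started at any $x\in\mathcal{S}_{j(x)}$, to $Y_t$ started at $\mathfrak{m}_{j(x)}$ on the support $\mathcal{M}$; combined with the uniform bound and the piecewise-constant limit shape from the previous step, this gives $\mathbb E_x[\psi^{\varepsilon,i}(Z^\varepsilon_{K(\varepsilon)})] \to \mathbb E_{\mathfrak{m}_{j(x)}}[g(Y_t)] = (\ex^{t\mathbf{Q}}\boldsymbol{a})_{j(x)}$. Equating the two limits for all $t>0$ and differentiating at $t=0$ yields $\mathbf{Q}\boldsymbol{a} = \lambda^{\mathbf{Q}}_i \boldsymbol{a}$. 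Simplicity of $\lambda^{\mathbf{Q}}_i$ then forces $\boldsymbol{a}$ to be a scalar multiple of $\boldsymbol{\psi}^{\mathbf{Q},i}$, and the normalisations $\max_j |a_j| = 1 = \max_j|\psi^{\mathbf{Q},i}_j|$ reduce the ambiguity to a sign; fixing that sign of $\boldsymbol{\psi}^{\varepsilon,i}$ compatibly for each $\varepsilon$ (possible precisely because the subsequential limit is unique modulo sign) upgrades subsequential convergence to convergence of the full net. The case $i=1$ is immediate from $\mathbf{P}^\varepsilon \boldsymbol{1} = \boldsymbol{1}$.

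The main obstacle I expect lies in the bookkeeping of Step~1: proving the tail bound $\sum_{y\neq Tx} p^\varepsilon_{x,y} = \mathcal O(\varepsilon^\alpha)$ uniformly over $x \in \mathcal{S}\setminus\mathcal{M}$ from the $\alpha$-stable density of $\varepsilon h^{1/\alpha}L_1$, and verifying that iterating $T$ up to $|\mathcal{S}|$ times does not degrade the error in the intra-well comparison. A secondary technicality is the passage to the limit in $\mathbb E_x[\psi^{\varepsilon,i}(Z^\varepsilon_{K(\varepsilon)})]$ with an $\varepsilon$-dependent integrand; this is resolved by the two outputs of Step~1 — the uniform bound and the fact that the integrand is asymptotically coarse-grained on $\mathcal{M}$ — combined with the vanishing of $\mathbb P_x(Z^\varepsilon_{K(\varepsilon)} \notin \mathcal{M})$ guaranteed by Theorem \ref{th:2}.
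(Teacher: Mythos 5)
Your proposal is correct, but it takes a genuinely different route from the paper's proof, and the comparison is worth spelling out. The paper works through Eckhoff's explicit representation of the eigenvector, $\psi^{\varepsilon,i}_x=\sum_j \psi^{\varepsilon,i}_{\mathfrak{m}_j}K^{x,\varepsilon}_{\mathfrak{m}_j,\mathcal M}(u^\varepsilon_i)$, which reduces everything to controlling Laplace transforms of return times to $\mathcal M$. The heavy lifting there is Proposition~\ref{konvbereich} (the radius of convergence $u^\varepsilon_\mathcal M$ diverges as $\varepsilon\to0$), Corollary~\ref{corolaplacetaylor} (Lipschitz control of $u\mapsto K(u)$ near $0$, uniform in $\varepsilon$), and the abstract eigenspace-perturbation Lemma~\ref{l:eigenspace} applied to the residual equation $\mathbf A^{\mathbf Q}(\lambda^{\mathbf Q}_i)\boldsymbol a=\boldsymbol\rho(\varepsilon)$. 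You instead iterate the eigenvalue identity $(\mathbf P^\varepsilon)^{K}\boldsymbol\psi^{\varepsilon,i}=(1+h\varepsilon^\alpha\lambda^\varepsilon_i)^{K}\boldsymbol\psi^{\varepsilon,i}$, extract the piecewise-constant structure directly from the dominance of $p^\varepsilon_{x,Tx}$ (the same mechanism that, in the paper, is encoded in the coupling with the deterministic motion inside $G^{x,\varepsilon}_{I,\mathcal M}$), and then identify the well-values through Theorem~\ref{th:2} by sending $K=\lceil t/(h\varepsilon^\alpha)\rceil$, matching with the $Y$-semigroup $\mathrm e^{t\mathbf Q}$, and differentiating at $t=0$. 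Both arguments hinge on the same two facts — intra-well rigidity and identification of the $\mathcal M$-marginal as a $\mathbf Q$-eigenvector — but you reach them by semigroup/fidi reasoning while the paper reaches them by explicit hitting-time formulas. Your approach is shorter and more probabilistic, at the price of invoking Theorem~\ref{th:2} (proved elsewhere) rather than remaining self-contained; the paper's approach is more quantitative (it produces $\mathcal O(\varepsilon^\alpha)$ rates for $K^{x,\varepsilon}_{\mathfrak{m}_j,\mathcal M}(u^\varepsilon_i)\to\delta^j_x$ rather than a qualitative limit).

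A few technical points worth making explicit in a written version. First, in the passage $\mathbb E_x[\psi^{\varepsilon,i}(Z^\varepsilon_{K(\varepsilon)})]\to\mathbb E_{\mathfrak m_{j(x)}}[g(Y_t)]$ you are integrating the discontinuous step function $g=\sum_j a_j\boldsymbol\delta^j$ against a weakly converging sequence; this is legitimate because the discontinuity set $\{\mathfrak s_1,\dots,\mathfrak s_{n-1}\}$ is not charged by the limit law (supported on $\mathcal M$), so a Portmanteau/continuous-mapping argument applies — say so. Second, for $(1+h\varepsilon^\alpha\lambda^\varepsilon_i)^{K(\varepsilon)}\to\mathrm e^{t\lambda^{\mathbf Q}_i}$ you should note that the hypothesis that $\lambda^{\mathbf Q}_i$ is real and simple forces $\lambda^\varepsilon_i$ to be real for small $\varepsilon$ (otherwise its conjugate would be a second eigenvalue of the real matrix $\mathbf Q^\varepsilon$ converging to the same limit, contradicting simplicity via Theorem~\ref{theoremeigenwerte}). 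Third, the sign ambiguity is genuine but benign, exactly as you say: the theorem's normalization $\max_j|\psi^{\varepsilon,i}_{\mathfrak m_j}|=1$ does not fix the sign, and the paper's proof has the same implicit choice; fixing the sign of a distinguished component resolves it. Finally, the uniform tail bound $\sum_{y\neq Tx}p^\varepsilon_{x,y}=\mathcal O(\varepsilon^\alpha)$ that you flag as the "main obstacle" is in fact already supplied by Lemma~\ref{l:proptransitionprobabilities}~(ii) together with the uniform separation constant $D$ of \eqref{definitionD}, so that step is not a gap — just cite it.
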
 

\newpage
{
\begin{figure}[!ht]
\centerline{
 \input{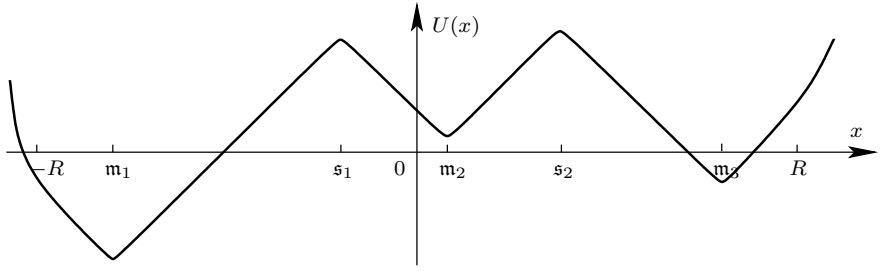} }
\caption{A potential $U$ with the minima $\mathfrak{m}_1=-4.015$, $\mathfrak{m}_2=0.468$, $\mathfrak{m}_3=3.966$
and the maxima $\mathfrak{s}_1=-1.034$, $\mathfrak{s}_2=1.921$; the range $R=5$.
\label{f:potential}} 
\end{figure}
\begin{figure}[!ht]
\centerline{ \input{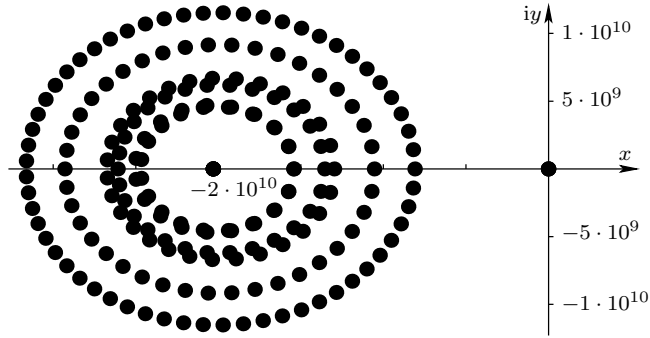} }
\caption{The spectrum of the matrix $\mathbf{Q}^\varepsilon$ on the complex plane. The top three eigenvalues are located in the neighbourhood
of the origin and well separated from the rest of the spectrum.
\label{f:evalues}} 
\end{figure}
\begin{figure}[!ht]
 \centerline{\input{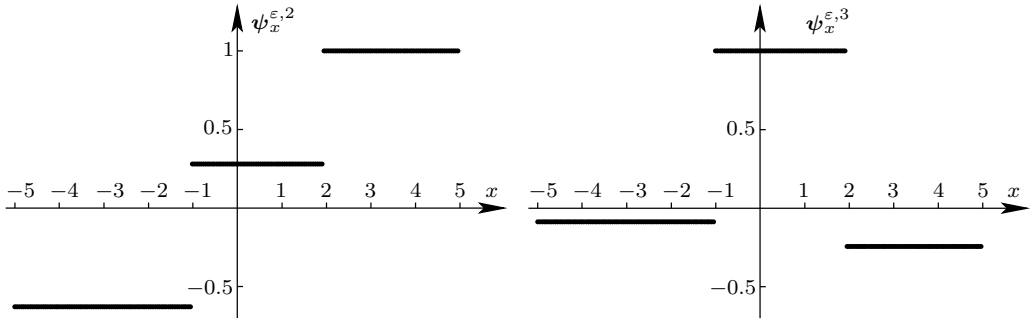} }
\caption{The eigenvectors $\boldsymbol{\psi}^{\e,2}$ and $\boldsymbol{\psi}^{\e,3}$ of the matrix 
$\mathbf{Q}^\varepsilon$ are almost constant over the domains of attraction of $U$.
\label{f:evectors}} 
\end{figure}}
\begin{exa}
Consider a three-well potential $U$ such that, for simplicity of simulations, 
$U'=\pm 1$ outside of small neighbourhoods of the extrema, see Fig.\ \ref{f:potential}, and fix $\alpha=1.8$. The matrix $\mathbf{Q}$ 
calculated according to \eqref{eintraegeQ} has the eigenvalues $\lambda^\mathbf{Q}_1=0$,
$\lambda^\mathbf{Q}_2=-0.124$,
$\lambda^\mathbf{Q}_3=-0.579$,
and the corresponding eigenvectors $\boldsymbol{\psi}^{\mathbf{Q},1}=(1,1,1)$,
$\boldsymbol{\psi}^{\mathbf{Q},2}=(-0.629, 0.280, 1)$,
$\boldsymbol{\psi}^{\mathbf{Q},3}=(-0.088, 1, -0.245)$. We construct the state space $\mathcal S$ consisting of $N=203$ points from the
interval $[-5,5)$ and a Markov chain with the matrix of transition probabilities defined according to \eqref{deftransprobs1},   
\eqref{deftransprobs2},  \eqref{deftransprobs3} with $h=10/N$ and $\varepsilon = 10^{-5}$. The spectrum of the matrix
$\mathbf{Q}^\varepsilon=\frac{1}{\varepsilon^\alpha h}( \mathbf{P}^\varepsilon-\mathbf{I}_N)$ consists of 203 eigenvalues, see
Fig.\ \ref{f:evalues}, and the top eigenvalues are $\lambda^\e_1$, $\lambda^\e_2$ and $\lambda^\e_3$,
coincide with the spectrum of $\mathbf{Q}$ with a four decimal places precision.
The eigenvector $\boldsymbol{\psi}^{\e,1}$ is constant on $\mathcal S$ whereas 
the eigenvectors $\boldsymbol{\psi}^{\e,2}$ and $\boldsymbol{\psi}^{\e,3}$ converge to the values 
$\boldsymbol{\psi}^{\mathbf{Q},2}_k$, $\boldsymbol{\psi}^{\mathbf{Q},3}_k$, $k=1,2,3$, on the parts of $\mathcal S$ corresponding to the 
domains of attraction of $U$, see Fig.\ \ref{f:evectors}.
\end{exa}

\newpage

 
\section{Eigenvalues. Proof of the Theorem \ref{theoremeigenwerte}}

Let $L_1$ by a symmetric $\alpha$-stable random variable with the characteristic function \eqref{eq:chf}. It is well-known 
(see \cite{Berg1952}) that $L_1$
has a density $p_\alpha(x)$ on $\mathbb R$ which can be expanded as $x\to+\infty$ as:
\begin{align}
p_{\alpha}(x)= \frac{\alpha}{2 x^{\alpha+1}}+\mathcal O\Big(\frac{1}{x^{2\alpha+1}}\Big).
\end{align}
In particular, for any fixed $0<a<b$ and $h>0$ there is $C>0$ such that in the limit $\e\to 0$
\begin{align}
\label{eq:exp}
\Big| \mathbb{P}(a\leq \e h^{\frac{1}{\alpha}}L_1 < b)
-\frac{\e^\alpha h}{2}\Big(\frac{1}{a^\alpha}-\frac{1}{b^\alpha}\Big)\Big|
\leq C\e^{2\alpha}. 
\end{align}
The next Lemma follows directly form the formulae \eqref{deftransprobs1}, 
\eqref{deftransprobs2} and  \eqref{deftransprobs3}.

\begin{lem}
\label{l:proptransitionprobabilities}
Let $\alpha\in(0,2)$ and the set $\mathcal S$ and the partition $[-R,R)=\bigsqcup_{x\in\mathcal S}[a_x,b_y)$ be fixed. 
Then there is $\e_0>0$ small enough and a constant 
$C=C(h,\delta,R,\alpha)>0$ such that for all $0<\varepsilon<\varepsilon_0$ the following estimates hold:
\begin{itemize}
 \item [(i)] for any $x\in\mathcal{S}$ and any $y\in\mathcal S$ such that $y\neq y^*(x)$ we have
    \begin{align}\label{estmatrix1}
      \left\vert p^\varepsilon_{x,y}-d_{x,y}\,\varepsilon^\alpha h\right\vert\leq C\varepsilon^{2\alpha}
    \end{align}
    where 
    \begin{align}
      d_{x,y}=
 \begin{cases}
   \displaystyle \frac{1}{2}\Big|\frac{1}{| a_{y}-x+hU'(x)|^\alpha}-\frac{1}{| b_{y}-x+hU'(x)|^\alpha}\Big|, &  y\notin \{\tilde{y}, \hat{y} , y^*(x)\},\\
   \displaystyle
               \frac{1}{2|b_{\tilde y}-x+hU'(x)|^{\alpha}}, & y=\tilde{y},\\
  \displaystyle             
  \frac{1}{2| a_{\hat y}-x+hU'(x)|^{\alpha}}, & y=\hat{y};
 \end{cases}
    \end{align}
 \item[(ii)] for $x\in\mathcal{S}$ and $y=y^*(x)$ we have
    \begin{align}\label{estmatrix2}
     | p^\varepsilon_{x,y^*(x)}- (1-f_{x,y^*(x)}\,\varepsilon^\alpha h)|\leq C\varepsilon^{2\alpha},
    \end{align}
    where 
     \begin{align}
     f_{x,y^*(x)}= \frac{1}{2}\Big(\frac{1}{| a_{y^*(x)}-x+hU'(x)|^\alpha}+\frac{1}{| b_{y^*(x)}-x+hU'(x)|^\alpha}\Big).
    \end{align}
\end{itemize}
\end{lem}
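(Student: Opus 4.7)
The plan is to apply the tail expansion \eqref{eq:exp} of the symmetric $\alpha$-stable density directly to each of the probabilities \eqref{deftransprobs1}--\eqref{deftransprobs3}, exploiting condition \eqref{eq:1} together with the geometry of the partition to ensure that the rescaled random variable is always evaluated on a set bounded away from zero.

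For part (i) in the interior case $y \notin \{\tilde y, \hat y, y^*(x)\}$, I would write
\begin{equation*}
p^\e_{x,y} = \mathbb P\bigl( u_1 \leq \e h^{1/\alpha} L_1 < u_2 \bigr), \quad u_1 = a_y - x + hU'(x),\ u_2 = b_y - x + hU'(x).
\end{equation*}
Since $y \neq y^*(x)$ means $x - hU'(x) \notin I_y$, the endpoints $u_1, u_2$ have the same sign and each have magnitude bounded away from zero. Applying \eqref{eq:exp} on the positive side directly, or on the negative side via the symmetry of $L_1$, yields $\frac{\e^\alpha h}{2}\bigl(|u_1|^{-\alpha} - |u_2|^{-\alpha}\bigr)$ up to $\mathcal O(\e^{2\alpha})$, which is precisely $d_{x,y} \e^\alpha h$ after the absolute value absorbs the sign. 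The boundary cases $y = \tilde y, \hat y$ reduce to one-sided tail probabilities $\mathbb P(\e h^{1/\alpha} L_1 \leq v)$ or $\mathbb P(\e h^{1/\alpha} L_1 \geq v)$ with $|v|$ bounded below (again by \eqref{eq:1}), and the analogous one-sided form of \eqref{eq:exp} produces the single-term formulas.

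For part (ii), the point $x - hU'(x)$ lies in $I_{y^*(x)}$, so I would rewrite the probability as the complement of two tails,
\begin{equation*}
p^\e_{x, y^*(x)} = 1 - \mathbb P\bigl(\e h^{1/\alpha} L_1 < a_{y^*(x)} - x + hU'(x)\bigr) - \mathbb P\bigl(\e h^{1/\alpha} L_1 \geq b_{y^*(x)} - x + hU'(x)\bigr),
\end{equation*}
in which both arguments have definite nonzero sign and magnitude bounded below by the partition width. The one-sided form of \eqref{eq:exp} applies to each term, and summing reproduces the claimed $1 - f_{x, y^*(x)} \e^\alpha h + \mathcal O(\e^{2\alpha})$.

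The only mildly delicate point is making the $\mathcal O(\e^{2\alpha})$ error \emph{uniform} in $x, y \in \mathcal S$. Since $\mathcal S$ is finite, the relevant distances $|a_y - x + hU'(x)|$ and $|b_y - x + hU'(x)|$ (taken over all pairs for which they do not vanish, which is generic in the partition construction since one can always arrange that no iterate $x - hU'(x)$ lies on an endpoint) are uniformly bounded below by a positive constant depending only on $h, \delta, R$. Hence a single $C = C(h, \delta, R, \alpha)$ works, and I do not anticipate any real obstacle beyond this bookkeeping.
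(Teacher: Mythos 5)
Your argument is exactly the paper's intended one: the paper offers no written proof, stating only that the lemma ``follows directly'' from the definitions \eqref{deftransprobs1}--\eqref{deftransprobs3} together with the tail expansion \eqref{eq:exp}, and your computation (two-sided expansion for interior $y$, one-sided tails for $\tilde y,\hat y$, complement of two tails for $y^*(x)$, uniformity from finiteness of $\mathcal S$) is precisely that omitted calculation. The only caveat, which you already flag, is that the partition must be such that no image $x-hU'(x)$ coincides with an interval endpoint, since otherwise some $d_{x,y}$ would be infinite; the paper tacitly assumes this as well.
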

\noindent

\subsection{Proof of Theorem \ref{theoremeigenwerte}}
In the first step we will analyse the structure of the matrix
\begin{align}\label{eigenwertmatrix}
 \mathbf{A}^\varepsilon(\lambda)\coloneqq \mathbf{Q}^\varepsilon-\lambda\mathbf{I}_{N}
=\frac{1}{\varepsilon^\alpha h}\left(\mathbf{P}^\varepsilon-\mathbf{I}_{N}\right)-\lambda\mathbf{I}_{N} 
 ,\quad \lambda\in\mathbb{C},
\end{align}
which obviously plays the essential role in the investigation of the spectrum of $\mathbf{Q}^\varepsilon$.

First we observe that the matrices $\mathbf{P}^\varepsilon$ and $\mathbf{A}^\varepsilon(\lambda)$ possess a block structure. 
Indeed, recalling the decomposition of the state space $\mathcal{S}=\mathcal{S}_1\sqcup\ldots\sqcup\mathcal{S}_n$,
$\mathcal S_i=\mathcal S\cap [\mathfrak{s}_{i-1}, \mathfrak{s}_i]$, $i=1,\dots, n$,
as a disjoint union of states belonging to different potential wells, we may write
\begin{align}
\mathbf{P}^\varepsilon=
 \begin{pmatrix}
  \mathbf{P}^\varepsilon_{\mathcal{S}_1,\mathcal{S}_1} &\vline	 &\cdots	 &\vline	 & \mathbf{P}^\varepsilon_{\mathcal{S}_1,\mathcal{S}_n}\\\hline
  \vdots 			 &\vline	 &	         &\vline  	 &\vdots\\\hline
  \mathbf{P}^\varepsilon_{\mathcal{S}_n,\mathcal{S}_1} &\vline 	 &\cdots 	 &\vline	 & \mathbf{P}^\varepsilon_{\mathcal{S}_n,\mathcal{S}_n}
  \end{pmatrix},
\quad 
\mathbf{A}^\varepsilon(\lambda)=
 \begin{pmatrix}
  \mathbf{A}^\varepsilon_{\mathcal{S}_1,\mathcal{S}_1}(\lambda) &\vline	 &\cdots	 &\vline	 & \mathbf{A}^\varepsilon_{\mathcal{S}_1,\mathcal{S}_n}\\\hline
  \vdots 			 &\vline	 &	         &\vline  	 &\vdots\\\hline
  \mathbf{A}^\varepsilon_{\mathcal{S}_n,\mathcal{S}_1} &\vline 	 &\cdots 	 &\vline	 & \mathbf{A}^\varepsilon_{\mathcal{S}_n,\mathcal{S}_n}(\lambda)
  \end{pmatrix}.
\end{align}
The blocks $\mathbf{P}^\varepsilon_{\mathcal{S}_i,\mathcal{S}_j}$ of $\mathbf{P}^\e$ determine the transitions of the Markov chain
$Z^\e$
between the wells $\mathcal{S}_i$ and $\mathcal{S}_j$. Only the blocks $\mathbf{A}^\varepsilon_{\mathcal{S}_i,\mathcal{S}_i}(\lambda)$, 
$i=1,\dots,n$, 
that include the main diagonal depend on $\lambda$.

For the further analysis of the matrix $\mathbf{A}^\varepsilon(\lambda)$ it is helpful to study the limit behaviour of its entries
$a_{x,y}^\e(\lambda)$ as $\e\to 0$.

The following expansions follow immediately from Lemma \ref{l:proptransitionprobabilities}. 
\begin{lem}\label{ordnungeintraege} 
 Let $\lambda\in\mathbb C$, then
\begin{itemize}
 \item[(i)] 	for $x,y\in\mathcal{S}$ and $y\notin\left\{x,y^*(x)\right\}$ we have $a^\varepsilon_{x,y}=d_{x,y}(1+\mathcal O(\e^\alpha))$;
 \item[(ii)] 	for $x\in\mathcal{S}\setminus\mathcal{M}$ and $y=y^*(x)$ 
 we have $a^\varepsilon_{x,y^*(x)}(\lambda)=\mathcal O(\varepsilon^{-\alpha})$, 
more precisely, 
\begin{align}
a^\varepsilon_{x,y^*(x)}(\lambda)=\frac{1}{\varepsilon^\alpha h}-\tilde{a}^\varepsilon_{x,y^*(x)}
\quad \text{where}\quad \tilde{a}^\varepsilon_{x,y^*(x)}=f_{x,y^*(x)}(1+\mathcal O(\e^\alpha));
\end{align}
 \item[(iii)]  	for $x\in\mathcal{S}\setminus\mathcal{M}$ we have  $a^\varepsilon_{x,x}(\lambda)=\mathcal O(\varepsilon^{-\alpha}$),
more precisely,  
\begin{align}
a^\varepsilon_{x,x}(\lambda)=-\frac{1}{\varepsilon^\alpha h}-\lambda+\tilde{a}^\varepsilon_{x,x}
\quad \text{where}\quad \tilde{a}^\varepsilon_{x,x}=d_{x,x}(1+\mathcal O(\e^\alpha));
\end{align}
 \item[(iv)]	
 for $x\in\mathcal{M}$ we have $a^\varepsilon_{x,x}=\mathcal O(1)$, more precisely, 
 \begin{align} a^\varepsilon_{x,x}=-\tilde{a}^\varepsilon_{x,x}-\lambda
 \quad\text{where}\quad\tilde{a}^\varepsilon_{x,x}=f_{x,x}(1+\mathcal O(\e^\alpha)).
 \end{align}
\end{itemize}
As in Lemma \ref{l:proptransitionprobabilities}, 
the bounds hidden in the Landau symbols $\mathcal O$ are uniform over $x,y\in\mathcal S$.
\end{lem}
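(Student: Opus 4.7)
The proof is a direct substitution from Lemma \ref{l:proptransitionprobabilities} into the matrix $\mathbf A^\varepsilon(\lambda)$ followed by case analysis. The single algebraic identity I need is
\begin{equation*}
a^\varepsilon_{x,y}(\lambda) = \frac{1}{\varepsilon^\alpha h}\bigl(p^\varepsilon_{x,y} - \mathbf 1_{\{x=y\}}\bigr) - \lambda\,\mathbf 1_{\{x=y\}},
\end{equation*}
which follows immediately from the definition \eqref{eigenwertmatrix} and the structure of $-\lambda \mathbf I_N$. All four cases then reduce to inserting the appropriate expansion from Lemma \ref{l:proptransitionprobabilities} and dividing by $\varepsilon^\alpha h$.

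For case (i), where $y \notin \{x, y^*(x)\}$, I would plug in the estimate \eqref{estmatrix1} to obtain $a^\varepsilon_{x,y} = d_{x,y} + \mathcal O(\varepsilon^\alpha)$. To rewrite this in the multiplicative form $d_{x,y}(1+\mathcal O(\varepsilon^\alpha))$ it is necessary to check that $d_{x,y}$ is bounded away from zero; this holds because $x - hU'(x) \notin I_y$ by construction, so the distances appearing in the denominators of $d_{x,y}$ stay bounded by quantities depending only on $h, \delta, R, \alpha$. Cases (ii) and (iv) both rest on the expansion \eqref{estmatrix2} with $y = y^*(x)$, and differ only in whether $y^*(x) = x$. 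In case (ii), where $x \notin \mathcal M$ and hence $y^*(x) \neq x$, the Kronecker contribution vanishes and division by $\varepsilon^\alpha h$ produces $(\varepsilon^\alpha h)^{-1} - f_{x,y^*(x)} + \mathcal O(\varepsilon^\alpha)$; in case (iv), where $x \in \mathcal M$ and so $y^*(x) = x$, the singular term $(\varepsilon^\alpha h)^{-1}$ cancels exactly against the $-(\varepsilon^\alpha h)^{-1}$ coming from the Kronecker $-1$, and one is left with the order-one expression $-f_{x,x} - \lambda + \mathcal O(\varepsilon^\alpha)$.

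Case (iii) is the only one requiring a small additional remark: although it concerns a diagonal entry, condition \eqref{eq:1} ensures $y^*(x) \neq x$ whenever $x \in \mathcal S \setminus \mathcal M$, so it is Lemma \ref{l:proptransitionprobabilities}(i), and not (ii), that applies to the pair $(x,x)$. This gives $p^\varepsilon_{x,x} = d_{x,x}\varepsilon^\alpha h + \mathcal O(\varepsilon^{2\alpha})$, which combined with the Kronecker contribution $-(\varepsilon^\alpha h)^{-1}$ and the explicit $-\lambda$ from the identity matrix yields exactly the stated expansion. Uniformity of the $\mathcal O$-bounds over $\mathcal S \times \mathcal S$ is inherited directly from the uniform constant $C(h,\delta,R,\alpha)$ of Lemma \ref{l:proptransitionprobabilities}, together with the finiteness of $\mathcal S$. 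There is no substantive obstacle: the argument is purely bookkeeping, and the only point that needs genuine care is the dichotomy in case (iii) that dictates which branch of Lemma \ref{l:proptransitionprobabilities} is invoked.
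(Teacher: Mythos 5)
Your proposal is correct and fills in precisely the bookkeeping the paper treats as immediate: you identify the expansion $a^\varepsilon_{x,y}(\lambda) = (\varepsilon^\alpha h)^{-1}(p^\varepsilon_{x,y} - \mathbf 1_{\{x=y\}}) - \lambda\,\mathbf 1_{\{x=y\}}$ and then substitute the appropriate branch of Lemma~\ref{l:proptransitionprobabilities}, correctly noting that case (iii) invokes part (i) of that lemma (since $y^*(x)\neq x$ off $\mathcal M$) while case (iv) invokes part (ii), and that the multiplicative error form relies on $d_{x,y}$ and $f_{x,y^*(x)}$ being bounded away from zero uniformly over the finite set $\mathcal S$. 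This is exactly the argument the paper leaves unstated, so no substantive comparison is needed.
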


To get a better understanding of the structure of $\mathbf A^\e(\lambda)$,  let us  for example take a 
detailed look at the block $ \mathbf{A}^\varepsilon_{\mathcal{S}_1,\mathcal{S}_1}(\lambda)$. 
If we number the states of $\mathcal S_1$ in the increasing order by $x_1,\ldots, x_{N(1)}$,  
then this block has the form
\begin{align}
 \begin{pmatrix}
  -\frac{1}{\varepsilon^\alpha h}-\lambda+\tilde{a}^\varepsilon_{x_1,x_1}	& \cdots 	& \frac{1}{\varepsilon^\alpha h}-\tilde{a}^\varepsilon_{x_1,y^*(x_1)}  	&\cdots 	& a^\varepsilon_{x_1,\mathfrak{m}_1} 				& \cdots 	&a^\varepsilon_{x_1,x_{N(1)}}\\
  \vdots 							     	&       	& \vdots        							&  		& \cdots      					&           	&\vdots\\
 a^\varepsilon_{\mathfrak{m}_1,x_1}							&		& a^\varepsilon_{\mathfrak{m}_1,y^*(x_1)}						&		&-\tilde{a}^\varepsilon_{\mathfrak{m}_1,\mathfrak{m}_1}-\lambda	&		&a^\varepsilon_{\mathfrak{m}_1,x_{N(1)}}\\
\vdots									&		& \vdots								&								& \cdots					&		&\vdots\\
a^\varepsilon_{x_{N(1)},x_1}  							&		&a^\varepsilon_{x_{N(1)},y^*(x_1)}						&		&a^\varepsilon_{x_{N(1)},\mathfrak{m}_1}				&\cdots		&-\frac{1}{\varepsilon^\alpha h}-\lambda+\tilde{a}^\varepsilon_{x_{N(1)},x_{N(1)}}\\      
 \end{pmatrix}
\end{align}
Note that the numbers $\tilde{a}^\varepsilon_{x,y}$ that appear in the previous lemma are all positive.

Let us establish the connection between the matrix 
$\frac{1}{\varepsilon^\alpha h}(\mathbf{P}^\varepsilon-\mathbf{I_N})$ and the matrix $\mathbf{Q}$ of the limit Markov chain appearing in the
Theorems \ref{th:1} and \ref{th:2}.

\begin{lem}\label{approxqij}
Let $\mathbf{Q}=(q_{i,j})_{i,j=1}^n$ be the generator of the Markov chain $Y$ on the state space $\mathcal M$ defined in
\eqref{eintraegeQ}. 
Then there exist constants $\hat{C}=\hat{C}(\delta, h, R,\alpha)$ and 
 $\varepsilon_0>0$ such that for every $0<\varepsilon<\varepsilon_0$, 
every
$1\leq i,j\leq n$, $i\neq j$, 
 \begin{align}
  \Big| q_{i,j}-\frac{1}{\varepsilon^\alpha h}\sum_{y\in\mathcal{S}_j}p^\varepsilon_{\mathfrak{m}_i,y}\Big|
  \leq \hat{C}\varepsilon^\alpha
 \end{align}
 and 
\begin{align}
  \Big| q_{i,i}-\frac{1}{\varepsilon^\alpha h}\sum_{y\in\mathcal{S}_j}(p^\varepsilon_{\mathfrak{m}_i,y}-\delta_{\mathfrak{m_i}}^y)\Big|
  \leq \hat{C}\varepsilon^\alpha.
\end{align}
\end{lem}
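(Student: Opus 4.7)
The plan is to reduce both estimates to the single-entry expansion already supplied by Lemma \ref{l:proptransitionprobabilities} and then exploit the telescoping structure of the coefficients $d_{\mathfrak{m}_i,y}$. Since $\mathfrak{m}_i\in\mathcal{M}$ is a local minimum, we have $U'(\mathfrak{m}_i)=0$, so the argument of the exponential in \eqref{deftransprobs1}--\eqref{deftransprobs3} simplifies to $\mathfrak{m}_i+\varepsilon h^{1/\alpha}L_1$, and moreover $y^*(\mathfrak{m}_i)=\mathfrak{m}_i$. Consequently Lemma \ref{l:proptransitionprobabilities}(i) applies to every $y\in\mathcal{S}\setminus\{\mathfrak{m}_i\}$ and gives
\begin{equation*}
p^\varepsilon_{\mathfrak{m}_i,y}=d_{\mathfrak{m}_i,y}\,\varepsilon^\alpha h+\mathcal O(\varepsilon^{2\alpha}),
\end{equation*}
where the coefficients $d_{\mathfrak{m}_i,y}$ have the explicit form stated in the lemma and $U'(\mathfrak{m}_i)=0$ has been used.

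For $i\neq j$ I would fix $j$ and enumerate $\mathcal{S}_j=\{y_1<\dots<y_{N(j)}\}$ so that the intervals $[a_{y_k},b_{y_k})$ tile $[\mathfrak{s}_{j-1},\mathfrak{s}_j)$ with $a_{y_{k+1}}=b_{y_k}$. Because $\mathfrak{m}_i$ lies in a different well, the target set is bounded away from $\mathfrak{m}_i$, so on each interior $y_k$ the absolute value in $d_{\mathfrak{m}_i,y_k}$ can be removed with a consistent sign, and summation produces a telescoping expression. Combined with the boundary terms $d_{\mathfrak{m}_i,\tilde y}$ and $d_{\mathfrak{m}_i,\hat y}$ from Lemma \ref{l:proptransitionprobabilities}(i) (which carry only one endpoint precisely because the reflection captures the remaining tail), the sum collapses to
\begin{equation*}
\sum_{y\in\mathcal{S}_j}d_{\mathfrak{m}_i,y}=\frac{1}{2}\Big|\frac{1}{|\mathfrak{s}_{j-1}-\mathfrak{m}_i|^\alpha}-\frac{1}{|\mathfrak{s}_j-\mathfrak{m}_i|^\alpha}\Big|=q_{i,j},
\end{equation*}
with the convention $1/|\mathfrak{s}_0-\mathfrak{m}_i|^\alpha=1/|\mathfrak{s}_n-\mathfrak{m}_i|^\alpha=0$ reproduced automatically by the half-line formulas of $d_{\mathfrak{m}_i,\tilde y}$ and $d_{\mathfrak{m}_i,\hat y}$. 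Accumulating the $\mathcal O(\varepsilon^{2\alpha})$ error over at most $N=|\mathcal{S}|$ states and dividing by $\varepsilon^\alpha h$ yields the claimed bound $\hat C\varepsilon^\alpha$ with $\hat C$ depending only on $h,\delta,R,\alpha$.

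For the diagonal entry I would avoid any new computation and use the stochasticity of $\mathbf{P}^\varepsilon$, $\sum_{y\in\mathcal{S}}p^\varepsilon_{\mathfrak{m}_i,y}=1$, together with the conservativity of $\mathbf{Q}$, $\sum_{j=1}^n q_{i,j}=0$. Writing
\begin{equation*}
\frac{1}{\varepsilon^\alpha h}\Big(\sum_{y\in\mathcal{S}_i}p^\varepsilon_{\mathfrak{m}_i,y}-1\Big)=-\sum_{j\neq i}\frac{1}{\varepsilon^\alpha h}\sum_{y\in\mathcal{S}_j}p^\varepsilon_{\mathfrak{m}_i,y}
\end{equation*}
and inserting the estimate from the previous paragraph for each $j\neq i$ gives $q_{i,i}$ up to an error $(n-1)\hat C\varepsilon^\alpha$, which is of the desired form after adjusting $\hat C$.

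The main technical obstacle is the bookkeeping at the two boundary wells $j=1$ and $j=n$, where the reflection boundary conditions \eqref{deftransprobs2}--\eqref{deftransprobs3} put the full left or right tail of $\varepsilon h^{1/\alpha}L_1$ into a single interval. One must verify that, after telescoping, these tail contributions combine with the interior telescoping remainder to yield exactly $\tfrac{1}{2}|\mathfrak{s}_1-\mathfrak{m}_i|^{-\alpha}$ (respectively $\tfrac{1}{2}|\mathfrak{s}_{n-1}-\mathfrak{m}_i|^{-\alpha}$), matching the $\mathfrak{s}_0=-\infty,\mathfrak{s}_n=+\infty$ convention in \eqref{eintraegeQ}. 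This is the reason the Markov chain is constructed with reflection at $\pm R$ in the first place, and it is precisely what makes the argument work without further conditions on $R$.
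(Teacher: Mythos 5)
Your proof is correct and arrives at the same conclusion, but by a slightly different route from the paper. The paper does not expand each $p^\varepsilon_{\mathfrak{m}_i,y}$ individually and then telescope the resulting coefficients $d_{\mathfrak{m}_i,y}$; instead it observes that, because $U'(\mathfrak{m}_i)=0$ and the intervals $I_y$, $y\in\mathcal{S}_j$, tile $[\mathfrak{s}_{j-1},\mathfrak{s}_j)$, the $\sigma$-additivity of probability collapses the whole sum $\sum_{y\in\mathcal{S}_j}p^\varepsilon_{\mathfrak{m}_i,y}$ into the \emph{single} probability $\mathbb{P}\big(\mathfrak{m}_i+\varepsilon h^{1/\alpha}L_1\in[\mathfrak{s}_{j-1},\mathfrak{s}_j)\big)$, and then applies the two-term tail expansion \eqref{eq:exp} exactly once (and likewise for the diagonal entry, where the sum collapses to the probability of leaving $[\mathfrak{s}_{i-1},\mathfrak{s}_i)$). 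Applying \eqref{eq:exp} a single time gives an error $\mathcal O(\varepsilon^{2\alpha})$ with a constant that manifestly does not depend on $|\mathcal{S}|$, whereas your term-by-term application of Lemma \ref{l:proptransitionprobabilities}(i) accumulates an error $\mathcal O(N\varepsilon^{2\alpha})$. Since the partition $\mathcal{S}$ is fixed once and for all, $N$ is a constant and your bound is still of the admissible form, so this is a cosmetic weakening rather than a gap; a more careful reader would note that the second-order error terms in the expansion of $p_\alpha$ also telescope over adjacent intervals, so the $N$-dependence can be removed from your argument as well. Your treatment of the diagonal entry via the row-sum identities $\sum_y p^\varepsilon_{\mathfrak{m}_i,y}=1$ and $\sum_j q_{i,j}=0$ is an equivalent bookkeeping of the same identity the paper writes as passing to the complement event, and it is correct. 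Finally, your check that the one-sided coefficients $d_{\mathfrak{m}_i,\tilde y}$ and $d_{\mathfrak{m}_i,\hat y}$ reproduce the $\mathfrak{s}_0=-\infty$, $\mathfrak{s}_n=+\infty$ convention in \eqref{eintraegeQ} is exactly the right thing to verify and matches what the paper does implicitly for $j=n$.
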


\begin{proof}
Let for definiteness $1\leq i<j<n$. Then according to \eqref{deftransprobs1}, \eqref{eq:exp} and \eqref{eintraegeQ}
\begin{equation}
\begin{aligned}
\frac{1}{\varepsilon^\alpha h}\sum_{y\in\mathcal{S}_j}p^\varepsilon_{\mathfrak{m}_i,y}
&=\frac{1}{\varepsilon^\alpha h}
\sum_{y\in\mathcal{S}_j}\mathbb P\Big(\mathfrak{m}_i -hU'(\mathfrak{m}_i)+\e^{\frac{1}{\alpha}} h L_1 \in I_y\Big)\\
&=\frac{1}{\varepsilon^\alpha h}
\mathbb P\Big(\mathfrak{m}_i +\e^{\frac{1}{\alpha}} h L_1 \in [\mathfrak{s}_{j-1} , \mathfrak{s}_j )   \Big)\\
&=\frac{1}{2}\Big(\frac{1}{(\mathfrak{s}_{j-1}-\mathfrak{m}_{i})^\alpha}-\frac{1}{(\mathfrak{s}_{j}-\mathfrak{m}_{i})^\alpha}\Big)
+\mathcal O(\e^\alpha)
=
 q_{i,j}+\mathcal O(\e^\alpha).
\end{aligned}
\end{equation}
Analogously, for $j=n$, with the help of \eqref{deftransprobs3}, \eqref{eq:exp} and \eqref{eintraegeQ} we get
\begin{equation}
\begin{aligned}
\frac{1}{\varepsilon^\alpha h}\sum_{y\in\mathcal{S}_n}p^\varepsilon_{\mathfrak{m}_i,y}
&=\frac{1}{\varepsilon^\alpha h}
\sum_{y\in\mathcal{S}_n}\mathbb P\Big(\mathfrak{m}_i -hU'(\mathfrak{m}_i)+\e^{\frac{1}{\alpha}} h L_1 \in I_y\Big)\\
&=\frac{1}{\varepsilon^\alpha h}
\mathbb P\Big(\mathfrak{m}_i +\e^{\frac{1}{\alpha}} h L_1 \in [\mathfrak{s}_{n-1} , +\infty)   \Big)\\
&=\frac{1}{2}\frac{1}{(\mathfrak{s}_{j-1}-\mathfrak{m}_{i})^\alpha} +\mathcal O(\e^\alpha)
= q_{i,n}+\mathcal O(\e^\alpha),
\end{aligned}
\end{equation}
and for $i=j$
\begin{equation}
\begin{aligned} 
\frac{1}{\varepsilon^\alpha h} \sum_{y\in\mathcal{S}_i}\Big( p^\varepsilon_{\mathfrak{m}_i,y}-\delta_{\mathfrak{m_i}}^y\Big)
&=\frac{1}{\varepsilon^\alpha h}
\Big(-1+\sum_{y\in\mathcal{S}_i}\mathbb P\Big(\mathfrak{m}_i -hU'(\mathfrak{m}_i)+\e^{\frac{1}{\alpha}} h L_1 \in I_y\Big)\Big)\\
&=-\frac{1}{\varepsilon^\alpha h} 
\mathbb P\Big( \mathfrak{m}_i +\e^{\frac{1}{\alpha}} h L_1 \notin [\mathfrak{s}_{i-1} , \mathfrak{s}_i )  \Big)\\
&=-\frac{1}{2}\Big(\frac{1}{(\mathfrak{m}_{i}-\mathfrak{s}_{i-1})^\alpha}+\frac{1}{(\mathfrak{s}_{j}-\mathfrak{m}_{i})^\alpha}\Big)
+\mathcal O(\e^\alpha)
=
 q_{i,i}+\mathcal O(\e^\alpha).
\end{aligned} 
\end{equation}
\end{proof}

Now let us come to the investigation of the eigenvalues. We start with a well known result.

\begin{prp}\label{locationeigenvalues}
 All eigenvalues of $\mathbf{Q}$ and $\mathbf{Q}^\varepsilon$ have non-positive real parts.
\end{prp}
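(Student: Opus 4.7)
The plan is to treat the two matrices separately, but in each case to use Gershgorin's circle theorem together with the conservation property (row sums) of the underlying stochastic/generator structure.

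First I would handle $\mathbf{Q}$. By construction \eqref{eintraegeQ}, $\mathbf{Q}$ is a genuine generator of a continuous-time Markov chain: the off-diagonal entries $q_{i,j}$ are non-negative, the diagonal entries $q_{i,i}$ are non-positive, and each row sums to zero, so $\sum_{j\neq i}|q_{i,j}| = -q_{i,i}$. Gershgorin's disc theorem then tells us that every eigenvalue $\lambda$ of $\mathbf{Q}$ lies in some closed disc
\begin{equation*}
\{z\in\mathbb{C}\colon |z - q_{i,i}|\leq -q_{i,i}\},
\end{equation*}
which is contained in the closed left half-plane $\{\mathrm{Re}\,z\leq 0\}$. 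Hence $\mathrm{Re}\,\lambda\leq 0$.

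For $\mathbf{Q}^\varepsilon$, the cleanest route is to exploit the identity $\mathbf{Q}^\varepsilon = \frac{1}{h\varepsilon^\alpha}(\mathbf{P}^\varepsilon-\mathbf{I}_N)$. The matrix $\mathbf{P}^\varepsilon$ is by construction row-stochastic (the entries \eqref{deftransprobs1}--\eqref{deftransprobs3} are non-negative probabilities of a disjoint exhaustive partition of the real line into intervals $I_y$ together with the two half-lines absorbed into $I_{\tilde y}$ and $I_{\hat y}$). Consequently every eigenvalue $\mu$ of $\mathbf{P}^\varepsilon$ satisfies $|\mu|\leq 1$, and each eigenvalue of $\mathbf{P}^\varepsilon-\mathbf{I}_N$ has the form $\mu - 1$ with $|\mu|\leq 1$, which forces $\mathrm{Re}(\mu - 1)\leq 0$. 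Dividing by the positive scalar $h\varepsilon^\alpha$ preserves the sign of the real part, so all eigenvalues $\lambda^\varepsilon_i$ of $\mathbf{Q}^\varepsilon$ satisfy $\mathrm{Re}\,\lambda^\varepsilon_i\leq 0$.

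There is essentially no obstacle here: the proof reduces to two standard facts (Gershgorin, or equivalently the Perron--Frobenius bound $|\mu|\leq 1$ for stochastic matrices) together with the row-sum zero property verified directly from the defining formulae. The only small check worth mentioning explicitly is that the partition of $\mathbb{R}$ used in \eqref{deftransprobs1}--\eqref{deftransprobs3} does cover the whole line, so that $\sum_{y\in\mathcal S}p^\varepsilon_{x,y}=1$ for every $x\in\mathcal S$; this is immediate from the way the boundary intervals $I_{\tilde y}$ and $I_{\hat y}$ are extended to $(-\infty,b_{\tilde y}]$ and $[a_{\hat y},\infty)$ in \eqref{deftransprobs2}--\eqref{deftransprobs3}.
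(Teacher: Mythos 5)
Your proof is correct. For $\mathbf{Q}$ you argue exactly as the paper does, via Gershgorin applied directly to the generator. For $\mathbf{Q}^\varepsilon$ the paper also applies Gershgorin directly to $\mathbf{Q}^\varepsilon = \frac{1}{h\varepsilon^\alpha}(\mathbf{P}^\varepsilon - \mathbf{I}_N)$, noting that $q^\varepsilon_{x,x} = \frac{1}{h\varepsilon^\alpha}(p^\varepsilon_{x,x}-1) < 0$ and $\sum_{y\neq x} q^\varepsilon_{x,y} = -q^\varepsilon_{x,x}$, so the Gershgorin discs again sit in the closed left half-plane touching the origin. You instead go through the spectral-radius bound $|\mu|\leq 1$ for the row-stochastic matrix $\mathbf{P}^\varepsilon$, then observe that $\lambda = \frac{1}{h\varepsilon^\alpha}(\mu-1)$ has non-positive real part. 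The two are essentially the same fact in different clothing (the stochastic-matrix spectral-radius bound is itself a corollary of Gershgorin), and your explicit check that $\sum_y p^\varepsilon_{x,y}=1$ because the intervals $I_y$ together with the two extended boundary half-lines exhaust $\mathbb{R}$ is exactly the observation needed to make either argument run. No gap.
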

\begin{proof} This result follows from the Gershgorin circle theorem 
(see, for example, Theorem 1.11 in \cite{varga2009}). Indeed, since  $q_{i,i}<0$ for all $1\leq i\leq n$,
and $q^\varepsilon_{x,x}<0$ for all $x\in\mathcal{S}$, as well as
\begin{align}
 \sum_{j\neq i}q_{i,j}=-q_{i,i}\quad\text{and}\quad \sum_{y\neq x}q^\varepsilon_{x,y}=-q^\varepsilon_{x,x}
\end{align}
all the Gershgorin circles are located in the negative complex half plane 
and touch the origin. In particular the principal eigenvalues equal to zero, $\lambda^\varepsilon_1=\lambda^{\mathbf{Q}}_1=0$.
\end{proof}

The proof of Theorem \ref{theoremeigenwerte} mainly consists in the investigation of the asymptotics of the characteristic polynomial
$P^\varepsilon(\lambda)\coloneqq \det(\mathbf{A}^\varepsilon(\lambda))$ in the limit $\e\to 0$. 
We will show that, after an appropriate scaling, it converges uniformly on a sufficiently
large disc to the characteristic polynomial
$P^{\mathbf{Q}}(\lambda)\coloneqq \det(\mathbf{A}^{\mathbf{Q}}(\lambda))$ 
where $\mathbf{A}^{\mathbf{Q}}(\lambda)\coloneqq \mathbf{Q}-\lambda\mathbf{I}_n$.

\begin{prp}\label{proppolynome} 
There exist constants $\varepsilon_0>0$, and $C=C(h,\delta,\alpha,R)>1$
such that for every 
$0<\varepsilon<\varepsilon_0$
and $\lambda\in\mathbb{C}$
\begin{align}
 \Big| (-\varepsilon^\alpha h)^{N-n}P^\varepsilon(\lambda)-P^\mathbf{Q}(\lambda)\Big|
 \leq C(|\lambda|+ C)^{n+1}\varepsilon^\alpha.
\end{align}
\end{prp}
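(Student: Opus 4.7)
The plan is to perform determinant-preserving column operations on $\mathbf{A}^\varepsilon(\lambda)=\mathbf{Q}^\varepsilon-\lambda\mathbf{I}_N$ that collapse the transient states $\mathcal{S}^c:=\mathcal{S}\setminus\mathcal{M}$ onto the metastable set $\mathcal{M}$, reducing the determinant calculation, via a Schur complement, to an $n\times n$ problem whose leading order is $P^{\mathbf{Q}}(\lambda)$. Concretely, I order the states of $\mathcal{S}^c$ in reverse topological order of $T$ (leaves of the $T$-forest first) and in this order add column $x$ to column $Tx$ for each $x\in\mathcal{S}^c$. These operations amount to right-multiplication by a unimodular matrix $\mathbf{M}$ with entries $\mathbf{M}_{z,y}=\mathbf{1}_{z\in B_y}$, where $B_y$ denotes the $T$-subtree rooted at $y$ (in particular $B_{\mathfrak{m}_i}=\mathcal{S}_i$), so that $\det(\mathbf{A}^\varepsilon(\lambda)\mathbf{M})=P^\varepsilon(\lambda)$.

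A direct computation shows that the transformed entries are $\tilde{\mathbf{A}}^\varepsilon_{x,y}(\lambda)=\sum_{z\in B_y}q^\varepsilon_{x,z}-\lambda\,\mathbf{1}_{x\in B_y}$. The decisive observation is that in every such sum the two large entries $q^\varepsilon_{x,x}\sim-1/(\varepsilon^\alpha h)$ and $q^\varepsilon_{x,Tx}\sim+1/(\varepsilon^\alpha h)$ either both contribute or neither contributes, because $x$ and $Tx$ always lie in the same basin $\mathcal{S}_j$; the single exception is $y=x\in\mathcal{S}^c$, where $Tx\notin B_x$ so only the diagonal term $-1/(\varepsilon^\alpha h)$ survives. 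Invoking Lemma \ref{approxqij} at $y=\mathfrak{m}_j$ (where $B_y=\mathcal{S}_j$) to identify the $\mathcal{M}\times\mathcal{M}$ block, one obtains the decomposition
\[
\tilde{\mathbf{A}}^\varepsilon(\lambda)=\begin{pmatrix}
\mathbf{Q}-\lambda\mathbf{I}_n+\mathcal O(\varepsilon^\alpha) & \mathcal O(1)\\
\mathcal O(|\lambda|+1) & -\tfrac{1}{\varepsilon^\alpha h}\mathbf{I}_{N-n}+\mathcal O(|\lambda|+1)
\end{pmatrix},
\]
with blocks indexed by $\mathcal{M}$ and $\mathcal{S}^c$ and all remainders uniform entrywise.

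Applying the Schur complement finishes the argument. In the regime $\varepsilon^\alpha(|\lambda|+1)\le c_0$ for a small $c_0>0$ (the only regime in which the claimed bound is nontrivial; otherwise one enlarges $C$ so that crude polynomial bounds on $P^\varepsilon$ and $P^{\mathbf{Q}}$ trivialize the inequality) the lower-right block $D$ is invertible by a Neumann series, giving $D^{-1}=-\varepsilon^\alpha h\,\mathbf{I}_{N-n}+\mathcal O(\varepsilon^{2\alpha}(|\lambda|+1))$ and $(-\varepsilon^\alpha h)^{N-n}\det D=1+\mathcal O(\varepsilon^\alpha(|\lambda|+1))$. The Schur complement reads $A-BD^{-1}C=\mathbf{Q}-\lambda\mathbf{I}_n+\mathcal O(\varepsilon^\alpha(|\lambda|+1))$, and multilinearity of $\det$ in the rows of an $n\times n$ matrix of norm at most $|\lambda|+C$ yields $\det(A-BD^{-1}C)=P^{\mathbf{Q}}(\lambda)+\mathcal O(\varepsilon^\alpha(|\lambda|+C)^n)$. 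Multiplying these expansions and using $|P^{\mathbf{Q}}(\lambda)|\le(|\lambda|+C)^n$ to absorb the cross term delivers the claimed bound $\mathcal O(\varepsilon^\alpha(|\lambda|+C)^{n+1})$.

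The main obstacle is the bookkeeping in the second paragraph: one must separately treat the four cases $(x,y)\in\mathcal{M}^2,\ \mathcal{M}\times\mathcal{S}^c,\ \mathcal{S}^c\times\mathcal{M},\ (\mathcal{S}^c)^2$ and verify the pairwise cancellation of the $\pm 1/(\varepsilon^\alpha h)$ contributions everywhere except on the $\mathcal{S}^c$-diagonal, so that the large entries localize cleanly in the $D$-block and Lemma \ref{approxqij} applies exactly where needed. The precise polynomial degree $n+1$ in the error bound is then forced: an $\mathcal O(\varepsilon^\alpha(|\lambda|+1))$ multiplicative correction from $\det D$ and the Schur complement, multiplied by $|P^{\mathbf{Q}}(\lambda)|\le(|\lambda|+C)^n$, produces exactly one extra power of $|\lambda|$ over the naive $n$-th degree estimate.
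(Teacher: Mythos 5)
Your proof is correct (in the regime where the proposition can actually hold, see below) and takes a genuinely different route from the paper's. The paper expands $P^\varepsilon(\lambda)=\det\mathbf{A}^\varepsilon(\lambda)$ with the Leibniz formula, partitions the $N!$ permutations into classes $\Pi_p$ according to how many transient states escape the deterministic map $T$, and isolates the coefficient of the highest power of $(\varepsilon^\alpha h)^{-1}$ through a careful sign analysis of $T$-cycles (Lemmas~\ref{lemmapermu1}--\ref{lemmapermu5}), before dominating all lower-order terms by geometric series. You instead collapse the transient states onto $\mathcal{M}$ by $\det$-preserving column operations along the $T$-forest and invoke a Schur complement: the $\mathcal O(\varepsilon^{-\alpha})$ entries localize exactly on the $(\mathcal{S}\setminus\mathcal{M})$-diagonal and cancel elsewhere, leaving a perturbation of $\mathbf{Q}-\lambda\mathbf{I}_n$ that Lemma~\ref{approxqij} identifies. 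Both proofs hinge on Lemma~\ref{approxqij}; the paper's route is longer but transparent about combinatorics and constants, yours is shorter and more structural. Two points to tighten. The stated reason for cancellation, ``$x$ and $Tx$ lie in the same basin $\mathcal{S}_j$'', suffices only for columns $y\in\mathcal{M}$, where $B_y=\mathcal{S}_j$; for $y\in\mathcal{S}\setminus\mathcal{M}$ the correct observation is that each subtree $B_y$ is closed under $T$-predecessors (if $T^kx=y$ with $k\geq 1$ then $T^{k-1}(Tx)=y$), so for $y\neq x$ one has $x\in B_y\Leftrightarrow Tx\in B_y$, while $Tx\notin B_x$ because only minima are $T$-fixed --- your conclusion is right, the argument just needs this form. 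Secondly, your parenthetical that $\varepsilon^\alpha(|\lambda|+1)>c_0$ is dispatched by ``crude polynomial bounds'' is not true: $P^\varepsilon$ has degree $N$ in $\lambda$, so for small fixed $\varepsilon$ and $|\lambda|\to\infty$ the left-hand side grows like $(\varepsilon^\alpha h)^{N-n}|\lambda|^N$, which eventually outstrips $C\varepsilon^\alpha(|\lambda|+C)^{n+1}$ once $N\geq n+2$. This is in fact a defect of the proposition's ``for every $\lambda\in\mathbb{C}$''; the paper's own geometric-series step also silently requires $(|\lambda|+N\overline{c})\varepsilon^\alpha h\leq\tfrac12$, and both proofs only establish the bound on that restricted range --- which is all the Hurwitz argument ever uses, since there $|\lambda|\leq r_\varepsilon$ with $\varepsilon^\alpha r_\varepsilon^{n+1}\to 0$. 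The caveat is therefore harmless, but you should not claim the restriction can be lifted for free.
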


Before we start with the proof of this Proposition let us show how it implies the statements of Theorem \ref{theoremeigenwerte}.

\noindent
\textit{Proof of Theorem \ref{theoremeigenwerte}.} Both statements follow from Proposition \ref{proppolynome} and the 
Hurwitz Theorem (see, for example,
Theorem 1.3.8 in \cite{RahmSchm2002}):
\begin{thm}[Hurwitz]
 Let $G \subset \mathbb C$ be a region and let $\{f_k\}_{k\geq 1}$ be a sequence of analytic functions on $G$ that
converges to a non-zero function $f$ uniformly on every compact subset of $G$. Then $z_0 \in G$
is a zero of $f$ with multiplicity $m$ if and only if there exists a neighbourhood $H \subset G$ such that,
in every ball $\{z\in\mathbb C\colon |z-z_0|\leq R \} \subset H$, each function $f_k$ whose index exceeds some bound $k_0 = k_0 (R)$
has exactly $m$ zeros, counted according to their multiplicities.
\end{thm}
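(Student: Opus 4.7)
The plan is to deduce Hurwitz's theorem from the argument principle, which expresses the zero-count of a holomorphic function inside a contour as a contour integral that is stable under uniform convergence on compacts.

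First, isolate $z_0$: since $f$ is holomorphic on $G$ and not identically zero, its zeros are isolated, so there exists $R_0>0$ with $\overline{D(z_0,R_0)}\subset G$ such that $f$ has no zero in $\overline{D(z_0,R_0)}\setminus\{z_0\}$. Take $H=D(z_0,R_0)$; this will serve as the neighbourhood asserted by the theorem.

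For the forward direction ($z_0$ a zero of $f$ of multiplicity $m$ $\Rightarrow$ eventual zero-count equals $m$): fix $R\in(0,R_0)$ and set $\gamma=\partial D(z_0,R)$. Then $\delta\coloneqq\min_{z\in\gamma}|f(z)|>0$. By hypothesis, $f_k\to f$ uniformly on the compact set $\gamma$, so for $k$ past some $k_0=k_0(R)$ we have $|f_k-f|<\delta/2$ on $\gamma$; in particular every $f_k$ is zero-free on $\gamma$. Cauchy's integral formula for derivatives, applied on a slightly larger closed disk still contained in $G$, gives $f_k'\to f'$ uniformly on $\gamma$. Combined with the lower bound $|f|\geq\delta$ on $\gamma$, this implies $f_k'/f_k\to f'/f$ uniformly on $\gamma$, hence
\begin{equation*}
N_k\coloneqq\frac{1}{2\pi\i}\oint_\gamma \frac{f_k'(z)}{f_k(z)}\,\di z\longrightarrow \frac{1}{2\pi\i}\oint_\gamma \frac{f'(z)}{f(z)}\,\di z=m,
\end{equation*}
the last equality being the argument principle applied to $f$. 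Since each $N_k$ counts (with multiplicity) the zeros of $f_k$ inside $D(z_0,R)$ and therefore lies in $\mathbb{Z}_{\geq 0}$, the convergence $N_k\to m$ forces $N_k=m$ for all $k$ sufficiently large, possibly after enlarging $k_0$.

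For the converse direction, suppose the multiplicity-matching property holds, and let $m'\geq 0$ denote the order of vanishing of $f$ at $z_0$ (with the convention $m'=0$ if $f(z_0)\neq 0$). Applying the forward direction to $m'$ yields, for every sufficiently small $R$ and all sufficiently large $k$, that $f_k$ has exactly $m'$ zeros in $D(z_0,R)$. Comparing with the hypothesized count $m$ forces $m=m'$.

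The main obstacle is the analytic step showing $f_k'/f_k\to f'/f$ uniformly on $\gamma$. This rests on two ingredients: (i) convergence of derivatives on compact sets, obtained from Cauchy's formula $f_k'(z)-f'(z)=(2\pi\i)^{-1}\oint_{|w-z|=r}(f_k(w)-f(w))(w-z)^{-2}\,\di w$ applied uniformly for $z\in\gamma$ (with $r$ chosen so the enclosing disk lies in $G$); and (ii) a uniform lower bound $|f_k|\geq\delta/2$ on $\gamma$, deduced from $|f|\geq\delta$ and uniform convergence. Once these are in place, the argument principle and the integer-valuedness of $N_k$ close the argument cleanly, and no deeper function theory beyond Cauchy's formula and the argument principle is required.
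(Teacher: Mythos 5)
Your proposal is a correct and complete proof of Hurwitz's theorem via the argument principle: you isolate $z_0$ using the fact that the (Weierstrass) limit $f$ is analytic and not identically zero on the region $G$, obtain a uniform lower bound $|f|\geq\delta$ on the circle $\gamma=\partial D(z_0,R)$, upgrade uniform convergence of $f_k$ to uniform convergence of $f_k'$ via the Cauchy integral formula, conclude that the zero-counting integrals $N_k$ converge to $m$, and use integer-valuedness to force $N_k=m$ eventually; the converse is correctly reduced to the forward direction applied to the true order $m'$ of $f$ at $z_0$. Note, however, that the paper does not prove this statement at all: it is quoted verbatim as Theorem 1.3.8 of Rahman and Schmeisser and used as a black box to pass from the uniform convergence of the rescaled characteristic polynomials (Proposition \ref{proppolynome}) to the localization of the top $n$ eigenvalues in Theorem \ref{theoremeigenwerte}. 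So there is no in-paper argument to compare against; your write-up is the standard self-contained proof, and the only cosmetic point worth mentioning is that the theorem's closed balls $\{|z-z_0|\leq R\}$ and the open disks counted by the argument principle carry the same number of zeros of $f_k$ because your bound $|f_k|\geq\delta/2$ on $\gamma$ rules out boundary zeros.
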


For the proof of (i) fix a closed ball centred around $0$ 
that includes all the eigenvalues of $\mathbf{Q}$. Then Proposition \ref{proppolynome} ensures the uniform convergence inside of this ball. 
Moreover, let a sequence $r_\varepsilon>0$ be such that
\begin{align}\label{spectralgap}
 \lim_{\varepsilon\rightarrow 0} r_\varepsilon=+\infty\quad\text{and}\quad 
 \lim_{\varepsilon\rightarrow 0}\varepsilon^\alpha r_\varepsilon^{n+1} = 0.
\end{align}
Then, Proposition \ref{proppolynome} implies that
\begin{align}
 \lim_{\varepsilon\rightarrow 0}\sup_{|\lambda|\leq r_\varepsilon}
 |(- \varepsilon^\alpha h)^{N-n}P^\varepsilon(\lambda)-P^\mathbf{Q}(\lambda)|=0
\end{align}
and that proves the statement (ii).
\hfill (\textit{Theorem \ref{theoremeigenwerte}})$\Box$

\medskip

The rest of this section  is devoted to the proof of Proposition \ref{proppolynome}.
We start of with some preparations. Recall that the set $\mathcal S$ consists of $N$ elements, 
let 
\begin{align}
S_N=
S_{|\mathcal{S}|}\coloneqq 
\{\boldsymbol{\pi}=(\pi_x)_{x\in\mathcal{S}}\colon \boldsymbol{\pi} \text{ is bijective mapping } \mathcal{S}\rightarrow \mathcal{S}\}
\end{align}
be the set of all permutations of $\mathcal{S}$, and denote by $\textbf{id}$ the identity permutation.
The Leibniz formula for the characteristic polynomial $P^\varepsilon(\lambda)$ gives
\begin{align}\label{Leibniz}
 P^\varepsilon(\lambda)=\sum_{\boldsymbol{\pi}\in S_{N}}\mathop{\mathrm{sgn}}(\boldsymbol{\pi})\prod_{x\in\mathcal{S}} 
 a^\varepsilon_{x,\pi_x}(\lambda).
\end{align}

Let us briefly explain the heuristics behind the following analysis of $P^\varepsilon$. 
Let the parameters $h$, $\delta$, $R$ and $\alpha$ be fixed. 
Clearly, in view of the asymptotics of the entries $a_{x,y}^\e$ as $\e\to 0$, for any fixed $\lambda\in\mathbb{C}$ we have 
$| P^\varepsilon(\lambda)|\to \infty$ as
$\varepsilon\rightarrow 0$. 
With that in mind, let us omit for a moment the values $\tilde{a}^\varepsilon_{x,y}$ since they are of the order $\mathcal O(1)$,
that is negligible in comparison to $\varepsilon^{-\alpha}$.
Then one can think of $P^\varepsilon(\lambda)$ as of a polynomial of the variable $\varepsilon^{-\alpha}$
with coefficients depending on $\lambda$ and terms of order $\mathcal O(1)$. 

The degree of this polynomial does not exceed $N-n$ since only the rows indexed by the minima 
$\mathfrak{m}_1,\ldots,\mathfrak{m}_n$ do not contain the term $(\varepsilon^{\alpha}h)^{-1}$. Hence we can write
\begin{align}
 P^\varepsilon(\lambda)\approx \sum_{j=0}^{N-n} c^\varepsilon_j(\lambda)(\varepsilon^{\alpha}h)^{-j}.
\end{align}
By a multiplication by the factor $(\varepsilon^{\alpha}h)^{N-n}$ we single out the coefficient 
$c^\varepsilon_{N-n}(\lambda)$ since all the other summands vanish in
the limit $\varepsilon\rightarrow 0$.
On the other hand, $c^\varepsilon_{N-n}(\lambda)$ contains expressions of the form 
$\sum_{y\in\mathcal{S}_j}\frac{1}{\varepsilon^\alpha h}p^\varepsilon_{\mathfrak{m}_i,y}$, $1\leq i,j\leq n$,
which, as we saw in Lemma \ref{approxqij}, are approximations of the entries $q_{i,j}$ of the generator matrix $\mathbf{Q}$. 
Finally, having that in mind we prove
that $c^\varepsilon_{N-n}(\lambda)$ approximates $P^{\mathbf{Q}}(\lambda)$.

In view of the Leibniz formula \eqref{Leibniz} we should first turn to the permutations that contribute the 
terms of order $\mathcal O((\varepsilon^{\alpha}h)^{-(N-n)})$
to the polynomial $P^\varepsilon$. 
As we see from Lemma \ref{ordnungeintraege} (ii) and (iii), such permutations are precisely those which satisfy the condition
$\pi_x=x$ or $\pi_x=y^*(x)$ for $x\in\mathcal{S}\setminus\mathcal{M}$. Consider the family of permutations 
\begin{align}\label{Pi0}
 \Pi_0\coloneqq \left\{\boldsymbol{\pi}=(\pi_x)_{x\in\mathcal{S}}\in S_{N}\colon \pi_x=x\text{ or }\pi_x=y^*(x)\text{ for all }x\in\mathcal{S}\setminus\mathcal{M}\right\}.
\end{align}

\begin{lem}
\label{lemmapermu1}
For every $i=1,\dots, n$, every $y\in\mathcal{S}_i$ there exists a permutation $\boldsymbol{\pi}\in\Pi_0$ such that
 \begin{align}
  \pi_{\mathfrak{m}_i}=y.
 \end{align}
\end{lem}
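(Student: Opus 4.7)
The plan is to construct the required permutation explicitly by exploiting the structure of the deterministic dynamics generated by $T$. Given $y \in \mathcal{S}_i$, I would first look at the forward $T$-orbit
\[
y,\ Ty,\ T^2 y,\ \ldots
\]
By the monotonicity of the sequence $\{T^k x\}_{k\geq 0}$ (noted immediately after the definition in \eqref{eq:2}) together with the finiteness of $\mathcal{S}$, this orbit must eventually stabilize, and it can only stabilize at a point where $T$ is the identity, i.e.\ at some $\mathfrak{m}_j \in \mathcal{M}$. The construction of $\mathcal{S}$ (which refines the deterministic flow $x \mapsto x - hU'(x)$ so that the discretized motion agrees with the ODE $\dot x = -U'(x)$ on its wells of attraction) forces $\mathfrak{m}_j = \mathfrak{m}_i$, since the orbit starts in $\mathcal{S}_i$ and $\mathcal{S}_i$ is $T$-invariant. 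Hence there is a unique smallest $k \geq 0$ with $T^k y = \mathfrak{m}_i$, and the points $y, Ty, \ldots, T^{k-1}y$ are all distinct elements of $\mathcal{S}_i \setminus \mathcal{M}$.

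Given this $T$-path, I would define $\boldsymbol{\pi}$ as the cyclic permutation
\[
\mathfrak{m}_i \longmapsto y \longmapsto Ty \longmapsto T^2 y \longmapsto \cdots \longmapsto T^{k-1} y \longmapsto \mathfrak{m}_i,
\]
together with $\pi_z = z$ for every $z \in \mathcal{S}$ not appearing in this cycle. Since the elements of the cycle are pairwise distinct, $\boldsymbol{\pi}$ is a well-defined bijection $\mathcal{S} \to \mathcal{S}$, and by construction $\pi_{\mathfrak{m}_i} = y$. In the degenerate case $y = \mathfrak{m}_i$ (so $k=0$), one simply takes $\boldsymbol{\pi} = \mathbf{id}$.

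It then remains to check that $\boldsymbol{\pi} \in \Pi_0$. For $x = T^j y$ with $0 \leq j \leq k-1$ we have $x \in \mathcal{S} \setminus \mathcal{M}$ and $\pi_x = T^{j+1} y = Tx = y^*(x)$, which is one of the two allowed values. For every other $x \in \mathcal{S} \setminus \mathcal{M}$ we have $\pi_x = x$, which is also allowed. Thus $\boldsymbol{\pi}$ satisfies the defining constraint \eqref{Pi0}.

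The only non-trivial ingredient — and hence the "main obstacle" — is the claim that the $T$-orbit of an arbitrary $y \in \mathcal{S}_i$ reaches $\mathfrak{m}_i$ in finitely many steps and does not accidentally escape into a neighbouring well. This is however guaranteed by the way the mesh, the time step $h$ and the range $R$ were chosen in the construction preceding \eqref{eq:1}: the discretized drift step $x \mapsto x - hU'(x)$ respects the partition by wells of attraction, so that $T(\mathcal{S}_i) \subseteq \mathcal{S}_i$, and the monotonicity combined with $|\mathcal{S}_i| < \infty$ and $\mathfrak{m}_i$ being the unique $T$-fixed point inside $\mathcal{S}_i$ forces termination at $\mathfrak{m}_i$.
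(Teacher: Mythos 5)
Your proposal is correct and takes essentially the same approach as the paper: construct the cycle $\mathfrak{m}_i\mapsto y\mapsto Ty\mapsto\cdots\mapsto T^{k-1}y\mapsto\mathfrak{m}_i$ along the deterministic $T$-orbit and extend by the identity elsewhere. The only difference is that you spell out why the orbit reaches $\mathfrak{m}_i$ in finitely many steps (monotonicity, finiteness of $\mathcal{S}_i$, and $T$-invariance of the wells), a fact the paper simply asserts from the construction of $\mathcal{S}$.
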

\begin{proof}
Fix $y\in\mathcal{S}_i$. If $y=\mathfrak{m}_i$, then we can choose $\boldsymbol{\pi}=\textbf{id}$. Otherwise there exists a $k=k(y)\geq 1$
such that $((\mathbf{P}^0)^{(k)})_{y,\mathfrak{m}_i}=1$, that is the deterministic motion, if
started in $y$, reaches the minimum $\mathfrak{m}_i$ in $k$ steps. Recall that the mapping 
$T\colon \mathcal{S}\rightarrow \mathcal{S}$,  $T(x)\coloneqq y^*(x)$. 
Then the equality $((\mathbf{P}^0)^{(k)})_{y,\mathfrak{m}_i}=1$ is
equivalent to
\begin{align}
 T^ky
 =\mathfrak{m}_i.
\end{align}
Now we can define a permutation $\hat{\boldsymbol{\pi}}\colon\mathcal{S}\rightarrow \mathcal{S}$ by 
\begin{align}
\hat{\pi}_y\coloneqq Ty ,\ \hat{\pi}_{Ty}=T^2y,\dots,\ \hat{\pi}_{T^{k-1}y}=T^ky=\mathfrak{m}_i,\ \hat{\pi}_{\mathfrak{m}_i}=y
\end{align}
as well as
\begin{align}
 \hat{\pi}_z=z\quad\text{if } z\in\mathcal{S}\setminus\{y,Ty,\ldots,T^ky\},
\end{align}
It is clear that $\hat{\boldsymbol{\pi}}\in\Pi_0$.
\end{proof}

Consider the family of  vectors
\begin{align}
 \mathfrak{T}_n\coloneqq \left\{(y_1,\ldots,y_n)\colon y_i\in\mathcal{S}_i\right\}.
\end{align}

\begin{lem}\label{lemmapermu2}
 For every $(y_1,\ldots,y_n)\in\mathfrak{T}_n$ there exists a permutation $\boldsymbol{\pi}\in\Pi_0$ such that
 \begin{align}
  \pi_{\mathfrak{m}_i}=y_i,\quad1\leq i\leq n.
 \end{align}
\end{lem}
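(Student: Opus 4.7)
The plan is to generalize the construction from the previous lemma (Lemma \ref{lemmapermu1}) by running one such cycle independently in each potential well and patching them together via the identity. The key structural fact that makes this work is that the deterministic descent $T$ preserves the domains of attraction: for every $y \in \mathcal{S}_i$ the orbit $y, Ty, T^2 y,\ldots$ stays in $\mathcal{S}_i$ and is monotone, hence reaches $\mathfrak{m}_i$ in finitely many steps, say $k_i = k_i(y_i)$.

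Concretely, for each $i = 1,\dots,n$ consider the (possibly degenerate, if $y_i = \mathfrak{m}_i$) finite orbit
\begin{align}
\mathcal O_i \coloneqq \{ y_i,\, Ty_i,\, T^2 y_i,\, \ldots,\, T^{k_i}y_i = \mathfrak{m}_i \} \subset \mathcal{S}_i.
\end{align}
I would then define $\boldsymbol{\pi}$ by declaring it to be the cyclic shift on each $\mathcal O_i$,
\begin{align}
\pi_{T^j y_i} \coloneqq T^{j+1} y_i \quad (0 \leq j \leq k_i - 1), \qquad \pi_{\mathfrak{m}_i} \coloneqq y_i,
\end{align}
and the identity on the complement $\mathcal{S} \setminus \bigsqcup_{i=1}^n \mathcal O_i$.

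Next I would check that $\boldsymbol{\pi}$ is well defined and a bijection. Since the wells $\mathcal{S}_i$ are pairwise disjoint and each $\mathcal O_i \subset \mathcal{S}_i$, the orbits $\mathcal O_1,\ldots,\mathcal O_n$ are pairwise disjoint, so the prescriptions on different $\mathcal O_i$ do not clash. Within a single $\mathcal O_i$ all points are distinct because $\{Z^0_k(y_i)\}_{k \geq 0}$ is monotone (as recalled after \eqref{eq:2}), and the prescription is a cyclic permutation of $\mathcal O_i$. The identity on the complement is trivially a bijection onto itself, so $\boldsymbol{\pi} \in S_N$.

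Finally, I would verify membership in $\Pi_0$ and the required prescription. For $x \in \mathcal{S} \setminus \mathcal{M}$, either $x$ lies in some $\mathcal O_i$ as $T^j y_i$ with $0 \leq j \leq k_i - 1$, in which case $\pi_x = T^{j+1} y_i = y^*(x)$ by definition of $T$; or $x$ lies outside every $\mathcal O_i$, in which case $\pi_x = x$. Either way the defining condition \eqref{Pi0} is met, so $\boldsymbol{\pi} \in \Pi_0$. By construction $\pi_{\mathfrak{m}_i} = y_i$ for $1 \leq i \leq n$, which is the desired property. The only substantive point is the disjointness of the $\mathcal O_i$, which is immediate from $\mathcal O_i \subset \mathcal{S}_i$ and the partition $\mathcal{S} = \mathcal{S}_1 \sqcup \cdots \sqcup \mathcal{S}_n$; no real obstacle appears, the construction being essentially a parallelization of Lemma \ref{lemmapermu1} across the wells.
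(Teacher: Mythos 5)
Your construction is correct and is essentially the paper's proof: the paper simply composes the permutations $\boldsymbol{\pi}_i$ built in Lemma \ref{lemmapermu1}, each of which is exactly your cyclic shift on $\mathcal O_i$ and the identity elsewhere, and notes that they have pairwise disjoint supports contained in the respective $\mathcal S_i$. You make the disjointness and the $\Pi_0$-membership check more explicit, but the argument is the same.
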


\begin{proof}
Let $\boldsymbol{\pi}_i$, $i=1,\dots,n$, be the permutations constructed in the proof of Lemma \ref{lemmapermu1}. Note that
each of them is non-identical only on the set $\mathcal{S}_i$. Hence a composition of $\boldsymbol{\pi}_i$, $i=1,\dots,n$,
has the desired property.
\end{proof}

\begin{lem}\label{lemmapermu3}
 For every $(y_1,\ldots,y_n)\in\mathfrak{T}_n$ there exists a permutation $\boldsymbol{\pi}\in\Pi_0$ such that
 \begin{align}
  \pi_{\mathfrak{m}_i}=y_j,\quad 1\leq i,j\leq n.
 \end{align}
\end{lem}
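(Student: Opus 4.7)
The plan is to reduce the lemma to Lemma \ref{lemmapermu2} by factorizing the desired permutation through a simpler one supported only on $\mathcal M$. I read the statement as: for every $(y_1,\ldots,y_n)\in\mathfrak T_n$ and every permutation $\sigma\in S_n$, there exists $\boldsymbol{\pi}\in\Pi_0$ with $\pi_{\mathfrak m_i}=y_{\sigma(i)}$ for all $1\leq i\leq n$ (the literal wording $\pi_{\mathfrak m_i}=y_j$ for all $i,j$ is incompatible with $\boldsymbol{\pi}$ being a permutation, and the Leibniz expansion which motivates these lemmas requires exactly this more general form).

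First I would apply Lemma \ref{lemmapermu2} to produce a permutation $\boldsymbol{\pi}'\in\Pi_0$ with $\pi'_{\mathfrak m_i}=y_i$ for each $i$. Next, I would introduce the bijection $\boldsymbol{\tau}\colon\mathcal S\to\mathcal S$ defined by $\tau_{\mathfrak m_i}:=\mathfrak m_{\sigma(i)}$, $1\leq i\leq n$, and $\tau_x:=x$ for $x\in\mathcal S\setminus\mathcal M$. This $\boldsymbol{\tau}$ merely permutes the minima according to $\sigma$ and fixes everything else; in particular $\boldsymbol{\tau}\in\Pi_0$ because the defining condition $\tau_x\in\{x,y^*(x)\}$ holds trivially with $\tau_x=x$ for every $x\notin\mathcal M$.

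Finally, I would set $\boldsymbol{\pi}:=\boldsymbol{\pi}'\circ\boldsymbol{\tau}$ and verify the two required properties. For $x\in\mathcal S\setminus\mathcal M$ we have $\pi_x=\pi'_{\tau_x}=\pi'_x$, which lies in $\{x,y^*(x)\}$ by the choice of $\boldsymbol{\pi}'\in\Pi_0$, so $\boldsymbol{\pi}\in\Pi_0$. At each minimum, $\pi_{\mathfrak m_i}=\pi'_{\tau_{\mathfrak m_i}}=\pi'_{\mathfrak m_{\sigma(i)}}=y_{\sigma(i)}$, which is exactly the prescribed assignment.

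There is no genuine obstacle in this argument. The only point worth isolating is that the defining constraint for $\Pi_0$ touches only the non-minimum coordinates, and is therefore preserved under post-composition with any permutation of $\mathcal M$ (extended by the identity on $\mathcal S\setminus\mathcal M$). In other words, permutations of the minima are ``free'' inside $\Pi_0$, so the lemma is an immediate corollary of Lemma \ref{lemmapermu2} combined with this group-theoretic observation.
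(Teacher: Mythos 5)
Your proof is correct and follows essentially the same route as the paper: take the permutation from Lemma \ref{lemmapermu2} and then permute the minima among each other, which is harmless because the defining condition of $\Pi_0$ constrains only the states in $\mathcal{S}\setminus\mathcal{M}$. You also correctly identify that the lemma's literal wording must be read as realizing an arbitrary assignment $\pi_{\mathfrak{m}_i}=y_{\sigma(i)}$; your explicit composition $\boldsymbol{\pi}'\circ\boldsymbol{\tau}$ just makes precise what the paper states informally.
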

\begin{proof} 
Fix $(y_1,\ldots,y_n)\in\mathfrak{T}_n$ and an associated permutation $\boldsymbol{\pi}$ from Lemma \ref{lemmapermu2} 
such that $\pi_{\mathfrak{m}_i}=y_i$, $i=1,\dots,n$.
Since no condition on the images of the states $\mathfrak{m}_1,\dots,\mathfrak{m}_n$
is imposed in the definition of $\Pi_0$, we can permute them among each other to obtain $\boldsymbol{\pi}\in\Pi_0$ with the 
required property.
\end{proof}

For the investigation of the determinant of the matrix $\mathbf{A}^\e(\lambda)$
we need to introduce the following disjoint decomposition of the set of all permutations $S_N$. 
For $p=0,\dots,N-n$ let
\begin{equation}
\label{Pip}
\begin{aligned}
\Pi_p \coloneqq \{\boldsymbol{\pi}\in S_N\colon &\text{ there are precisely }p\text{ states }
x_1,\ldots,x_p\in\mathcal{S}\setminus\mathcal{M} \\
&\text{ such that } \pi_{x_q}\notin\left\{x_q,y^*(x_q)\right\}\text{ for all }1\leq q\leq p\}.
\end{aligned}
\end{equation}
With this definition we can write
\begin{align}
 S_N=\bigsqcup_{p=0}^{N-n}\Pi_p.
\end{align}
Also note that the definition of $\Pi_0$ in \eqref{Pi0} coincides with the one given in \eqref{Pip}.
For any $\boldsymbol{\pi}\in S_N$ we also define
\begin{equation}
\begin{aligned}
 \mathcal{S}^1(\boldsymbol{\pi})	&\coloneqq 	\{x\in\mathcal{S}\setminus\mathcal{M}\colon \pi_x=x\},\\
 \mathcal{S}^2(\boldsymbol{\pi})	&\coloneqq 	\{x\in\mathcal{S}\setminus\mathcal{M}\colon \pi_x=y^*(x)\},\\
 \mathcal{S}^3(\boldsymbol{\pi})	&\coloneqq 	
 \mathcal{S}\setminus\Big(\mathcal{S}^1(\boldsymbol{\pi})\bigsqcup\mathcal{S}^2(\boldsymbol{\pi})\Big).
\end{aligned}
\end{equation}
Clearly these sets are disjoint and $\mathcal S=\bigsqcup_{i=1}^3 \mathcal{S}^i(\boldsymbol{\pi})$.

The following Lemma follows directly from the definition of $\Pi_p$.
\begin{lem}\label{formelpermus}
\begin{itemize}
 \item [(i)]  For every $\boldsymbol{\pi}\in\Pi_0$ we have $\mathcal{S}^3(\boldsymbol{\pi})=\mathcal{M}$.
 \item [(ii)] For every $0\leq p\leq N-n$ and every $\boldsymbol{\pi}\in\Pi_p$ 
 we have $\vert\mathcal{S}^1(\boldsymbol{\pi})\sqcup\mathcal{S}^2(\boldsymbol{\pi})\vert=N-n-p$.\hfill \qed
\end{itemize}
\end{lem}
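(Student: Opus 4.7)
The plan is to derive both statements directly from the definitions, with essentially no machinery required; the entire lemma is a bookkeeping claim about the sets $\mathcal{S}^1,\mathcal{S}^2,\mathcal{S}^3$ attached to a permutation $\boldsymbol{\pi}$.

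For part (i), I would first recall that by the very definition of $\Pi_0$ in \eqref{Pi0}, every $x\in\mathcal{S}\setminus\mathcal{M}$ satisfies either $\pi_x=x$ or $\pi_x=y^*(x)$. This means precisely that $\mathcal{S}\setminus\mathcal{M}\subseteq \mathcal{S}^1(\boldsymbol{\pi})\cup\mathcal{S}^2(\boldsymbol{\pi})$. Conversely, both $\mathcal{S}^1(\boldsymbol{\pi})$ and $\mathcal{S}^2(\boldsymbol{\pi})$ are defined as subsets of $\mathcal{S}\setminus\mathcal{M}$, so the reverse inclusion is immediate. Noting also that these two sets are disjoint (since $y^*(x)\neq x$ for $x\in\mathcal{S}\setminus\mathcal{M}$ by condition \eqref{eq:1}), we conclude $\mathcal{S}^1(\boldsymbol{\pi})\sqcup\mathcal{S}^2(\boldsymbol{\pi})=\mathcal{S}\setminus\mathcal{M}$. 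Taking complements inside $\mathcal{S}$ yields $\mathcal{S}^3(\boldsymbol{\pi})=\mathcal{M}$, as claimed.

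For part (ii), I would fix $0\leq p\leq N-n$ and $\boldsymbol{\pi}\in\Pi_p$. By the definition \eqref{Pip}, there are exactly $p$ states $x_1,\ldots,x_p\in\mathcal{S}\setminus\mathcal{M}$ with $\pi_{x_q}\notin\{x_q,y^*(x_q)\}$, i.e.\ exactly $p$ points of $\mathcal{S}\setminus\mathcal{M}$ that fail both conditions defining $\mathcal{S}^1$ and $\mathcal{S}^2$. The complementary $|\mathcal{S}\setminus\mathcal{M}|-p=(N-n)-p$ points of $\mathcal{S}\setminus\mathcal{M}$ satisfy $\pi_x=x$ or $\pi_x=y^*(x)$, so they lie in $\mathcal{S}^1(\boldsymbol{\pi})\sqcup\mathcal{S}^2(\boldsymbol{\pi})$ (again disjointness is automatic by \eqref{eq:1}). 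Since $\mathcal{S}^1(\boldsymbol{\pi})\cup\mathcal{S}^2(\boldsymbol{\pi})\subseteq\mathcal{S}\setminus\mathcal{M}$, this gives $|\mathcal{S}^1(\boldsymbol{\pi})\sqcup\mathcal{S}^2(\boldsymbol{\pi})|=N-n-p$.

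There is no real obstacle here: the lemma is a direct unpacking of how the families $\Pi_p$ and the decomposition $\mathcal{S}=\mathcal{S}^1(\boldsymbol{\pi})\sqcup\mathcal{S}^2(\boldsymbol{\pi})\sqcup\mathcal{S}^3(\boldsymbol{\pi})$ have been defined, together with the single nontrivial input that $y^*(x)\neq x$ on $\mathcal{S}\setminus\mathcal{M}$, guaranteed by the construction of $\mathcal{S}$ via \eqref{eq:1}. The only care needed is to check this disjointness so that the cardinalities add correctly; everything else is a tautology.
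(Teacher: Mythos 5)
Your argument is correct and matches the paper's intent exactly: the paper offers no written proof, stating only that the lemma ``follows directly from the definition of $\Pi_p$,'' and your unpacking of the definitions of $\Pi_p$ and of $\mathcal{S}^1,\mathcal{S}^2,\mathcal{S}^3$, together with the observation that $y^*(x)\neq x$ on $\mathcal{S}\setminus\mathcal{M}$ by \eqref{eq:1} so that the union is disjoint, is precisely the routine verification the authors left implicit.
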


\medskip

Let us introduce functions $f_p\colon \boldsymbol{\pi} \to \mathbb N_0$ that count the number of fixed points of 
$\boldsymbol{\pi}\in\Pi_p$ on the set $\mathcal{S}\setminus\mathcal{M}$, namely
\begin{align}
 f_p(\boldsymbol{\pi})\coloneqq |\{x\in\mathcal{S}\setminus\mathcal{M}\colon \pi_x=x\}|
 =|\mathcal{S}^1(\boldsymbol{\pi})|.
\end{align}
By definition of $\Pi_p$, it follows that $f_p$ has values in $\left\{0,\ldots,N-n-p\right\}$.
We obtain a trivial equality
\begin{align}
\prod_{x\in\mathcal{S}^1(\boldsymbol{\pi})}\Big(-\frac{1}{\varepsilon^\alpha h}-\lambda+\tilde{a}^\varepsilon_{x,x}\Big)
=(-1)^{f_p(\boldsymbol{\pi})}
\prod_{x\in\mathcal{S}^1(\boldsymbol{\pi})}\Big(\frac{1}{\varepsilon^\alpha h}+(\lambda-\tilde{a}^\varepsilon_{x,x})\Big)
\end{align}
which allows us to write the characteristic polynomial $P^\varepsilon$ as
\begin{equation}
\label{charpoly1}
\begin{aligned}
P^\varepsilon(\lambda)
=\sum_{p=0}^{N-n}\Big[\sum_{\boldsymbol{\pi}\in\Pi_p}\mathop{\mathrm{sgn}}(\boldsymbol{\pi})(-1)^{f_p(\boldsymbol{\pi})}
&\cdot\prod_{x\in\mathcal{S}^1(\boldsymbol{\pi})}\Big(\frac{1}{\varepsilon^\alpha h}+(\lambda-\tilde{a}^\varepsilon_{x,x})\Big)\\
&\times\prod_{x\in\mathcal{S}^2(\boldsymbol{\pi})}\Big(\frac{1}{\varepsilon^\alpha h}-\tilde{a}^\varepsilon_{x,y^*(x)}\Big)\\
&\times\prod_{x\in\mathcal{S}^3(\boldsymbol{\pi})}a^\varepsilon_{x,\pi_x}\Big].
\end{aligned}
\end{equation}
Using Lemma \ref{formelpermus} (ii) along with a simple straightforward calculation 
yields that there are numbers $\beta^\varepsilon_{l,p,\boldsymbol{\pi}}(\lambda)=\mathcal O(1)$ such that
\begin{align}\label{betas}
 \prod_{x\in\mathcal{S}^1(\boldsymbol{\pi})}\Big(\frac{1}{\varepsilon^\alpha h}+(\lambda-\tilde{a}^\varepsilon_{x,x})\Big)
 \prod_{x\in\mathcal{S}^2(\boldsymbol{\pi})}\Big(\frac{1}{\varepsilon^\alpha h}-\tilde{a}^\varepsilon_{x,x}\Big)
 =\sum_{l=0}^{N-n-p}\beta^\varepsilon_{l,p,\boldsymbol{\pi}}(\lambda)\Big(\frac{1}{\varepsilon^{\alpha}h}\Big)^l.
\end{align}
As explained before $P^\varepsilon$ can roughly be viewed as a polynomial in the variable $(\varepsilon^{\alpha}h)^{-1}$ and our aim is 
to single out the coefficient that belongs to the highest power.

After multiplying the polynomial $P^\e$ by the factor
$(\varepsilon^{\alpha} h)^{N-n}$ and taking Lemma \ref{formelpermus} (i) into account, 
we split the sum in \eqref{charpoly1} up into three parts to get the representation
\begin{equation}
\label{charpoly2}
\begin{aligned}
(\varepsilon^{\alpha} h)^{N-n}P^\varepsilon(\lambda)
&=\sum_{\boldsymbol{\pi}\in\Pi_0}\mathop{\mathrm{sgn}}(\boldsymbol{\pi})(-1)^{f_0(\boldsymbol{\pi})}
\prod_{i=1}^na^\varepsilon_{\mathfrak{m}_i,\pi_{\mathfrak{m}_i}}(\lambda)\\
&\hspace*{-1.5cm}+ \sum_{\boldsymbol{\pi}\in\Pi_0}
\mathop{\mathrm{sgn}}(\boldsymbol{\pi})(-1)^{f_0(\boldsymbol{\pi})}
\Big(\sum_{l=0}^{N-n-1}\beta^\varepsilon_{l,0,\boldsymbol{\pi}}(\lambda)(\varepsilon^{\alpha}h)^{N-n-l}\Big)
\prod_{i=1}^na^\varepsilon_{\mathfrak{m}_i,\pi_{\mathfrak{m}_i}}(\lambda)\\
&\hspace*{-1.5cm}+ \sum_{p=1}^{N-n}\sum_{\boldsymbol{\pi}\in\Pi_p}\mathop{\mathrm{sgn}}(\boldsymbol{\pi})(-1)^{f_p(\boldsymbol{\pi})}
\Big(\sum_{l=0}^{N-n-p}\beta^\varepsilon_{l,p,\boldsymbol{\pi}}(\lambda)(\varepsilon^{\alpha}h)^{N-n-l}\Big)
\prod_{x\in\mathcal{S}^3(\boldsymbol{\pi})}a^\varepsilon_{x,\pi_x}.
\end{aligned}
\end{equation}
Let us continue the investigation of the permutations.

\begin{lem}\label{lemmapermu4}
 Fix $\boldsymbol{\pi}\in\Pi_0$ such that $\pi_{\mathfrak{m}_i}\in\mathcal{S}_i$ for every $1\leq i\leq n$. Then
 \begin{align}
  \mathop{\mathrm{sgn}}(\boldsymbol{\pi})(-1)^{f_0(\boldsymbol{\pi})}=(-1)^{N-n}.
 \end{align}
\end{lem}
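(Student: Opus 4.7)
The plan is to prove the identity well by well, i.e., to factor the permutation $\boldsymbol{\pi}$ over the partition $\mathcal{S}=\bigsqcup_{i=1}^n \mathcal{S}_i$ and then combine signs.

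First I would verify the following key observation. Under the assumption $\pi_{\mathfrak{m}_i}\in\mathcal{S}_i$, the permutation $\boldsymbol{\pi}$ maps each well $\mathcal{S}_i$ into itself: indeed, for $x\in\mathcal{S}_i\setminus\{\mathfrak{m}_i\}$, either $\pi_x=x\in\mathcal{S}_i$ or $\pi_x=Tx\in\mathcal{S}_i$ (since the deterministic motion $T$ preserves the domain of attraction), and $\pi_{\mathfrak{m}_i}\in\mathcal{S}_i$ by hypothesis. Since $\boldsymbol{\pi}$ is a bijection of the finite set $\mathcal{S}$, it restricts to a bijection $\boldsymbol{\pi}|_{\mathcal{S}_i}\colon\mathcal{S}_i\to\mathcal{S}_i$ for every $i$. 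Consequently
\begin{equation*}
\mathop{\mathrm{sgn}}(\boldsymbol{\pi})=\prod_{i=1}^n\mathop{\mathrm{sgn}}(\boldsymbol{\pi}|_{\mathcal{S}_i}),\qquad f_0(\boldsymbol{\pi})=\sum_{i=1}^n|\mathcal{S}^1(\boldsymbol{\pi})\cap\mathcal{S}_i|,\qquad N-n=\sum_{i=1}^n(|\mathcal{S}_i|-1),
\end{equation*}
so it suffices to prove, for each $i$, the local identity $\mathop{\mathrm{sgn}}(\boldsymbol{\pi}|_{\mathcal{S}_i})(-1)^{|\mathcal{S}^1(\boldsymbol{\pi})\cap\mathcal{S}_i|}=(-1)^{|\mathcal{S}_i|-1}$.

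Next I would analyse the cycle structure of $\boldsymbol{\pi}|_{\mathcal{S}_i}$. This is the main step. Set $y_i:=\pi_{\mathfrak{m}_i}\in\mathcal{S}_i$. If $y_i=\mathfrak{m}_i$ then $\mathfrak{m}_i$ is a fixed point and the claim will follow easily since every element of $\mathcal{S}_i\setminus\{\mathfrak{m}_i\}$ must be a fixed point as well (any non-fixed $x$ would satisfy $\pi_x=Tx$, and following $\pi$ from $x$ we would eventually map onto $\mathfrak{m}_i$, giving two preimages of $\mathfrak{m}_i$). If $y_i\neq\mathfrak{m}_i$, I would argue inductively that $y_i,Ty_i,T^2 y_i,\ldots$ all lie in $\mathcal{S}^2(\boldsymbol{\pi})$ until the first $k$ with $T^k y_i=\mathfrak{m}_i$; if any intermediate $T^j y_i$ were in $\mathcal{S}^1(\boldsymbol{\pi})$, that element would have two preimages under $\pi$ (itself and $T^{j-1}y_i$), contradicting injectivity. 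Hence the cycle of $\boldsymbol{\pi}|_{\mathcal{S}_i}$ containing $\mathfrak{m}_i$ is exactly $\mathfrak{m}_i\to y_i\to Ty_i\to\cdots\to T^{k-1}y_i\to\mathfrak{m}_i$, of some length $\ell_i\geq 2$, and by the same injectivity argument every element of $\mathcal{S}_i$ not in this cycle must be a fixed point of $\boldsymbol{\pi}$.

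Finally, with the cycle structure in hand the identity is a one-line calculation. In the case $y_i=\mathfrak{m}_i$ one sets $\ell_i=1$, so $\boldsymbol{\pi}|_{\mathcal{S}_i}$ is the identity, $\mathop{\mathrm{sgn}}(\boldsymbol{\pi}|_{\mathcal{S}_i})=1$, and $|\mathcal{S}^1(\boldsymbol{\pi})\cap\mathcal{S}_i|=|\mathcal{S}_i|-1$. In the case $\ell_i\geq 2$, the cycle through $\mathfrak{m}_i$ contributes $\mathop{\mathrm{sgn}}(\boldsymbol{\pi}|_{\mathcal{S}_i})=(-1)^{\ell_i-1}$, the number of fixed points in $\mathcal{S}_i\setminus\mathcal{M}$ equals $|\mathcal{S}_i|-\ell_i$ (since the whole cycle, including $\mathfrak{m}_i$, has size $\ell_i$), and hence
\begin{equation*}
\mathop{\mathrm{sgn}}(\boldsymbol{\pi}|_{\mathcal{S}_i})\,(-1)^{|\mathcal{S}^1(\boldsymbol{\pi})\cap\mathcal{S}_i|}=(-1)^{\ell_i-1+|\mathcal{S}_i|-\ell_i}=(-1)^{|\mathcal{S}_i|-1}.
\end{equation*}
Multiplying over $i=1,\dots,n$ yields the claim. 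The main obstacle is the combinatorial argument in the second paragraph ruling out additional non-trivial cycles; everything else is bookkeeping.
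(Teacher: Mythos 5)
Your proof is correct and follows essentially the same route as the paper: restrict $\boldsymbol\pi$ to each well $\mathcal S_i$, identify the unique non-trivial cycle through $\mathfrak m_i$ as $\mathfrak m_i\mapsto y_i\mapsto Ty_i\mapsto\cdots\mapsto T^{k-1}y_i\mapsto\mathfrak m_i$, observe that everything else in $\mathcal S_i$ is a fixed point, and compute $\mathop{\mathrm{sgn}}(\boldsymbol\pi)=(-1)^{\sum k_i}$ and $f_0(\boldsymbol\pi)=N-n-\sum k_i$. The one place you go beyond the paper is in explicitly justifying (via injectivity of $\boldsymbol\pi$) that no other non-trivial cycles can exist in $\mathcal S_i$; the paper asserts the cycle decomposition without comment, so your version is slightly more careful but not a genuinely different argument. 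A marginally cleaner way to rule out other non-trivial cycles is to note that a non-trivial $\pi$-cycle avoiding $\mathfrak m_i$ would force $T^l x=x$ for some $x\neq\mathfrak m_i$ and $l\geq1$, contradicting the monotonicity of the $T$-orbit.
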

\begin{proof} Fix $1\leq i\leq n$ and let $\pi_{\mathfrak{m}_i}=y_i\in\mathcal{S}_i$. 
From this condition and from the definition of $\Pi_0$ it follows that for every $x\in\mathcal{S}_i$
we have $\pi_x\in\mathcal{S}_i$. Hence, we can write $\boldsymbol{\pi}$ as a product of $n$
disjoint permutation cycles
\begin{align}
 \boldsymbol{\pi}=\xi_1\cdot\ldots\cdot\xi_n,
\end{align}
where each cycle $\xi_i$ acts only on $\mathcal{S}_i$. Define $k_i=k(y_i)\geq 0$ as the number of steps the deterministic motion needs to reach $\mathfrak{m}_i$ if started in $y_i$
(see the proof of Lemma \ref{lemmapermu1}). Then we can write each cycle $\xi_i$ as
\begin{align}
 \xi_i=\begin{cases}
        y_i\mapsto Ty_i\mapsto \cdots\mapsto T^{k_i-1}y_i\mapsto \mathfrak{m}_i\mapsto y_i,& k_i\geq 1,\\
        \mathfrak{m}_i\mapsto \mathfrak{m}_i,& k_i=0.
       \end{cases}
\end{align}
If case $k_i=0$ then $\boldsymbol{\pi}$ acts as identity in $\mathcal{S}_i$, i.e.\ $\pi_x=x$
for all $x\in\mathcal{S}_i$. For the consistency of notation in this case, however, we do not omit the identity cycle $\xi_i$ having in mind that
the sign of an identity cycle is $1$.
Let $\vert\xi_i\vert$ denote the length of the $i$-th cycle. Then  $\vert\xi_i\vert=k_i+1$ and
\begin{align}
 \mathop{\mathrm{sgn}}(\boldsymbol{\pi})=(-1)^{\vert\xi_1\vert-1}\cdot\ldots\cdot(-1)^{\vert\xi_n\vert-1}=(-1)^{\sum_{i=1}^n k_i}.
\end{align}
On the other hand, from the definition of $f_0$ it follows that
\begin{align}
 f_0(\boldsymbol{\pi})=\sum_{i=1}^n\left(\vert\mathcal{S}_i\vert-(k_i+1)\right)=N-n-\sum_{i=1}^n k_i
\end{align}
which proves the claim.
\end{proof}

We construct a further decomposition of the set $\Pi_0$. 
Let $S_n$ denote the set of all permutations 
$\boldsymbol{\sigma}\colon \{1,\dots,n\}\rightarrow\{1,\dots,n\}$
and set
\begin{align}
 \Pi(\boldsymbol{\sigma})\coloneqq \{\boldsymbol{\pi}\in\Pi_0\colon \pi_{\mathfrak{m}_i}\in\mathcal{S}_{\sigma_i},\ 1\leq i\leq n\}.
\end{align}
Then it is easy to see that
\begin{align}
 \Pi_0=\bigsqcup_{\boldsymbol{\sigma}\in S_n}\Pi(\boldsymbol{\sigma}).
\end{align}

\begin{lem}
\label{lemmapermu5}
Let $\boldsymbol{\sigma}\in S_n$. Then, for every $\boldsymbol{\pi}\in\Pi(\boldsymbol{\sigma})$
 \begin{align}
 \mathop{\mathrm{sgn}}(\boldsymbol{\sigma})(-1)^{N-n}=\mathop{\mathrm{sgn}}(\boldsymbol{\pi})(-1)^{f_0(\boldsymbol{\pi})} .
 \end{align}
\end{lem}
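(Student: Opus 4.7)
The idea is to repeat the cycle bookkeeping of Lemma \ref{lemmapermu4}, but track how the underlying permutation $\boldsymbol{\sigma}$ on $\{1,\dots,n\}$ rearranges the minima. Throughout, write $\boldsymbol{\sigma}$ as a product of disjoint cycles $c\in\mathcal{C}$, let $k_c$ denote the length of the cycle $c$, and recall the standard identity $\mathop{\mathrm{sgn}}(\boldsymbol{\sigma})=(-1)^{n-|\mathcal{C}|}$.

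First I would analyse the structure of an arbitrary $\boldsymbol{\pi}\in\Pi(\boldsymbol{\sigma})$. Fix a $\boldsymbol{\sigma}$-cycle $c=(i_1\to i_2\to\cdots\to i_{k_c}\to i_1)$ and put $y_{i_j}\coloneqq \pi_{\mathfrak{m}_{i_j}}\in\mathcal{S}_{i_{j+1}}$. Because $\boldsymbol{\pi}\in\Pi_0$, each $x\in\mathcal{S}\setminus\mathcal{M}$ is either fixed or satisfies $\pi_x=Tx$, and since $\pi$ is bijective, a short argument shows that within every well $\mathcal{S}_{i_{j+1}}$ the non-fixed, non-minimum elements form a single $T$-chain
\begin{equation*}
y_{i_j}\to Ty_{i_j}\to\cdots\to T^{\ell_j-1}y_{i_j}\to T^{\ell_j}y_{i_j}=\mathfrak{m}_{i_{j+1}},
\end{equation*}
where $\ell_j\coloneqq k(y_{i_j})$ is the number of deterministic steps from $y_{i_j}$ to $\mathfrak{m}_{i_{j+1}}$ (as in the proof of Lemma \ref{lemmapermu1}). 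Consequently $\boldsymbol{\pi}$ has one long cycle running through the minima of $c$ of length $K_c\coloneqq k_c+\sum_{j=1}^{k_c}\ell_j$, together with fixed points on every remaining non-minimum in the wells $\mathcal{S}_{i_1},\dots,\mathcal{S}_{i_{k_c}}$.

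Next I would compute the two parities separately. Writing $N_c\coloneqq\sum_{j=1}^{k_c}|\mathcal{S}_{i_j}|$ (so $\sum_{c}N_c=N$ and $\sum_{c}k_c=n$), the above structure yields
\begin{equation*}
\mathop{\mathrm{sgn}}(\boldsymbol{\pi})=\prod_{c\in\mathcal{C}}(-1)^{K_c-1},\qquad
f_0(\boldsymbol{\pi})=\sum_{c\in\mathcal{C}}\bigl((N_c-k_c)-\textstyle\sum_{j=1}^{k_c}\ell_j\bigr)=\sum_{c\in\mathcal{C}}(N_c-K_c).
\end{equation*}
Multiplying, the $K_c$ contributions cancel and one obtains
\begin{equation*}
\mathop{\mathrm{sgn}}(\boldsymbol{\pi})(-1)^{f_0(\boldsymbol{\pi})}
=(-1)^{\sum_c(K_c-1)+\sum_c(N_c-K_c)}
=(-1)^{N-|\mathcal{C}|}.
\end{equation*}
Finally, combining this with $\mathop{\mathrm{sgn}}(\boldsymbol{\sigma})=(-1)^{n-|\mathcal{C}|}$ gives $\mathop{\mathrm{sgn}}(\boldsymbol{\sigma})(-1)^{N-n}=(-1)^{N-|\mathcal{C}|}$, which is exactly the claim.

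The main obstacle is the first step: verifying that, on each well $\mathcal{S}_{i_{j+1}}$, the non-fixed non-minima truly line up into a single $T$-chain starting at $y_{i_j}$ and ending one step before $\mathfrak{m}_{i_{j+1}}$. Once this structural lemma is in place (it is a short bijectivity argument using the monotonicity of $\{T^kx\}$ recorded after \eqref{eq:2}), the sign computation is a straightforward bookkeeping exercise that specialises to Lemma \ref{lemmapermu4} when $\boldsymbol{\sigma}=\mathbf{id}$.
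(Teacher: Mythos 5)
Your proposal is correct, but it takes a different route from the paper's. The paper proves Lemma~\ref{lemmapermu5} by \emph{reduction} to Lemma~\ref{lemmapermu4}: it factorizes $\boldsymbol{\pi}=\boldsymbol{\pi}_\sigma\circ\tilde{\boldsymbol{\pi}}$ with $\tilde{\boldsymbol{\pi}}\in\Pi_0$ satisfying $\tilde{\pi}_{\mathfrak{m}_i}\in\mathcal{S}_i$ and with $\boldsymbol{\pi}_\sigma$ permuting only the images $y_1,\dots,y_n$ of the minima according to $\boldsymbol{\sigma}$; then it uses $\mathop{\mathrm{sgn}}(\boldsymbol{\pi})=\mathop{\mathrm{sgn}}(\boldsymbol{\pi}_\sigma)\mathop{\mathrm{sgn}}(\tilde{\boldsymbol{\pi}})$, $\mathop{\mathrm{sgn}}(\boldsymbol{\pi}_\sigma)=\mathop{\mathrm{sgn}}(\boldsymbol{\sigma})$ and $f_0(\boldsymbol{\pi})=f_0(\tilde{\boldsymbol{\pi}})$ to conclude from the already established identity for $\tilde{\boldsymbol{\pi}}$. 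You instead redo the cycle count from scratch in the general case: each $\boldsymbol{\sigma}$-cycle $c$ of length $k_c$ lifts to one $\boldsymbol{\pi}$-cycle of length $K_c=k_c+\sum_j\ell_j$ plus fixed points, giving $\mathop{\mathrm{sgn}}(\boldsymbol{\pi})(-1)^{f_0(\boldsymbol{\pi})}=(-1)^{N-|\mathcal{C}|}=\mathop{\mathrm{sgn}}(\boldsymbol{\sigma})(-1)^{N-n}$; your arithmetic here checks out, including the degenerate cases $\ell_j=0$ and the $1$-cycles of $\boldsymbol{\sigma}$. What the paper's factorization buys is that the within-well chain structure needs to be examined only once (in Lemma~\ref{lemmapermu4}); what your version buys is a single self-contained computation that subsumes Lemma~\ref{lemmapermu4} as the case $\boldsymbol{\sigma}=\mathbf{id}$, and it has the merit of flagging explicitly the structural fact both arguments rely on --- that in each well the non-fixed non-minima of a $\Pi_0$-permutation form a single $T$-chain ending at the minimum --- which the paper asserts without proof in Lemma~\ref{lemmapermu4}. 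Your bijectivity-plus-monotonicity sketch for that fact is the right argument and would be worth writing out, since it is the only non-mechanical step in either proof.
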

\begin{proof} 
Let $\boldsymbol{\sigma}=\zeta_1\cdot\ldots\cdot\zeta_L$ be a representation of 
$\boldsymbol{\sigma}$ as a product of  $1\leq L\leq n$ disjoint cycles.   
In particular, $|\zeta_1|+\cdots+|\zeta_L|=n$.

From the definition of $\Pi(\boldsymbol{\sigma})$ it follows that there exists a collection of states $(y_1,\ldots,y_n)\in\mathfrak{T}_n$ such that
\begin{align}
 \pi_{\mathfrak{m}_1}=y_{\sigma_1}\in\mathcal{S}_{\sigma_1},\ldots,\pi_{\mathfrak{m}_n}=y_{\sigma_n}\in\mathcal{S}_{\sigma_n}.
\end{align}
Now, every $\boldsymbol{\pi}\in\Pi_0$ can be written as a composition $\boldsymbol{\pi}=\boldsymbol{\pi}_\sigma\circ\tilde{\boldsymbol{\pi}}$
where $\tilde{\boldsymbol{\pi}}\in\Pi_0$ such that
$\tilde{\pi}_{\mathfrak{m}_i}\in\mathcal{S}_i$ and where $\boldsymbol{\pi}_\sigma$ only acts on the states
$y_1=\tilde{\pi}_{\mathfrak{m}_1},\dots, y_n=\tilde{\pi}_{\mathfrak{m}_n}$ according to the permutation $\boldsymbol{\sigma}$.
So let $\sigma(y_1,\ldots,y_n)$ denote a cycle representation of $\boldsymbol{\pi}_\sigma$.
With the notations introduced in the previous lemmas we then can write $\boldsymbol{\pi}$ as
\begin{align}
 \boldsymbol{\pi}=\sigma(y_1,\ldots,y_n)
\Big(\mathfrak{m}_1\mapsto y_1\mapsto\cdots\mapsto T^{k_1-1}y_1\Big)\cdot\ldots\cdot
\Big(\mathfrak{m}_n\mapsto y_n\mapsto \cdots\mapsto T^{k_n-1}y_n\Big).
\end{align}
Keep in mind that the numbers $k_i$ also depend on $y_i$ and that one always has $T^{k_i}y_i=\mathfrak{m}_i$. 
Since $f_0$ does not take $\mathfrak{m}_1,\ldots,\mathfrak{m}_n$ into account
we have $f_0(\boldsymbol{\pi})=f_0(\tilde{\boldsymbol{\pi}})$.
Combining that with the equality $\mathop{\mathrm{sgn}}(\boldsymbol{\sigma})=\mathop{\mathrm{sgn}}(\boldsymbol{\pi}_\sigma)$ 
and Lemma \ref{lemmapermu5} yields
\begin{align}
 \mathop{\mathrm{sgn}}(\boldsymbol{\pi})(-1)^{f_0(\boldsymbol{\pi})}=\mathop{\mathrm{sgn}}(\boldsymbol{\pi}_\sigma)\mathop{\mathrm{sgn}}(\tilde{\boldsymbol{\pi}})(-1)^{f_0(\tilde{\boldsymbol{\pi}})}=\mathop{\mathrm{sgn}}(\boldsymbol{\sigma})(-1)^{N-n}.
\end{align}
\end{proof}

Lemma \ref{lemmapermu4} allows us to rewrite the first summand in \eqref{charpoly2} as
\begin{equation}
\begin{aligned}
\label{charpoly5}
 \sum_{\boldsymbol{\pi}\in\Pi_0}\mathop{\mathrm{sgn}}(\boldsymbol{\pi})(-1)^{f_0(\boldsymbol{\pi})}
 \prod_{i=1}^na^\varepsilon_{\mathfrak{m}_i,\pi_{\mathfrak{m}_i}}(\lambda) 
 &=\sum_{\boldsymbol{\sigma}\in S_n}\sum_{\boldsymbol{\pi}\in\Pi(\boldsymbol{\sigma})} 
\mathop{\mathrm{sgn}}(\boldsymbol{\pi})(-1)^{f_0(\boldsymbol{\pi})}\prod_{i=1}^na^\varepsilon_{\mathfrak{m}_i,\pi_{\mathfrak{m}_i}}(\lambda)\\
&=   \sum_{\boldsymbol{\sigma}\in S_n}\mathop{\mathrm{sgn}}(\boldsymbol{\sigma})(-1)^{N-n}
\sum_{\boldsymbol{\pi}\in\Pi(\boldsymbol{\sigma})}\prod_{i=1}^na^\varepsilon_{\mathfrak{m}_i,\pi_{\mathfrak{m}_i}}(\lambda)\\
&=   (-1)^{N-n}\sum_{\boldsymbol{\sigma}\in S_n}\mathop{\mathrm{sgn}}(\boldsymbol{\sigma})
\sum_{y_1\in\mathcal{S}_{\sigma_1}}\cdots\sum_{y_n\in\mathcal{S}_{\sigma_n}}
\prod_{i=1}^na^\varepsilon_{\mathfrak{m}_i,y_i}(\lambda)\\
&=  (-1)^{N-n}\sum_{\boldsymbol{\sigma}\in S_n}\mathop{\mathrm{sgn}}(\boldsymbol{\sigma})
\prod_{i=1}^n\sum_{y\in\mathcal{S}_{\sigma_i}}a^\varepsilon_{\mathfrak{m}_i,y}(\lambda).
\end{aligned}
\end{equation}
This representation makes clear why we also included the term $(-1)^{N-n}$ into the normalization factor 
for $P^\varepsilon$ in Proposition \ref{proppolynome}.

The next lemma provides us with the crucial estimate.

\begin{lem}
\label{abschpoly}
There exist constants $\varepsilon_0>0$ and $\tilde{C}=\tilde{C}(h,\delta,R,\alpha,n)>1$ such that 
for every $\lambda\in\mathbb{C}$ and every $0<\varepsilon<\varepsilon_0$ 
\begin{align}\label{charpoly3}
 \Big|\sum_{\boldsymbol{\sigma}\in S_n}\mathop{\mathrm{sgn}}(\boldsymbol{\sigma})
 \prod_{i=1}^n\sum_{y\in\mathcal{S}_{\sigma_i}}a^\varepsilon_{\mathfrak{m}_i,y}(\lambda)
 -\sum_{\boldsymbol{\sigma}\in S_n}\mathop{\mathrm{sgn}}(\boldsymbol{\sigma})\prod_{i=1}^n a^{\mathbf{Q}}_{i,\sigma_i}(\lambda) \Big|
  \leq\tilde{C} (\tilde C+|\lambda|)^{n-1}\varepsilon^\alpha.
\end{align}
\end{lem}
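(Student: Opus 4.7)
My plan is to reduce the statement to Lemma \ref{approxqij} combined with a standard telescoping estimate for a product of $n$ close factors.

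First, I would observe that by the definition of $\mathbf{A}^\varepsilon(\lambda)=\frac{1}{\varepsilon^\alpha h}(\mathbf{P}^\varepsilon-\mathbf{I}_N)-\lambda\mathbf{I}_N$, for each fixed row index $\mathfrak{m}_i\in\mathcal M$ and each $j\in\{1,\dots,n\}$ we have
\begin{equation*}
\sum_{y\in\mathcal{S}_j}a^\varepsilon_{\mathfrak{m}_i,y}(\lambda)
=\frac{1}{\varepsilon^\alpha h}\sum_{y\in\mathcal{S}_j}\bigl(p^\varepsilon_{\mathfrak{m}_i,y}-\delta^y_{\mathfrak m_i}\bigr)-\lambda\,\delta^j_i.
\end{equation*}
Applying Lemma \ref{approxqij} directly to the right hand side (the two cases $j=i$ and $j\neq i$ are treated there separately), together with the obvious equality $a^{\mathbf Q}_{i,j}(\lambda)=q_{i,j}-\lambda\delta^j_i$, I would conclude that
\begin{equation*}
\sum_{y\in\mathcal{S}_j}a^\varepsilon_{\mathfrak{m}_i,y}(\lambda)=a^{\mathbf Q}_{i,j}(\lambda)+E^\varepsilon_{i,j},\qquad |E^\varepsilon_{i,j}|\leq \hat C\varepsilon^\alpha,
\end{equation*}
with $\hat C$ independent of $\lambda$, $\varepsilon$, $i$, $j$.

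Next I would fix $\boldsymbol{\sigma}\in S_n$ and bound the difference of the products using the telescoping identity
\begin{equation*}
\prod_{i=1}^n \alpha_i-\prod_{i=1}^n \beta_i=\sum_{i=1}^n(\alpha_i-\beta_i)\prod_{k<i}\alpha_k\prod_{k>i}\beta_k,
\end{equation*}
applied with $\alpha_i=\sum_{y\in\mathcal S_{\sigma_i}}a^\varepsilon_{\mathfrak m_i,y}(\lambda)$ and $\beta_i=a^{\mathbf Q}_{i,\sigma_i}(\lambda)$. The previous step gives $|\alpha_i-\beta_i|\leq \hat C\varepsilon^\alpha$. Since $q_{i,j}$ is bounded by a constant $C_0=C_0(\delta,R,\alpha)$ and the error term is of order $\mathcal O(\varepsilon^\alpha)$, we also have the uniform bound $|\alpha_i|, |\beta_i|\leq C_0+|\lambda|+1$ for all $\varepsilon$ small enough. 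Hence
\begin{equation*}
\Bigl|\prod_{i=1}^n\alpha_i-\prod_{i=1}^n\beta_i\Bigr|\leq n\,\hat C\,(C_0+1+|\lambda|)^{n-1}\varepsilon^\alpha.
\end{equation*}

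Finally I would sum this estimate over all $\boldsymbol{\sigma}\in S_n$ (using $|\mathop{\mathrm{sgn}}(\boldsymbol{\sigma})|=1$) and absorb the factor $n!\cdot n\cdot \hat C$ together with the shift $C_0+1$ into a single constant $\tilde C=\tilde C(h,\delta,R,\alpha,n)>1$, obtaining exactly the claimed inequality $\tilde C(\tilde C+|\lambda|)^{n-1}\varepsilon^\alpha$.

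I do not expect a real obstacle here: the only point that deserves attention is verifying that the uniform-in-$\lambda$ structure of the error is preserved, which is automatic because the $\lambda$-dependent term $-\lambda\delta^j_i$ appears identically in both $\sum_y a^\varepsilon_{\mathfrak m_i,y}(\lambda)$ and in $a^{\mathbf Q}_{i,\sigma_i}(\lambda)$, so $\lambda$ drops out of every error term $E^\varepsilon_{i,j}$ and only enters through the bounded factors in the telescoping expansion, yielding the power $(|\lambda|+\tilde C)^{n-1}$ rather than $(|\lambda|+\tilde C)^{n}$.
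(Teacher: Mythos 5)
Your proof is correct. The identity
\begin{equation*}
\sum_{y\in\mathcal{S}_j}a^\varepsilon_{\mathfrak{m}_i,y}(\lambda)
=\frac{1}{\varepsilon^\alpha h}\sum_{y\in\mathcal{S}_j}\bigl(p^\varepsilon_{\mathfrak{m}_i,y}-\delta^y_{\mathfrak m_i}\bigr)-\lambda\,\delta^j_i
=a^{\mathbf Q}_{i,j}(\lambda)+E^\varepsilon_{i,j}
\end{equation*}
holds (the Kronecker-delta term in the sum is identically zero when $j\neq i$, so the two cases of Lemma~\ref{approxqij} apply verbatim), the error $E^\varepsilon_{i,j}$ is manifestly independent of $\lambda$, and the telescoping estimate then gives exactly the right power $(\tilde C+|\lambda|)^{n-1}$.

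Your route is genuinely leaner than the paper's. The paper applies a full generalized binomial expansion $\prod_i(c_i+d_i)=\sum_{k=0}^n G_k$ with $c_i$ the error and $d_i$ the main term, tracks all higher-order terms $G_k$, and---rather than invoking the diagonal estimate of Lemma~\ref{approxqij} directly---converts the diagonal error into a sum of off-diagonal ones via the row-sum identity $\sum_{y\in\mathcal S_i}(\mathbf P^\varepsilon-\mathbf I_N)_{x,y}=-\sum_{j\neq i}\sum_{y\in\mathcal S_j}(\mathbf P^\varepsilon-\mathbf I_N)_{x,y}$, which forces a case split over the fixed-point set $F(\boldsymbol\sigma)$. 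You instead use the telescoping identity, which is precisely the $G_1$ (first-order) contribution of the binomial expansion and already dominates the bound, and you use both cases of Lemma~\ref{approxqij} directly, which renders the fixed-point bookkeeping unnecessary. The two arguments produce constants of the same type (dominated by $n!\,n\hat C$ and $\|\mathbf Q\|_\infty$), so nothing is lost; what is gained is a shorter and more transparent proof, at the mild cost of having to check that the $-\lambda\delta^j_i$ term cancels between $\alpha_i$ and $\beta_i$, which you do explicitly.
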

\begin{proof} 
First we need a generalization of the binomial formula that can be easily derived by a straightforward calculation.
Fix numbers $c_1,\ldots,c_n,d_1,\ldots,d_n\in\mathbb{R}$. Then
 \begin{align}\label{generalizedbinomial}
 \prod_{i=1}^n(c_i+d_i)=\sum_{i=0}^n G_i 
 \end{align}
 where
 \begin{align}
 G_0=\prod_{l=1}^n d_l\quad \text{and}\quad G_i=\sum_{1\leq j_1<\ldots<j_i\leq n}\prod_{k\in\left\{j_1,\ldots,j_i\right\}}c_k\prod_{\substack{l\in\left\{1,\ldots,n\right\}\\l\notin\left\{j_1,\ldots,j_i\right\}}}d_l.
 \end{align}

\noindent
Now, abbreviate the left hand side of \eqref{charpoly3} with $\text{(LHS)}$. We apply \eqref{generalizedbinomial} 
with $c_i=\sum_{y\in\mathcal{S}_{\sigma_i}}a^\varepsilon_{\mathfrak{m}_i,y}(\lambda)-a^\mathbf{Q}_{i,\sigma_i}(\lambda)$ and 
$d_i=a^\mathbf{Q}_{i,\sigma_i}(\lambda)$
to obtain
\begin{equation}
\label{charpoly4}
\begin{aligned}
 \text{(LHS)} 
 &\leq \sum_{\boldsymbol{\sigma}\in S_n}\Bigg|\prod_{i=1}^n\Bigg(\sum_{y\in\mathcal{S}_{\sigma_i}}a^\varepsilon_{\mathfrak{m}_i,y}(\lambda)-a^{\mathbf{Q}}_{i,\sigma_i}(\lambda)+a^{\mathbf{Q}}_{i,\sigma_i}(\lambda)\Bigg)-\prod_{i=1}^n a^{\mathbf{Q}}_{i,\sigma_i}(\lambda)\Bigg|\\
       &=    \sum_{\boldsymbol{\sigma}\in S_n}\Bigg|\prod_{i=1}^n c_i+\sum_{i=1}^{n-1}\sum_{1\leq j_1<\ldots<j_i\leq n}\prod_{k\in\left\{j_1,\ldots,j_i\right\}}c_k\prod_{\substack{l\in\left\{1,\ldots,n\right\}\\l\notin\left\{j_1,\ldots,j_i\right\}}}d_l\Bigg|\\
       &\leq \sum_{\boldsymbol{\sigma}\in S_n}\prod_{i=1}^n\Bigg|\sum_{y\in\mathcal{S}_{\sigma_i}}a^\varepsilon_{\mathfrak{m}_i,y}(\lambda)-a^{\mathbf{Q}}_{i,\sigma_i}(\lambda)\Bigg|+ \\
       &\quad\sum_{\boldsymbol{\sigma}\in S_n}\sum_{i=1}^{n-1}\sum_{1\leq j_1<\ldots<j_i\leq n}\underbrace{\prod_{k\in\left\{j_1,\ldots,j_i\right\}}\Bigg|\sum_{y\in\mathcal{S}_{\sigma_k}}a^\varepsilon_{\mathfrak{m}_k,y}(\lambda)-a^\mathbf{Q}_{k,\sigma_k}(\lambda)\Bigg|\prod_{\substack{l\in\left\{1,\ldots,n\right\}\\l\notin\{j_1,\ldots,j_i\}}}\Big| a^\mathbf{Q}_{l,\sigma_l}(\lambda)\Big|}_{(*)}.\\
\end{aligned}
\end{equation}
Since each row sum of $\mathbf{P}^\varepsilon-\mathbf{I}_{N}$ is equal to $0$, we have for every $x\in\mathcal{S}$
\begin{align}
 \sum_{y\in \mathcal{S}_i} (\mathbf{P}^\varepsilon-\mathbf{I}_N)_{x,y}
 =-\sum_{\substack{j=1\\j\neq i}}^n\sum_{y\in\mathcal{S}_j}(\mathbf{P}^\varepsilon-\mathbf{I}_{N})_{x,y}.
\end{align}
For $\boldsymbol{\sigma}\in S_n$ define $F(\boldsymbol{\sigma})\coloneqq \{1\leq i\leq n\colon \sigma_i=i\}$ 
as the set of all the fixed points of $\boldsymbol{\sigma}$.
Then, with the help of Lemma \ref{approxqij} we derive for some constant $C>0$
\begin{equation}
\begin{aligned}
 \sum_{\boldsymbol{\sigma}\in S_n}\prod_{i=1}^n\Bigg|
 \sum_{y\in\mathcal{S}_{\sigma_i}}a^\varepsilon_{\mathfrak{m}_i,y}(\lambda)-a^{\mathbf{Q}}_{i,\sigma_i}(\lambda)\Bigg|&\\
&\hspace*{-4.5cm}=\sum_{\boldsymbol{\sigma}\in S_n}
\Bigg[\prod_{\substack{i=1\\i\notin F(\boldsymbol{\sigma})}}^n
\Bigg|\sum_{y\in\mathcal{S}_{\sigma_i}}a^\varepsilon_{\mathfrak{m}_i,y}-a^{\mathbf{Q}}_{i,\sigma_i}\Bigg|\cdot
\prod_{\substack{i=1\\i\in F(\boldsymbol{\sigma})}}^n
\Bigg|-\sum_{\substack{j=1\\j\neq i}}^n\sum_{y\in\mathcal{S}_j}a^\varepsilon_{\mathfrak{m}_i,y}-\lambda-
\Bigg(-\sum_{\substack{j=1\\j\neq i}}^n q_{i,j}-\lambda\Bigg)\Bigg|\Bigg]\\
&\hspace*{-4.5cm}=\sum_{\boldsymbol{\sigma}\in S_n}\Bigg[\prod_{\substack{i=1\\i\notin F(\boldsymbol{\sigma})}}^n
\Bigg|\sum_{y\in\mathcal{S}_{\sigma_i}}\frac{1}{\varepsilon^\alpha h}p^\varepsilon_{\mathfrak{m}_i,y}-q_{i,\sigma_i}\Bigg|\cdot
\prod_{\substack{i=1\\i\in F(\boldsymbol{\sigma})}}^n
\Bigg|-\sum_{\substack{j=1\\j\neq i}}^n\sum_{y\in\mathcal{S}_j}\frac{1}{\varepsilon^\alpha h}p^\varepsilon_{\mathfrak{m}_i,y}
+\sum_{\substack{j=1\\j\neq i}}^n q_{i,j}\Bigg|\Bigg]\\
&\hspace*{-4.5cm}\leq	\sum_{\boldsymbol{\sigma}\in S_n}\left(C\varepsilon^\alpha\right)^{n-\vert F(\boldsymbol{\sigma})\vert}\left(nC\varepsilon^\alpha\right)^{\vert F(\boldsymbol{\sigma})\vert}\\
&\hspace*{-4.5cm}\leq	n!\left(nC\varepsilon^\alpha\right)^n.
\end{aligned}
\end{equation}

Similarly one can estimate the expression $(\ast)$ in \eqref{charpoly4} as follows:
\begin{equation}
\begin{aligned}
(\ast) &\leq \prod_{\substack{k\in\{j_1,\ldots,j_i\}\\k\notin F(\boldsymbol{\sigma)}}}
\Big|\sum_{y\in\mathcal{S}_k}\frac{1}{\varepsilon^\alpha h}p^\varepsilon_{\mathfrak{m}_k,y}-q_{k,\sigma_k}
\Big|\cdot 
\prod_{\substack{l\in \{1,\ldots,n \}\\l\notin\{j_1,\ldots,j_i\}\\l\notin F(\boldsymbol{\sigma})}} 
|q_{l,\sigma_l}|\\
    &\quad\cdot\prod_{\substack{k\in\{j_1,\ldots,j_i\}\\k\in F(\boldsymbol{\sigma)}}}
    \Big|-\sum_{\substack{s\neq k}}\sum_{y\in\mathcal{S}_s}\frac{1}{\varepsilon^\alpha h}p^\varepsilon_{\mathfrak{m}_k,y}
    +\sum_{s\neq k}q_{k,s}\Big|
    \cdot \prod_{\substack{l\in\{1,\ldots,n\}\\l\notin\{j_1,\ldots,j_i\}\\l\notin F(\boldsymbol{\sigma})}}
    | q_{l,l}-\lambda|\\
    &\leq(C\varepsilon^\alpha)^{ |\{k\in \{j_1,\ldots,j_i\}\colon\sigma_k\neq k|} 
    (nC\varepsilon^\alpha)^{|\{k\in\{j_1,\ldots,j_i\}\colon \sigma_k= k\}|}\\
    &\quad\times (\|\mathbf Q\|_\infty)^{\{k\in\{1,\ldots,n\}\setminus\{j_1,\ldots,j_i\}\colon \sigma_k\neq k\}}
    ( \|\mathbf Q\|_\infty+|\lambda|)^{\{k\in\{1,\ldots,n\}\setminus\{j_1,\ldots,j_i\}\colon\sigma_k= k\}}\\
    &\leq (nC\varepsilon^\alpha)^i( \|\mathbf Q\|_\infty+|\lambda|)^{n-i}.
\end{aligned}
\end{equation}
We finish the proof by continuing in (\ref{charpoly4}):
\begin{align}
\text{(LHS)} &\leq n!\cdot (nC\varepsilon^\alpha)^n
+n!\sum_{i=1}^{n-1} \binom{n}{i} (nC\varepsilon^\alpha)^i(\|\mathbf Q\|_\infty+|\lambda|)^{n-i}
\leq \tilde{C}\max_{1\leq k\leq n-1}( \|\mathbf Q\|_\infty+|\lambda|)^k\varepsilon^\alpha.
\end{align}
Assigning $\tilde C$ to be the maximum of $\tilde C$ and $1+\|\mathbf Q\|_\infty$ yields the result.
\end{proof}

We now complete the proof of Proposition \ref{proppolynome}.

\medskip

\noindent
\textit{Proof of Proposition \ref{proppolynome}.} 
First, we combine the equations \eqref{charpoly2} and \eqref{charpoly5} to obtain
\begin{equation}
\begin{aligned}
(-\varepsilon^\alpha h)^{N-n}P^\varepsilon(\lambda) 
&=\sum_{\boldsymbol{\sigma}\in S_n}\mathop{\mathrm{sgn}}(\boldsymbol{\sigma})
\prod_{i=1}^n\sum_{y\in\mathcal{S}_{\sigma_i}}a^\varepsilon_{\mathfrak{m}_i,y}\\
&\hspace*{-1cm}+(-1)^{N-n}\sum_{\boldsymbol{\pi}\in\Pi_0}\mathop{\mathrm{sgn}}(\boldsymbol{\pi})(-1)^{f_0(\boldsymbol{\pi})}
\Big(\sum_{l=0}^{N-n-1}\beta^\varepsilon_{l,0,\boldsymbol{\pi}}(\lambda) (\varepsilon^{\alpha}h)^{N-n-l}\Big)
\prod_{i=1}^na^\varepsilon_{\mathfrak{m}_i,\pi_{\mathfrak{m}_i}}(\lambda)\\
&\hspace*{-1cm}+(-1)^{N-n}\sum_{p=1}^{N-n}
\sum_{\boldsymbol{\pi}\in\Pi_p}\mathop{\mathrm{sgn}}(\boldsymbol{\pi})(-1)^{f_p(\boldsymbol{\pi})}
\Big(\sum_{l=0}^{N-n-p}\beta^\varepsilon_{l,p,\boldsymbol{\pi}}(\lambda) (\varepsilon^{\alpha}h)^{N-n-l}\Big)
\prod_{x\in\mathcal{S}^3(\boldsymbol{\pi})}a^\varepsilon_{x,\pi_x}.
\end{aligned}
\end{equation}
An application of Lemma \ref{abschpoly} then yields
\begin{equation}
\label{charpoly6}
\begin{aligned}
\Big|(-\varepsilon^\alpha h)^{N-n}P^\varepsilon(\lambda)-P^{\boldsymbol{Q}}(\lambda)\Big| 
 &	\leq \tilde{C}(\tilde C+|\lambda|)^{n-1}\varepsilon^\alpha\\
 &\hspace{-0cm} +\sum_{\boldsymbol{\pi}\in\Pi_0}\Big|\sum_{l=0}^{N-n-1}\beta^\varepsilon_{l,0,\boldsymbol{\pi}}(\lambda)
 (\varepsilon^{\alpha}h)^{N-n-l}\prod_{i=1}^n   a^\varepsilon_{\mathfrak{m}_i,\pi_{\mathfrak{m}_i}}(\lambda)\Big|\\
 &\hspace{-0cm} +\sum_{p=1}^{N-n}\sum_{\boldsymbol{\pi}\in\Pi_p}
 \Big|\sum_{l=0}^{N-n-p}\beta^\varepsilon_{l,p,\boldsymbol{\pi}}(\lambda)
(\varepsilon^{\alpha}h)^{N-n-l}\prod_{x\in\mathcal{S}^3(\boldsymbol{\pi})}a^\varepsilon_{x,\pi_x}\Big|.
\end{aligned}
\end{equation}
We have to estimate the second and third summand. 
For this let us use Lemma \ref{formelpermus} to renumber the states of $\mathcal{S}^1(\boldsymbol{\pi})$ and 
$\mathcal{S}^2(\boldsymbol{\pi})$
such that $\mathcal{S}^1(\boldsymbol{\pi})=\{x_1,\ldots,x_{f_p(\boldsymbol{\pi})}\}$ and 
$\mathcal{S}^2(\boldsymbol{\pi})=\{x_{f_p(\boldsymbol{\pi})+1},\ldots, x_{N-n-p}\}$. 
Then, from (\ref{betas}) it is easy to see that the numbers $\beta^\varepsilon_{l,p,\boldsymbol{\pi}}(\lambda)$ are
polynomials in $\lambda$ of degree $|\mathcal{S}^1(\boldsymbol{\pi})|=f_p(\boldsymbol{\pi})$. Expanding them with respect to 
$\lambda$ yields for $l=0$
\begin{align}
\beta^\varepsilon_{0,p,\boldsymbol{\pi}}(\lambda)
=\prod_{\nu=1}^{f_p(\boldsymbol{\pi})}(\lambda-\tilde{a}^\varepsilon_{x_\nu,x_\nu})\cdot 
\prod_{\nu=f_p(\boldsymbol{\pi})+1}^{N-n-p}(-\tilde{a}^\varepsilon_{x_\nu,x_\nu})
\end{align}
as well as for $1\leq l\leq N-n-p$
\begin{align}
 \beta^\varepsilon_{l,p,\boldsymbol{\pi}}(\lambda)
 =\sum_{1\leq \mu_1<\cdots<\mu_l\leq N-n-p}
 \Bigg(\prod_{\substack{\nu=1\\\nu\notin\left\{\mu_1,\ldots,\mu_l\right\}}}^{f_p(\boldsymbol{\pi})}(\lambda-a^\varepsilon_{x_\nu,x_\nu})\cdot
 \prod_{\substack{\nu=f_p(\boldsymbol{\pi})+1\\\nu\notin\left\{\mu_1,\ldots,\mu_l\right\}}}^{N-n-p}(-\tilde{a}^\varepsilon_{x_\nu,x_\nu})\Bigg).
\end{align}
Then, by Lemma \ref{ordnungeintraege} and Lemma \ref{l:proptransitionprobabilities} 
we can find a constant $\overline{c}=\overline{c}(\alpha,h,\delta,R)$ such that for
$0<\varepsilon<\varepsilon_0$
\begin{equation}
\begin{aligned}
|\beta^\varepsilon_{0,p,\boldsymbol{\pi}}(\lambda)|
=\prod_{\nu=1}^{f_p(\boldsymbol{\pi})}|\lambda-\tilde{a}^\varepsilon_{x_\nu,x_\nu}|\cdot
\prod_{\nu=f_p(\boldsymbol{\pi})+1}^{N-n-p}|\tilde{a}^\varepsilon_{x_\nu,x_\nu}|
&= \prod_{\nu=1}^{f_p(\boldsymbol{\pi})}
\Big|\lambda+\sum_{\substack{\kappa=1\\\kappa\neq \nu}}^{N}\tilde{a}^\varepsilon_{x_\kappa,x_\kappa}\Big|
\cdot \prod_{\nu=f_p(\boldsymbol{\pi})+1}^{N-n-p} |\tilde{a}^\varepsilon_{x_\nu,x_\nu}|\\
 &\leq\Big(\prod_{\nu=1}^{f_p(\boldsymbol{\pi})}(|\lambda|+N\overline{c})\Big)(\overline{c})^{N-n-p-f_p(\boldsymbol{\pi})}\\
 &\leq(|\lambda|+N\overline{c})^{f_p(\boldsymbol{\pi})}(\overline{c})^{N-n-p-f_p(\boldsymbol{\pi})}\\
 &\leq(|\lambda|+N\overline{c})^{N-n-p}
\end{aligned}
\end{equation}
and similarly
\begin{align}
|\beta^\varepsilon_{l,p,\boldsymbol{\pi}}(\lambda)|\leq N!(|\lambda|+N\overline{c})^{N-n-p-l},\quad 1\leq l\leq N-n-p.
\end{align}
Also, note that for $\boldsymbol{\pi}\in\Pi_p $ we have $\vert\mathcal{S}^3(\boldsymbol{\pi})\vert=n+p$. Then
\begin{align}
 \Big|\prod_{x\in\mathcal{S}^3(\boldsymbol{\pi})}a^\varepsilon_{x,\pi_x}(\lambda)\Big| &=
 \prod_{\substack{x\in\mathcal{S}^3(\boldsymbol{\pi})\\\pi_x=x}}|\lambda+\tilde{a}^\varepsilon_{x,x}|
 \prod_{\substack{x\in\mathcal{S}^3(\boldsymbol{\pi})\\\pi_x\neq x}}| a^\varepsilon_{x,\pi_x}|
 \leq (|\lambda|+ N\overline{c})^{n+p}.
\end{align}
We finish the proof by continuing in \eqref{charpoly6} for sufficiently small $\varepsilon$ and $C$ big enough
\begin{equation}
\begin{aligned}
 \Big|(-\varepsilon^\alpha h)^{N-n}P^\varepsilon(\lambda)-P^{\boldsymbol{Q}}(\lambda)\Big|\\
 &\hspace*{-3cm}\leq \tilde{C}(\tilde C+|\lambda|)^{n-1}\varepsilon^\alpha\\
 &\hspace*{-2cm} +(|\lambda|+ N\overline{c})^n\sum_{\boldsymbol{\pi}\in\Pi_0}
 \sum_{l=0}^{N-n-1}N!(|\lambda|+N\overline{c})^{N-n-l}(\varepsilon^{\alpha}h)^{N-n-l}\\
 &\hspace*{-2cm}+ \sum_{p=1}^{N-n}(|\lambda|+ N\overline{c})^{n+p}\sum_{\boldsymbol{\pi}\in\Pi_p}
 \sum_{l=0}^{N-n-p}N!(|\lambda|+N\overline{c})^{N-n-p-l}(\varepsilon^{\alpha}h)^{N-n-l}\\
 &\hspace*{-3cm}\leq \tilde{C}(\tilde C+|\lambda|)^{n-1}\varepsilon^\alpha\\
 &\hspace*{-2cm}+(|\lambda|+ N\overline{c})^n(N!)^2
 \Big[\sum_{l=0}^\infty (|\lambda|+ N\overline{c})^l (\varepsilon^{\alpha}h)^l-1\Big]\\
 &\hspace*{-2cm}+(|\lambda|+ N\overline{c})^n(N-n)(N!)^2
 \Big[\sum_{l=0}^\infty(|\lambda|+ N\overline{c})^l (\varepsilon^{\alpha}h)^l-1\Big]\\
 &\hspace*{-3cm}\leq \tilde{C}(\tilde C+|\lambda|)^{n-1}\varepsilon^\alpha 
 +2 (|\lambda|+ N\overline{c})^{n+1}(N!)^2\varepsilon^\alpha h
 +2(|\lambda|+ N\overline{c})^{n+1}(N-n)(N!)^2\varepsilon^\alpha h\\
 &\hspace*{-3cm}\leq C(C+|\lambda|)^{n+1}\e^\alpha. 
 \end{aligned}
\end{equation}
\hfill$\Box$

\section{Eigenvectors. Proof of Theorem \ref{Theoremeigenvektoren}}

For the investigation of the eigenvectors we will use some facts presented in \cite{Eckhoff00} 
where the author considered the spectrum of the unscaled matrix 
$\mathbf{I}_{N}-\mathbf{P}^\varepsilon$. 
Let  $\tilde{\lambda}_i=- \e^\alpha h\lambda_i^\e$, $1\leq i\leq N$, be the eigenvalues of this matrix. 
Of course, the eigenspaces of $\mathbf{Q}^\varepsilon$ and $\mathbf{I}_{N}-\mathbf{P}^\varepsilon$
coincide and
Theorem \ref{theoremeigenwerte} (i) and Proposition \ref{locationeigenvalues} imply that
\begin{align}\label{lambda}
 \tilde{\lambda}^\varepsilon_1=0,\quad\quad\tilde{\lambda}^{\varepsilon}_i=\mathcal O(\varepsilon^\alpha),\quad 2\leq i\leq n,
 \quad\text{and}\quad \operatorname{Re} \tilde{\lambda}^\varepsilon_i \geq 0,\quad  1\leq i\leq n.
\end{align}
The proof of Theorem \ref{Theoremeigenvektoren} needs some preparations. For a given subset $I\subset\mathcal{S}$ consider 
the first hitting time of $I$,
\begin{align}
 \sigma^\varepsilon_I\coloneqq \inf\{k\geq 0\colon Z^\varepsilon_k\in I\},
\end{align}
and, for convenience,  the first return time to $I$,
\begin{align}
 \tau^\varepsilon_I\coloneqq \inf \{k\geq 1\colon Z^\varepsilon_k\in I\}.
\end{align}
For $u\in\mathbb{C}$, $x\in\mathcal{S}$ and $I,J\subset\mathcal{S}$ consider the conditional Laplace transforms
\begin{align}
\label{laplaceG}
G_{I,J}^{x,\varepsilon}(u)\coloneqq 
\mathbb{E}_x\Big[\ex^{u\tau^\varepsilon_I}\mathbf{\mathop{1}}_{\{\tau^\varepsilon_I\leq \tau^\varepsilon_J\}}\Big]
\end{align}
and
\begin{align}
\label{laplaceK}
K_{I,J}^{x,\varepsilon}(u)\coloneqq 
\mathbb{E}_x\Big[\ex^{u\sigma^\varepsilon_I}\mathbf{\mathop{1}}_{\{\sigma^\varepsilon_I\leq \sigma^\varepsilon_J\}}\Big].
\end{align}
These expressions are finite for every $u\in\mathbb{C}$ such that $\operatorname{Re} u\leq 0$ but they can be infinite if the 
real part of $u$ is positive. We will discuss the
finiteness of these Laplace transforms for certain special cases later.

Since for any initial state $x\in\mathcal{S}\setminus I$ we have the equality $\sigma^\varepsilon_I=\tau^\varepsilon_I$, it follows that
\begin{align}\label{ZusammenhangGundK}
 K_{I,J}^{x,\varepsilon}(u)=\begin{cases}
                      G_{I,J}^{x,\varepsilon}(u) ,& x\notin I\cup J,\\
                      1                   ,& x\in I,\\
                      0		           ,& x\in J\setminus I.
                     \end{cases}
\end{align}
\noindent
In the following lemma we will prove another useful relation between these two functions (see Section 2 in \cite{Eckhoff00}). 
\begin{lem}\label{GleichungGK}
Fix an arbitrary $x\in\mathcal{S}$, subsets $I,J\subset\mathcal{S}$ and $u\in\mathbb{C}$ such that $G_{I,J}^{x,\varepsilon}(u)$ is finite. Then
\begin{align}
 \ex^u\sum_{y\in\mathcal{S}} p^\varepsilon_{x,y}K_{I,J}^{y,\varepsilon}(u)=G_{I,J}^{x,\varepsilon}(u).
\end{align}
\end{lem}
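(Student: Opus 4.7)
The plan is to perform a one-step analysis on $\tau^\varepsilon_I$ by conditioning on $Z^\varepsilon_1$ and then invoking the Markov property of $Z^\varepsilon$. The key observation is that $\tau^\varepsilon_I \geq 1$ always (it is a return time), so
\begin{equation*}
G_{I,J}^{x,\varepsilon}(u) = \sum_{y\in\mathcal{S}} p^\varepsilon_{x,y}\, \mathbb{E}_x\Big[\ex^{u\tau^\varepsilon_I}\mathbf{1}_{\{\tau^\varepsilon_I\leq \tau^\varepsilon_J\}} \,\Big|\, Z^\varepsilon_1 = y\Big],
\end{equation*}
and the conditional expectations should be matched with $\ex^u K_{I,J}^{y,\varepsilon}(u)$ in each of the three regimes determined by the position of $y$ relative to $I$ and $J$.

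I would then split the sum according to the characterization \eqref{ZusammenhangGundK}. If $y\in I$, the process has hit $I$ at time $1$, so $\tau^\varepsilon_I = 1$ and the event $\{\tau^\varepsilon_I \leq \tau^\varepsilon_J\}$ holds trivially (since $\tau^\varepsilon_J\geq 1$); the conditional expectation equals $\ex^u$, which matches $\ex^u K_{I,J}^{y,\varepsilon}(u)$. If $y\in J\setminus I$, then $\tau^\varepsilon_J = 1$ while $\tau^\varepsilon_I\geq 2$, so the indicator vanishes and the contribution is $0 = \ex^u K_{I,J}^{y,\varepsilon}(u)$. If $y\notin I\cup J$, then $\tau^\varepsilon_I = 1 + \sigma^\varepsilon_I\circ\theta_1$ and $\tau^\varepsilon_J = 1 + \sigma^\varepsilon_J\circ\theta_1$, where $\theta_1$ is the shift by one step; the Markov property at time $1$ yields
\begin{equation*}
\mathbb{E}_x\Big[\ex^{u\tau^\varepsilon_I}\mathbf{1}_{\{\tau^\varepsilon_I\leq \tau^\varepsilon_J\}} \,\Big|\, Z^\varepsilon_1 = y\Big] = \ex^u\,\mathbb{E}_y\Big[\ex^{u\sigma^\varepsilon_I}\mathbf{1}_{\{\sigma^\varepsilon_I\leq \sigma^\varepsilon_J\}}\Big] = \ex^u K_{I,J}^{y,\varepsilon}(u).
\end{equation*}
Assembling the three cases and factoring out $\ex^u$ gives precisely the claimed identity.

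There is no real obstacle here; the only subtle point is to be careful that $\tau^\varepsilon_I$ is a \emph{return} time (so $\geq 1$), which is why the factor $\ex^u$ appears outside the sum and why the distinction between $\sigma^\varepsilon$ and $\tau^\varepsilon$ in \eqref{ZusammenhangGundK} produces the correct boundary values at $y\in I$ and $y\in J\setminus I$. The finiteness of $G_{I,J}^{x,\varepsilon}(u)$ assumed in the hypothesis ensures all manipulations are legitimate, and since the expectations are over an at-most-countable set of trajectories weighted by the Markov transition probabilities, conditioning on $Z^\varepsilon_1$ causes no measure-theoretic difficulty.
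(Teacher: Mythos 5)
Your proof is correct and uses essentially the same approach as the paper: conditioning on $Z^\varepsilon_1$, using that $\tau^\varepsilon_I\geq 1$, and applying the Markov property at time $1$. Your treatment is slightly more explicit than the paper's one-line computation in that you spell out the three cases of \eqref{ZusammenhangGundK}, but the argument is the same.
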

\begin{proof}
An application of the strong Markov property yields
\begin{equation}
\begin{aligned}
 G_{I,J}^{x,\varepsilon}(u)=\mathbb{E}_x\left[\ex^{u\tau^\varepsilon_I}\mathbf{\mathop{1}}_{\left\{\tau^\varepsilon_I\leq \tau^\varepsilon_J\right\}}\right]&=
 \sum_{y\in\mathcal{S}}\mathbb{E}_x\left[\ex^{u(1+\sigma^\varepsilon_I)}\mathbf{\mathop{1}}_{\left\{1+\sigma^\varepsilon_I\leq 1+ \sigma^\varepsilon_J\right\}}\vert Z^\varepsilon_1=y\right]\mathbb{P}_x(Z^\varepsilon_1=y)\\
 &=\ex^u\sum_{y\in\mathcal{S}} p^\varepsilon_{x,y}K_{I,J}^{y,\varepsilon}(u).
\end{aligned}
\end{equation}
\end{proof}

Now let us introduce the following quantities. Define $T^{\max}\in\mathbb N$ to be the maximal number of steps
the deterministic motion $Z^0$ needs to reach $\mathcal{M}$ when starting somewhere
in $\mathcal{S}$, 
\begin{align}
 T^{\max}=\max_{x\in\mathcal{S}}\min \{k\geq 1\colon Z^0_k(x)\in\mathcal{M} \}.
\end{align}
Recall the definitions of the interval boundaries $a_y$ and $b_y$ of $I_y$ and of the state $y^*(x)$ for a given $x\in\mathcal{S}$ and 
define
\begin{align}
\label{definitionD}
 D\coloneqq D(h,\delta)\coloneqq \min_{x\in\mathcal{S}} \operatorname{dist}\Big(x -h U'(x), [a_{y^*(x)}, b_{y^*(x)}]\Big).
\end{align}
This quantity gives the minimal distance which the deterministic motion has to the interval boundaries after one time step and is crucial when it comes to
comparing the trajectories of $Z^0$ and $Z^\varepsilon$. 
In particular, \eqref{estmatrix2} implies that
\begin{align}
\min_{x\in\mathcal S}\mathbb P_x (Z^0_1\neq  Z^\varepsilon_1)=1- \mathbb P_x (Z^0_1=  Z^\varepsilon_1)
=1-\mathbb P_x (Z^\varepsilon_1=y^*(x))
=p^\e_{x,y^*(x)}\geq \e^\alpha h D^{-\alpha}.
\end{align}
We denote $p^\varepsilon\coloneqq  \e^\alpha h D^{-\alpha}$, $p^\e=\mathcal O(\varepsilon^\alpha)$ as $\e\to 0$.

We are interested in the radius of convergence of the Laplace transforms $G_{I,J}^{x,\varepsilon}$ and $K_{I,J}^{x,\varepsilon}$ 
in dependence on $\varepsilon$ and for the case $I,J\subset\mathcal{M}$.
For this let
\begin{align}
 u^\varepsilon_\mathcal{M}\coloneqq 
 \sup\Big\{u\geq 0\colon G^{x,\varepsilon}_{\mathcal{M},\mathcal{M}}(u)=\mathbb{E}_x \ex^{u\tau^\varepsilon_\mathcal{M}}<\infty\quad 
 \text{for all }x\in\mathcal{S}\Big\}.
\end{align}
Obviously, for $u\geq 0 $ and $I\subset\mathcal{M}$ one always has $G_{I,\mathcal{M}}^{x,\varepsilon}(u)\leq G^{x,\varepsilon}_{\mathcal{M},\mathcal{M}}(u)$ so we can immediately conclude $G_{I,\mathcal{M}}^{x,\varepsilon}(u)<\infty$
for every $u\in\mathbb{C}$ with $0\leq \operatorname{Re} u< u^\varepsilon_\mathcal{M}$.

\begin{prp}\label{konvbereich}
 We have 
 \begin{align}
  \lim_{\varepsilon\rightarrow 0}u^\varepsilon_\mathcal{M}=\infty.
 \end{align}
\end{prp}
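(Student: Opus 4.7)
The plan is to show that the return time $\tau^\varepsilon_{\mathcal{M}}$ has geometrically decaying tails whose rate blows up polynomially in $1/\varepsilon$, which will force the radius of convergence of the Laplace transform to diverge. The heuristic is that the perturbed chain $Z^\varepsilon$ tracks the deterministic flow $Z^0$ with probability $1-\mathcal{O}(\varepsilon^\alpha)$ per step, and $Z^0$ reaches $\mathcal{M}$ in at most $T^{\max}$ steps from any state, so the return to $\mathcal{M}$ cannot take long without repeated stochastic deviations.

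First I would make the one-step coupling estimate precise. By Lemma \ref{l:proptransitionprobabilities}(ii) together with the definition \eqref{definitionD} of $D$, there is a constant $c > 0$ and $\varepsilon_0 > 0$ such that $1 - p^\varepsilon_{x,y^*(x)} \leq c\,\varepsilon^\alpha$ uniformly in $x \in \mathcal{S}$ for all $\varepsilon \in (0,\varepsilon_0)$. Setting $T \coloneqq T^{\max}+1$ and applying the Markov property, the probability that $Z^\varepsilon_k = Z^0_k$ for every $1 \leq k \leq T$ is bounded below by $(1 - c\varepsilon^\alpha)^T$. On this event, because $Z^0_k \in \mathcal{M}$ for some $k \in \{1,\dots,T\}$ starting from any $x \in \mathcal{S}$ (including $x \in \mathcal{M}$, where $Z^0_1 = x$), one has $\tau^\varepsilon_{\mathcal{M}} \leq T$. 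Hence
\begin{equation*}
\max_{x \in \mathcal{S}} \mathbb{P}_x\bigl(\tau^\varepsilon_{\mathcal{M}} > T\bigr) \leq 1 - (1 - c\varepsilon^\alpha)^T \leq Tc\,\varepsilon^\alpha \eqqcolon q^\varepsilon.
\end{equation*}

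Next I would iterate this estimate by the strong Markov property. Conditioning on the state $Z^\varepsilon_{kT} \notin \mathcal{M}$ on the event $\{\tau^\varepsilon_{\mathcal{M}} > kT\}$ and applying the one-step bound to the fresh window of length $T$ yields
\begin{equation*}
\mathbb{P}_x\bigl(\tau^\varepsilon_{\mathcal{M}} > (k+1)T\bigr) \leq q^\varepsilon \cdot \mathbb{P}_x\bigl(\tau^\varepsilon_{\mathcal{M}} > kT\bigr),
\end{equation*}
so by induction $\mathbb{P}_x(\tau^\varepsilon_{\mathcal{M}} > kT) \leq (q^\varepsilon)^k$ uniformly in $x$ and $k \geq 0$.

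Finally I would extract $u^\varepsilon_{\mathcal{M}}$ by computing
\begin{equation*}
\mathbb{E}_x\bigl[\ex^{u\tau^\varepsilon_{\mathcal{M}}}\bigr] = \sum_{k=0}^\infty \mathbb{E}_x\bigl[\ex^{u\tau^\varepsilon_{\mathcal{M}}}\mathbf{1}_{\{kT < \tau^\varepsilon_{\mathcal{M}} \leq (k+1)T\}}\bigr] \leq \ex^{uT}\sum_{k=0}^\infty \bigl(\ex^{uT}q^\varepsilon\bigr)^k,
\end{equation*}
which is finite whenever $\ex^{uT} q^\varepsilon < 1$, i.e.\ $u < -\frac{1}{T}\log q^\varepsilon = \frac{\alpha \log(1/\varepsilon) + \mathcal{O}(1)}{T}$. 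This lower bound on $u^\varepsilon_{\mathcal{M}}$ tends to $+\infty$ as $\varepsilon \to 0$, proving the proposition. The proof is essentially bookkeeping around the coupling; the only subtlety is handling starting points $x \in \mathcal{M}$ (where one needs $\tau^\varepsilon_{\mathcal{M}} \geq 1$), which is why the window size must be $T^{\max}+1$ rather than $T^{\max}$, and checking that the reflection conditions at $\pm R$ do not spoil the one-step estimate—both of which are immediate from the finiteness of $\mathcal{S}$ and from Lemma \ref{l:proptransitionprobabilities}.
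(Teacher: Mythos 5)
Your proof is correct, and it rests on the same two pillars as the paper's: the uniform one-step fidelity $1-p^\varepsilon_{x,y^*(x)}=\mathcal O(\varepsilon^\alpha)$ coming from Lemma \ref{l:proptransitionprobabilities}(ii) and the constant $D$ of \eqref{definitionD}, and the fact that the deterministic motion reaches $\mathcal M$ from any state within $T^{\max}$ steps. Where you differ is in how the tail of $\tau^\varepsilon_{\mathcal M}$ is extracted from these facts. The paper argues pathwise: for $\mu T^{\max}+1\leq k\leq(\mu+1)T^{\max}$ the event $\{\tau^\varepsilon_{\mathcal M}=k\}$ forces at least $\mu$ deviations from $Z^0$, and the probability of that is bounded by a sum of binomial coefficients times $(p^\varepsilon)^l$, yielding $\mathbb P_x(\tau^\varepsilon_{\mathcal M}=k)\leq C_2\mu(Kp^\varepsilon)^\mu$. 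You instead prove the uniform window bound $\max_x\mathbb P_x(\tau^\varepsilon_{\mathcal M}>T)\leq q^\varepsilon=\mathcal O(\varepsilon^\alpha)$ and iterate it with the Markov property at the deterministic times $kT$ (which is legitimate, since on $\{\tau^\varepsilon_{\mathcal M}>kT\}$ the state $Z^\varepsilon_{kT}$ lies outside $\mathcal M$ and the window bound is uniform over starting points), obtaining the clean geometric tail $(q^\varepsilon)^k$ without any combinatorial bookkeeping. Both routes give a convergence radius of order $\tfrac{\alpha}{T^{\max}}\log(1/\varepsilon)$, so the conclusions are identical; your version is arguably tidier and also sidesteps the slightly garbled display in the paper where $p^\varepsilon$ is introduced with the inequality pointing the wrong way. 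The boundary/reflection issue you flag is indeed harmless, since \eqref{deftransprobs2}--\eqref{deftransprobs3} only enlarge the target sets and hence preserve the lower bound on $p^\varepsilon_{x,y^*(x)}$.
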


\begin{proof} 
Consider $k\geq 1$ such that  $T^{\max}+1\leq k\leq 2T^{\max}$. 
The occurrence of the event $\{\tau^\varepsilon_\mathcal{M}=k\}$ implies that the Markov chain $Z^\e$ 
did not follow the deterministic motion $Z^0$ at at least one step on the time interval $[0,k]$, namely  
\begin{align}
 \{\tau^\varepsilon_\mathcal{M}=k\}
 \subseteq \{Z^\e_l\neq Z^0_l \text{ for at least one }1\leq l\leq k\}.
\end{align}
Similarly, for $2T^{\max}+1\leq k\leq 3T^{\max}$ the occurrence of 
$\{\tau^\varepsilon_\mathcal{M}=k\}$ implies that such a deviation happened at least twice and so forth.
Hence, for $\mu\in\mathbb{N}$ and $\mu T^{\max}+1\leq k\leq (\mu+1)T^{\max}$ one has
\begin{align}
\label{bigjump}
\{\tau^\varepsilon_\mathcal{M}=k\}\subseteq \{Z^\e_l\neq Z^0_l \text{ for at least }\mu\text{ time instants } 
\{l_1,\ldots,l_\mu\}\subset \{1,\ldots,k \} \}.
\end{align}
Therefore for all
$x\in\mathcal{S}$, $\mu\in\mathbb{N}$ and $\mu T^{\max}+1\leq k\leq (\mu+1)T^{\max}$ we find with the help of the Markov property that
\begin{equation}
\begin{aligned}
\mathbb{P}_x(\tau^\varepsilon_\mathcal{M}=k)
&\leq \mathbb{P}_x \Big(Z^\e_l\neq Z^0_l \text{ for at least }\mu\text{ time instants } 
\{l_1,\ldots,l_\mu\}\subset \{1,\ldots,k \} \Big)\\
&\leq \mathbb{P}_x \Big (Z^\e_l\neq Z^0_l \text{ for at least }\mu\text{ time instants } 
\{l_1,\ldots,l_\mu \}\subset \{1,\ldots,(\mu +1)  T^{\max} \}  \Big)\\
&\leq
\sum_{l=\mu}^{(\mu+1)T^{\max}}   \binom{(\mu+1)T^{\max}}{l}  (p^\varepsilon)^l.    
\end{aligned}
\end{equation}
We apply the well known estimate for the binomial coefficient $\binom{n}{k}\leq (\frac{n \ex}{k})^k$, $n\geq 1$, $1\leq k\leq n$,
and the bound $\mu\leq l\leq (\mu+1)T^{\max}$ to get the estimate
\begin{align}
 \binom{(\mu+1)T^{\max}}{l}\leq C_1 K^\mu.
\end{align}
for some constants $C_1>0$ and $K>1$.
Then for some constant $C_2>0$:
\begin{align}
 \mathbb{P}_x(\tau^\varepsilon_\mathcal{M}=k)
 \leq C_1 \Big((\mu+1)T^{\max}-\mu+1\Big) K^{\mu} (p^\varepsilon)^\mu
 \leq C_2 \mu (K p^\varepsilon)^\mu.
\end{align}
For any $u\geq 0$ we set $C_3=\sum_{k=1}^{T^{\max}}\ex^{u k}\mathbb{P}_x(\tau^\varepsilon_\mathcal{M}=k)$ and obtain
\begin{equation}
\begin{aligned}
 \mathbb{E}_x \ex^{u\tau^\varepsilon_\mathcal{M}}& =\sum_{k=1}^\infty \ex^{u k}
 \mathbb{P}_x(\tau^\varepsilon_\mathcal{M}=k)\\
& =C_3+\sum_{\mu=1}^\infty\quad\sum_{k=\mu T^{\max}+1}^{(\mu+1)T^{\max}}\ex^{u k}\mathbb{P}_x(\tau^\varepsilon_\mathcal{M}=k)\\
&\leq C_3+C_4 \sum_{\mu=1}^\infty \ex^{u (\mu+1)T^{\max}} \mu (K p^\varepsilon)^\mu.
\end{aligned}
\end{equation}
This series converges if 
$\ex^{u T^{\max}} K p^{\varepsilon}<1$ or, equivalently, if 
$ u<\frac{1}{T^{\max}}\ln (\frac{1}{p^\varepsilon})-\ln(K)$.
The claim
of the proposition follows from the fact that $p^\varepsilon$ is of order $\mathcal O(\varepsilon^{\alpha})$.
\end{proof}

From this proposition it follows that for a fixed $u_0>0$ one can always choose $\varepsilon$ 
small enough such that $u_0\leq u^\varepsilon_\mathcal{M}$ and therefore we can immediately
deduce the following corollary
that will provide us with an approximation of $G_{I,J}^{x,\varepsilon}(u)$ with respect to $u$ in a neighbourhood of $0$.

\medskip
\noindent
\begin{cor}\label{corolaplacetaylor}
Fix $u_0>0$ and $\varepsilon_0>0$ such that $u_0\leq u^\varepsilon_\mathcal{M}$ and $0<\varepsilon<\varepsilon_0$. 
Let $x\in\mathcal{S}$ and $I\subseteq\mathcal{M}$.
Then there exists a constant $L>0$,
which can be chosen independently of $\varepsilon$ and $x$, 
such that for every $u\leq u_0$
\begin{align}
\left\vert G_{I,\mathcal{M}}^{x,\varepsilon}(u)-G_{I,\mathcal{M}}^{x,\varepsilon}(0)\right\vert\leq L\cdot \vert u\vert. 
\end{align}
\end{cor}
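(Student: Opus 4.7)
The plan is to represent the increment of the Laplace transform as a single expectation, peel off one factor of $|u|$ by an elementary pointwise inequality, and then bound the remaining exponential moment uniformly in $\varepsilon$ and $x$ by reusing the combinatorial estimate from the proof of Proposition \ref{konvbereich}.

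First, from the definition \eqref{laplaceG},
\begin{equation*}
G^{x,\varepsilon}_{I,\mathcal M}(u)-G^{x,\varepsilon}_{I,\mathcal M}(0) = \mathbb E_x\Bigl[(\ex^{u\tau^\varepsilon_I}-1)\mathbf{1}_{\{\tau^\varepsilon_I\le\tau^\varepsilon_\mathcal M\}}\Bigr].
\end{equation*}
Since $I\subseteq\mathcal M$, any visit to $I$ is simultaneously a visit to $\mathcal M$, so one always has $\tau^\varepsilon_I\ge\tau^\varepsilon_\mathcal M$; hence on the indicator event $\tau^\varepsilon_I=\tau^\varepsilon_\mathcal M$. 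The elementary pointwise inequality $|\ex^{uy}-1|\le|u|\,y\,\ex^{u_0 y}$, valid for all real $u\le u_0$ and $y\ge 0$ (from $e^x-1\le x e^x$ on $[0,\infty)$ and $1-e^{-x}\le x$ on $[0,\infty)$), then gives
\begin{equation*}
\bigl|G^{x,\varepsilon}_{I,\mathcal M}(u)-G^{x,\varepsilon}_{I,\mathcal M}(0)\bigr|\le |u|\,\mathbb E_x\bigl[\tau^\varepsilon_\mathcal M\,\ex^{u_0\tau^\varepsilon_\mathcal M}\bigr].
\end{equation*}

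Second, I would trade the polynomial factor for a slightly larger exponential rate: setting $u_1\coloneqq u_0+1$ and using $y\,\ex^{u_0 y}\le e^{-1}\,\ex^{u_1 y}$ for $y\ge 0$ (from $y e^{-y}\le 1/e$) bounds the right-hand side by $\tfrac{|u|}{e}\,\mathbb E_x\ex^{u_1\tau^\varepsilon_\mathcal M}$. By Proposition \ref{konvbereich}, after possibly shrinking $\varepsilon_0$ one has $u_1\le u^\varepsilon_\mathcal M$ for all $\varepsilon<\varepsilon_0$, so this moment is finite. The key observation is that the combinatorial estimate
\begin{equation*}
\mathbb P_x(\tau^\varepsilon_\mathcal M=k)\le C_2\,\mu\,(Kp^\varepsilon)^\mu,\qquad \mu T^{\max}+1\le k\le(\mu+1)T^{\max},
\end{equation*}
derived in the proof of that proposition is uniform over $x\in\mathcal S$, because it rests only on the strong Markov property and on the worst-case one-step probability $p^\varepsilon$ of $Z^\varepsilon$ deviating from $Z^0$, neither of which depends on the initial state. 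Summing exactly as in that proof gives
\begin{equation*}
\mathbb E_x\ex^{u_1\tau^\varepsilon_\mathcal M}\le T^{\max}\ex^{u_1 T^{\max}}+C_4\sum_{\mu\ge 1}\mu\,\bigl(\ex^{u_1 T^{\max}}Kp^\varepsilon\bigr)^\mu,
\end{equation*}
and once $\varepsilon$ is small enough that $\ex^{u_1 T^{\max}}Kp^\varepsilon\le 1/2$ (possible since $p^\varepsilon=\mathcal O(\varepsilon^\alpha)$), this geometric sum is bounded by a constant independent of both $\varepsilon$ and $x$.

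Combining the three bounds yields the Lipschitz estimate with a constant $L$ depending only on $u_0$, $T^{\max}$, $K$, $C_4$, and hence ultimately only on $h,\delta,R,\alpha$. The main obstacle is purely bookkeeping: ensuring that the exponential-moment estimate drawn from the proof of Proposition \ref{konvbereich} is genuinely $x$-uniform and that $\varepsilon_0$ can be chosen so that $u_1=u_0+1$ still lies below $u^\varepsilon_\mathcal M$; once these are checked, the rest of the argument is routine.
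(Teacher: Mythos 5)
Your proof is correct and reaches the stated Lipschitz bound by a somewhat more direct route than the paper, although both arguments ultimately rest on the same core input: the uniform-in-$x$ tail estimate $\mathbb{P}_x(\tau^\varepsilon_{\mathcal M}=k)\le C_2\,\mu\,(Kp^\varepsilon)^\mu$ established in the proof of Proposition \ref{konvbereich}, which, as you correctly observe, depends only on the worst-case one-step deviation probability $p^\varepsilon$ and not on the starting point. Where you differ is in how the increment is turned into a factor of $|u|$: the paper Taylor-expands $G^{x,\varepsilon}_{I,\mathcal M}$ at $u=0$, bounds each derivative by $\sum_k k^n\,\mathbb{P}_x(k\le\tau^\varepsilon_{\mathcal M})$, and re-sums the series into $C_4\, r^\varepsilon(\ex^{|u|}-1)/\big((1-r^\varepsilon)(1-r^\varepsilon\ex^{|u|})\big)\le L|u|$; you instead apply the single pointwise inequality $|\ex^{uy}-1|\le|u|\,y\,\ex^{u_0y}$ inside the expectation (using that $\tau^\varepsilon_I=\tau^\varepsilon_{\mathcal M}$ on the indicator event, which is exactly the right observation for $I\subseteq\mathcal M$; note also that $I=\emptyset$ is trivial) and then absorb the polynomial factor into the marginally larger exponential rate $u_0+1$. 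This buys a shorter argument with no interchange of infinite sums, at the mild cost of having to shrink $\varepsilon_0$ so that $u_0+1$, rather than $u_0$, lies in the convergence region of the Laplace transform --- but the paper's own proof likewise shrinks $\varepsilon$ at the end (to ensure $r^\varepsilon\ex^{u_0}<1/2$), so nothing is lost. Both proofs yield a constant $L=L(u_0,h,\delta,R,\alpha)$ independent of $\varepsilon$ and $x$, as required.
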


\begin{proof} The choice of $\varepsilon$ guarantees that $G_{I,\mathcal{M}}^{x,\varepsilon}(u)$ is finite for $u\leq u_0$. 
We then  use the Taylor expansion and obtain
\begin{align}
| G_{I,\mathcal{M}}^{x,\varepsilon}(u)-G_{I,\mathcal{M}}^{x,\varepsilon}(0)|
\leq\sum_{n=1}^\infty \frac{1}{n!}\Big|\frac{\di^n G_{I,\mathcal{M}}^{x,\varepsilon}(0)}{\di u^n}\Big|\cdot |u|^n.
\end{align}
We have to estimate the expressions $\frac{\di^n G_{I,\mathcal{M}}^{x,\varepsilon}(u)}{\di u^n}\Big|_{u=0}$ 
in the limit of small $\varepsilon$. An easy calculation yields
\begin{equation}
\begin{aligned}
\Big |\frac{\di^n G_{I,\mathcal{M}}^{x,\varepsilon}(0)}{\di u^n}\Big|
&=\sum_{k=1}^\infty \Big|\frac{\di^n}{\di u^n}\ex^{u k}\Big|_{u=0}\cdot \mathbb{P}_x(\tau^\varepsilon_I=k\leq\tau^\varepsilon_\mathcal{M}) \\
&=\sum_{k=1}^\infty k^n \mathbb{P}_x(\tau^\varepsilon_I=k\leq\tau^\varepsilon_\mathcal{M})\\
&\leq \sum_{k=1}^\infty k^n \mathbb{P}_x(k\leq\tau^\varepsilon_\mathcal{M})\\
 &=\sum_{k=1}^\infty k^n\sum_{l=k}^\infty\mathbb{P}_x(\tau^\varepsilon_\mathcal{M}=l).
\end{aligned}
\end{equation}
We proceed with similar arguments and the same constants $C_2$ and $K$ as in the proof of Proposition \ref{konvbereich}.
For a given $k\geq 1$ let $\mu_k\in\mathbb{N}$ be defined by the relation
$\mu_k T^{\max}+1\leq k\leq (\mu_k+1)T^{\max}$. Then,
\begin{equation}
\label{abschaetzungtaylor}
\begin{aligned}
\sum_{k=1}^\infty k^n\sum_{l=k}^\infty\mathbb{P}_x(\tau^\varepsilon_\mathcal{M}=l) 
& \leq \sum_{k=1}^\infty k^n\sum_{\mu=\mu_k}^\infty\sum_{l=\mu T^{\max}+1}^{(\mu+1) T^{\max}}\mathbb{P}_x(\tau^\varepsilon_\mathcal{M}=l)\\
& \leq \sum_{k=1}^\infty k^n\sum_{\mu=\mu_k}^\infty C_2\mu (K p^\varepsilon)^\mu\\
&\leq C_3 \sum_{k=1}^\infty k^n\sum_{\mu=\mu_k}^\infty (2K p^\varepsilon)^\mu\\
&=  \frac{C_3}{1-2K p^\varepsilon } \sum_{k=1}^\infty k^n  (2K p^\varepsilon)^{\mu_k} .
\end{aligned}
\end{equation}
We note that $\mu_k\leq \frac{k-1}{T^{\max}}$, abbreviate
$r^\varepsilon\coloneqq (2K p^\varepsilon)^{\frac{1}{T^{\max}}}$,
choose $\e$ small enough such that $r_\e\ex^{u_0}<1/2$ and find yet another constant $C_4>0$
such that
\begin{equation}
\begin{aligned}
 | G_{I,\mathcal{M}}^{x,\varepsilon}(u)-G_{I,\mathcal{M}}^{x,\varepsilon}(0)|
 &\leq C_4 \sum_{n=1}^\infty \frac{\vert u\vert^n}{n!}\sum_{k=0}^\infty k^n (r^\varepsilon)^k
 =C_4\sum_{k=0}^\infty  (r^\varepsilon)^k(\ex^{k\vert u\vert}-1)\\
 &=C_4\frac{r^\varepsilon(\ex^{\vert u\vert}-1)}{(1-r^\varepsilon)(1-r^\varepsilon\ex^{\vert u\vert})}\leq L \vert u\vert
\end{aligned}
\end{equation}
with a certain constant $L>0$.
\end{proof}

\begin{lem}
\label{l:eigenspace}
Let $\mathbf{A}$ be a $n\times n$ matrix such that $0\in\sigma(\mathbf{A})$ and 
eigenspace corresponding to the eigenvalue $0$ be one-dimensional. Let $\boldsymbol{\psi}$ be 
an eigenvector normalized such that 
$\|\boldsymbol{\psi}\|_\infty:=\max_{1\leq j\leq n}|\psi_j|=1$. Furthermore, let $\boldsymbol\rho(\e)$, $\e>0$, be a sequence of 
vectors such that $\lim_{\e\to 0}\|\boldsymbol\rho(\e)\|_\infty=0$ and assume that for every $\e>0$ the equation 
$\mathbf{A}\mathbf x=\boldsymbol{\rho}(\e)$ possesses a solution $\mathbf x=\boldsymbol{\psi}(\e)$ such that
$\|\boldsymbol{\psi}(\e)\|_\infty=1$. Then $\lim_{\e\to 0}\|\boldsymbol{\psi}(\e)-\boldsymbol{\psi}\|_\infty=0$.
\end{lem}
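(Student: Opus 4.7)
The plan is to exploit compactness of the unit sphere in the finite-dimensional space $(\mathbb{C}^n,\|\cdot\|_\infty)$ together with the hypothesis that $\ker \mathbf{A}$ is one-dimensional. The argument is essentially an ``$\varepsilon$--$\delta$ via subsequences'' scheme: every subsequential limit of $\{\boldsymbol{\psi}(\varepsilon)\}_{\varepsilon>0}$ must lie in $\ker \mathbf{A}$, which pins it down to $\pm\boldsymbol{\psi}$.

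First I would record the boundedness and precompactness. Since $\|\boldsymbol{\psi}(\varepsilon)\|_\infty=1$ for every $\varepsilon>0$, the family $\{\boldsymbol{\psi}(\varepsilon)\}_{\varepsilon>0}$ sits inside a compact subset of $\mathbb{C}^n$. Consequently, for any sequence $\varepsilon_k\to 0$ one can extract a subsequence (still denoted $\{\varepsilon_k\}$) along which $\boldsymbol{\psi}(\varepsilon_k)\to\boldsymbol{\psi}^\ast$ for some $\boldsymbol{\psi}^\ast\in\mathbb{C}^n$ with $\|\boldsymbol{\psi}^\ast\|_\infty=1$.

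Second, I would identify $\boldsymbol{\psi}^\ast$ with a scalar multiple of $\boldsymbol{\psi}$. Passing to the limit in the identity $\mathbf{A}\boldsymbol{\psi}(\varepsilon_k)=\boldsymbol{\rho}(\varepsilon_k)$ and using continuity of $\mathbf{A}$ together with $\|\boldsymbol{\rho}(\varepsilon_k)\|_\infty\to 0$, we obtain $\mathbf{A}\boldsymbol{\psi}^\ast=\mathbf{0}$, so $\boldsymbol{\psi}^\ast\in\ker\mathbf{A}$. By the one-dimensionality hypothesis, $\boldsymbol{\psi}^\ast=c\,\boldsymbol{\psi}$ for some scalar $c$, and the common normalization $\|\boldsymbol{\psi}^\ast\|_\infty=\|\boldsymbol{\psi}\|_\infty=1$ forces $|c|=1$. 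In the real case this means $c=\pm 1$.

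Finally I would conclude by a standard subsequence argument: since every sequence $\varepsilon_k\to 0$ admits a sub-subsequence along which $\boldsymbol{\psi}(\varepsilon_k)$ converges to some element of $\{c\boldsymbol{\psi} : |c|=1\}$, and since any two such limits differ only by a unit scalar, after absorbing this scalar ambiguity into the choice of $\boldsymbol{\psi}(\varepsilon)$ (which is itself only determined up to a scalar of modulus one by the normalization $\|\boldsymbol{\psi}(\varepsilon)\|_\infty=1$), we conclude $\|\boldsymbol{\psi}(\varepsilon)-\boldsymbol{\psi}\|_\infty\to 0$. The only real obstacle is the inherent sign/phase ambiguity of the normalization, which is handled exactly as in the companion Theorem \ref{Theoremeigenvektoren}: the conclusion is understood modulo this ambiguity, since $\boldsymbol{\psi}$ and $\boldsymbol{\psi}(\varepsilon)$ are eigenvectors only up to a scalar factor.
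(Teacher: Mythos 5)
Your compactness/subsequence argument is correct but takes a genuinely different route from the paper's. The paper decomposes $\mathbb{R}^n = E_1 \oplus E_2$ with $E_1 = \operatorname{ker}(\mathbf{A})$ and $E_2 = \operatorname{im}(\mathbf{A})$, observes that $\mathbf{A}|_{E_2}$ is a bijection onto $E_2$, splits $\boldsymbol{\psi}(\e) = t(\e) + u(\e)$ accordingly, shows $u(\e) = (\mathbf{A}|_{E_2})^{-1}\boldsymbol{\rho}(\e) \to 0$, and then lets the normalization push $t(\e)$ toward $\pm\boldsymbol{\psi}$. Your route buys something concrete: the direct-sum $\mathbb{R}^n = \operatorname{ker}(\mathbf{A}) \oplus \operatorname{im}(\mathbf{A})$ is \emph{not} an automatic consequence of the stated hypothesis that the null eigenspace is one-dimensional; it additionally requires $0$ to be a semisimple eigenvalue (no nontrivial Jordan block, i.e.\ $\operatorname{ker}(\mathbf{A}) \cap \operatorname{im}(\mathbf{A}) = \{0\}$). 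The paper's proof uses this silently --- harmless in the application, where all eigenvalues of $\mathbf{Q}$ are assumed simple --- but your compactness argument does not need it at all: boundedness gives subsequential limits, passing to the limit in $\mathbf{A}\boldsymbol{\psi}(\e_k)=\boldsymbol{\rho}(\e_k)$ places any such limit in $\operatorname{ker}(\mathbf{A})$, and one-dimensionality plus the normalization pins it down up to a unit scalar. Both proofs share the same unavoidable sign/phase ambiguity, which you are right to flag explicitly and which the paper's proof passes over: $\|\boldsymbol{\psi}\|_\infty = 1$ fixes $\boldsymbol{\psi}$ only up to a scalar of modulus one, and $\boldsymbol{\psi}(\e)$ may converge to $-\boldsymbol{\psi}$; strictly, the conclusion should be read as $\min_{|c|=1}\|\boldsymbol{\psi}(\e) - c\boldsymbol{\psi}\|_\infty \to 0$, or the representatives must be chosen with compatible signs, exactly as you say.
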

\begin{proof}
The matrix $\mathbf{A}$ defines a linear mapping from $\mathbb R^n$ to $\mathbb R^n$. Let $E_1 := \operatorname{ker}(\mathbf{A})$ 
and
$E_2 := \operatorname{im}(\mathbf{A})$ such that $\mathbb R^n=E_1\oplus E_2$.
Then it is easy to see that the restriction $\mathbf A_2:= \mathbf A|_{E_2}$ of $\mathbf A$ on $E_2$ a bijection from $E_2$ onto $E_2$. 
Indeed, since every $x \in \mathbb R^n$ such that $\mathbf A x = 0$ is an element of $E_1$ we
can deduce  that $\operatorname{ker}(\mathbf A_2) = {0}$ and hence $\mathbf A$ is injective. We also know that for every $x\in E_2$,
we can find $y = y_1 + y_2$, $y_1 \in E_1$, $y_2 \in E_2$ such that 
$x = \mathbf A y = \mathbf A y_2 = \mathbf A_2 y_2$ which
implies the surjectivity.

Now let $\boldsymbol \psi(\e)$ be the normalized solution to
$\mathbf A \boldsymbol \psi(\e)=\boldsymbol \rho(\e)$.
We then can write $\boldsymbol \psi(\e) = t(\e) + u(\e)$ for certain $t(\e) \in E_1$ and $u(\e)\in E_2$. Hence
$\mathbf A u(\e) =\boldsymbol \rho(\e)$.
Since $\mathbf A$ is invertible on $E_2$ and 
$(\mathbf A)^{-1}|_{E_2} = (\mathbf A_2)|_{E_2}$ 
we can write $u(\e) = (\mathbf A_2)^{-1}\boldsymbol \rho(\e)$
and therefore 
$\|\boldsymbol \psi(\e)-t(\e)\|_\infty = \|u(\e)\|_\infty \to 0$  as $\e\to 0$. 
Since $t(\e) \in E_1 = \{ a \boldsymbol \psi \colon a \in \mathbb R\}$ we can
conclude by the normalization condition on $\boldsymbol \psi(\e)$ that
$
\|\boldsymbol \psi (\e)-\boldsymbol \psi \|_\infty\to 0$
what finishes the proof.
\end{proof}

Now we are able to finish the proof of Theorem \ref{Theoremeigenvektoren}.

\medskip\noindent
\textit{Proof of Theorem \ref{Theoremeigenvektoren}:}

\noindent
For $i\leq 2\leq n$, 
let $u^{\varepsilon}_i$ be defined by $\tilde{\lambda}^{\varepsilon}_i=1-\ex^{-u^{\varepsilon}_i}$. By \eqref{lambda} it follows that 
$\lim_{\varepsilon\rightarrow 0}u^{\varepsilon}_i=0$.
We use a convenient representation of the eigenvectors of $\mathbf{I}_N-\mathbf{P}^\varepsilon$
obtained in \cite[Lemma 4.1]{Eckhoff00}. Namely, 
for any $\e>0$, applying this lemma with $I=\emptyset$, $J=\mathcal M$ we get that there
is a non-zero vector $(\psi^{\varepsilon,i}_{\mathfrak{m}_1},\ldots, \psi^{\varepsilon,i}_{\mathfrak{m}_n})$ such that the eigenvector of 
$\mathbf{I}_N-\mathbf{P}^\varepsilon$ is expressed in terms of the Laplace transforms $K^{x,\varepsilon}_{\mathfrak{m}_j,\mathcal{M}}$ as
\begin{align}\label{gleigenvector}
\psi^{\varepsilon,i}_x=\sum_{j=1}^n \psi^{\varepsilon,i}_{\mathfrak{m}_j} K^{x,\varepsilon}_{\mathfrak{m}_j,\mathcal{M}}(u^\varepsilon_i),
\quad x\in\mathcal{S}.
\end{align}  
For definiteness let us normalize $\max_{1\leq j\leq n}|\psi^{\varepsilon,i}_{\mathfrak{m}_j}|=1$.
Expanding the functions $u\mapsto K(u)$ with the help of Corollary \ref{corolaplacetaylor} we get
\begin{equation}
\begin{aligned} 
 K^{x,\varepsilon}_{\mathfrak{m}_{j},\mathcal{M}}(u^{\varepsilon}_i)
 =\begin{cases}
  1, \quad x= \mathfrak{m}_{j},\\
 \mathbb{P}_{x}\Big(\sigma^\varepsilon_{\mathfrak{m}_{j}}\leq \sigma^\varepsilon_\mathcal{M}\Big)+b_{x,\mathfrak{m}_j}(\e) 
 ,\quad x\in \Omega_j, \ 
 x\neq \mathfrak{m}_{j},\\ 
  \mathbb{P}_{x}\Big(\sigma^\varepsilon_{\mathfrak{m}_{k}}\leq \sigma^\varepsilon_\mathcal{M}\Big)+b_{x,\mathfrak{m}_k}(\e) , \quad x\in \Omega_k,\ 
  k\neq j,
  \end{cases}
\end{aligned} 
\end{equation}
 where $\max_{x,k}|b_{x,\mathfrak{m}_k}(\e)|=\mathcal O (\e^\alpha)$.
For $x\in\Omega_{j}$ we also have
\begin{align}
 \lim_{\varepsilon\rightarrow 0}\mathbb{P}_{x}\Big(\sigma^\varepsilon_{\mathfrak{m}_{j}}\leq \sigma^\varepsilon_\mathcal{M}\Big)= 1
 \quad\text{as well as}\quad\lim_{\varepsilon\rightarrow 0}
 \mathbb{P}_{x}\Big(\sigma^\varepsilon_{\mathfrak{m}_{k}}\leq \sigma^\varepsilon_\mathcal{M}\Big)= 0,
 \quad k\neq j.
\end{align}
%
Recall that $\boldsymbol{\psi}^{\varepsilon,i}$ satisfies the eigenvalue equation 
$\mathbf{A}^{\!\e}(\lambda_i^\e) \boldsymbol{\psi}^{\varepsilon,i}=0$.
From this system, let us single out $n$  equations indexed by $\mathfrak{m}_1,\ldots,\mathfrak{m}_n$:
\begin{equation}
\sum_{j=1}^n \psi^{\e,i}_{\mathfrak{m}_j} \sum_{y\in \mathcal S }(\mathbf{A}^{\!\e}(\lambda_i^\e))_{\mathfrak{m}_k,y}\cdot \Big(
\mathbb{P}_{y}\Big(\sigma^\varepsilon_{\mathfrak{m}_{j}}\leq \sigma^\varepsilon_\mathcal{M}\Big)+b_{y,\mathfrak{m}_j}(\e)\Big)
=0,\quad k=1,\ldots,n.
\end{equation}
Abbreviate $\mathbf{A}^\mathbf{Q}(\lambda):=\mathbf Q-\lambda \mathbf I_n$ and recall the following limits:
\begin{equation}
\begin{aligned} 
&\lim_{\e\to 0}\sum_{y\in \mathcal S_j }(\mathbf{A}^{\!\e}(\lambda))_{\mathfrak{m}_k,y}= (\mathbf{A}^\mathbf{Q}(\lambda))_{k,j},\ j=1,\ldots, n,\\
&\lim_{\e\to 0} \e^\alpha h\lambda_{i}^\e=\lambda_i^\mathbf{Q}.
\end{aligned} 
\end{equation}
Thus we can denote
\begin{equation}
\rho_k^i(\e)=\sum_{j=1}^n(\mathbf{A}^\mathbf{Q}(\lambda_i^\mathbf{Q}))_{k,j} \psi^{\e,i}_{\mathfrak{m}_j} ,\quad k=1,\ldots,n.
\end{equation}
where $\max_{k}|\rho^i_k(\e)|\to 0$ as $\e\to 0$. 
Recall that the eigenspace of $\lambda^\mathbf{Q}_i$ is one dimensional and apply Lemma \ref{l:eigenspace}.
That proves the assertion of the theorem.
\hfill$\Box$

%

\end{document}